\def\End{{\rm End}}
\def\dbar{\bar\partial}
\def\C{{\mathbb C}}
\def\Cn{\C^n}
\def\D{{\mathcal D}}
\def\PM{{\mathcal{PM}}}
\def\codim{{\rm codim\,}}
\def\Im{{\rm Im\, }}
\def\Ok{{\mathcal O}}
\def\Re{{\rm Re\,  }}
\def\reg{{\rm reg}}
\newcommand{\Com}[1]{}
\DeclareMathOperator{\Id}{Id}
\DeclareMathOperator{\supp}{supp}
\DeclareMathOperator{\ann}{ann}
\DeclareMathOperator{\im}{im}
\DeclareMathOperator{\Ass}{Ass}
\def\be{\begin{equation}}
\def\ee{\end{equation}}
\newtheorem{thm}{Theorem}[section]
\newtheorem{lma}[thm]{Lemma}
\newtheorem{prop}[thm]{Proposition}
\theoremstyle{definition}
\newtheorem{df}{Definition}
\theoremstyle{remark}
\newtheorem{preremark}{Remark}
\newtheorem{preex}{Example}
\newenvironment{ex}{\begin{preex}}{\end{preex}}
\numberwithin{equation}{section}
\begin{document}

\title[Residue currents on singular varieties]{Residue currents with prescribed annihilator ideals on singular varieties}

\date{\today}

\author{Richard L\"ark\"ang}

\address{R. L\"ark\"ang\\School of Mathematical Sciences, University of Adelaide, Adelaide SA 5005, Australia}

\email{richard.larkang@adelaide.edu.au}

\subjclass{32A27, 32C30}

\keywords{residue currents, local duality, singular varieties, local analytic geometry}

\begin{abstract}
    Given an ideal $\mathcal{J}$ on a complex manifold, Andersson and Wulcan
    constructed explicitly a current $R^\mathcal{J}$ such that the annihilator of $R^\mathcal{J}$ is $\mathcal{J}$,
    generalizing the duality theorem for Coleff-Herrera products.
    We describe a way to generalize this construction to ideals on singular varieties.
\end{abstract}

\maketitle

\section{Introduction}

Let $f \in \Ok$ be a germ of a holomorphic function, where $\Ok = \Ok_{\Cn,0}$
is the ring of germs of holomorphic functions at the origin in $\Cn$. Consider the problem of
finding a current $U$ such that $f U = 1$. Such currents were proven to exist
abstractly by Schwartz in \cite{Schw}. A canonical and explicit
choice of such a current, as constructed in \cite{HL}, is the \emph{principal value current}
$1/f$, which can be defined by
\begin{equation*}
    \frac{1}{f} := \lim_{\epsilon \to 0^+} \frac{\bar{f}}{|f|^2 + \epsilon},
\end{equation*}
where the limit is taken in the sense of currents. The existence of this limit over
$Z(f)$ as a current is non-trivial if $n > 1$, relying
on Hironaka's theorem on resolution of singularities. Nevertheless, $1/f$ exists as a
explicit limit of smooth functions. In addition, it is canonical in the sense
that any ``reasonable'' way of cutting off the singularities followed by
a limiting procedure will result in the same current.

Since we have defined the principal value current $1/f$, one can also give meaning to
meromorphic currents $g/f$ and residue currents $\dbar(1/f)$.
The residue current $\dbar(1/f)$ is closely related to the ideal $\mathcal{J}(f)$ generated by
$f$ in the following way:
Let $\ann_{\Ok} \dbar(1/f)$ be the \emph{annihilator} of $\dbar(1/f)$, i.e., the ideal of
holomorphic functions $g$ such that $g \dbar(1/f) = 0$. Then $g \in \ann_\Ok \dbar(1/f) = 0$
if and only if $\dbar(g/f) = 0$ and, by regularity of the $\dbar$-operator on
$(0,0)$-currents, this holds if and only if $g/f \in \Ok$, i.e.,
$g \in \mathcal{J}(f)$. Hence, $\ann_\Ok \dbar(1/f) = \mathcal{J}(f)$.

Let $f = (f_1,\dots,f_p) \in \Ok^{\oplus p}$ be a tuple of holomorphic functions.
In \cite{CH}, Coleff and Herrera showed that one can give a meaning to products
\begin{equation*}
    \dbar \frac{1}{f_p}\wedge \dots \wedge \dbar\frac{1}{f_1},
\end{equation*}
what is nowadays called the \emph{Coleff-Herrera product} of $f$, and which we will
also denote by $\mu^f$. 

Such products are ``nicely'' behaved if $f$ defines a \emph{complete intersection},
i.e., if $\codim Z(f) = p$. Maybe the most important property is the following
\emph{duality theorem} for Coleff-Herrera products.
\begin{thm}
    Let $f = (f_1,\dots,f_p)$ be a holomorphic mapping on a complex manifold defining a complete intersection.
    Then locally,
    \begin{equation*}
        \ann \mu^f = \mathcal{J}(f_1,\dots,f_p).
    \end{equation*}
\end{thm}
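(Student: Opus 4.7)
I would establish the two inclusions separately.

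For $\mathcal{J}(f_1,\dots,f_p) \subseteq \ann \mu^f$, since the annihilator is an ideal it is enough to show $f_i \mu^f = 0$ for each $i$. The key input is that the principal value current satisfies $f_i \cdot (1/f_i) = 1$, so applying $\dbar$ gives $f_i \dbar(1/f_i) = 0$. By the standard commutation properties of the Coleff-Herrera product, a holomorphic factor can be pulled past the wedge factors $\dbar(1/f_j)$ for $j \neq i$, producing the vanishing factor $f_i \dbar(1/f_i)$ inside $f_i \mu^f$.

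For the nontrivial inclusion $\ann \mu^f \subseteq \mathcal{J}(f_1,\dots,f_p)$, my approach would rest on Hironaka's theorem on resolution of singularities. I would choose a proper modification $\pi : \tilde X \to X$ such that in local coordinates around each point of $\pi^{-1}(Z(f))$ each $\pi^* f_i$ is a monomial times a unit and $\pi^{-1}(Z(f))$ has normal crossings. The pulled-back current $\pi^* \mu^f$ is then a residue of an explicit monomial form, and its annihilator is, by direct inspection, the monomial ideal generated locally by the $\pi^* f_i$: a holomorphic function annihilates a product $\dbar(1/z_1^{a_1}) \wedge \dots \wedge \dbar(1/z_p^{a_p})$ if and only if each monomial in its Taylor expansion is divisible by some $z_k^{a_k}$. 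Given $g \in \ann \mu^f$, one deduces that $\pi^* g$ lies locally in this monomial ideal on $\tilde X$, and the final step is to descend this membership to $g \in \mathcal{J}(f)$ on $X$.

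The main obstacle lies precisely in this descent, and it is here that the complete intersection hypothesis is essential. Local membership upstairs does not automatically yield membership downstairs; one needs an additional input such as a current-theoretic division formula, a Brian\c{c}on-Skoda type argument, or the fact that $(f_1,\dots,f_p)$ forms a regular sequence so that the Koszul complex on $f$ is acyclic and higher Tor groups vanish. A conceptually cleaner alternative I would also consider is the Dickenstein-Sessa approach, which bypasses resolution entirely: characterize $\mu^f$ as the unique (up to scalar) generator of the cyclic sheaf of $(0,p)$-currents that are $\dbar$-closed, supported on $Z(f)$, annihilated by $(f_1,\dots,f_p)$, and that satisfy the standard extension property, and then deduce duality from local duality in commutative algebra.
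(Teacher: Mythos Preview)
The paper does not prove this theorem; it is quoted as background and attributed to Dickenstein--Sessa and Passare. So there is no in-paper proof to compare against, and I will simply assess your sketch on its own merits.

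The easy inclusion is fine. In the hard inclusion your resolution approach has a real gap. After a log resolution the pullbacks $\pi^* f_i = u_i \prod_j z_j^{a_{ij}}$ are monomials times units, but they \emph{share} the coordinates $z_j$ coming from the exceptional divisors; in particular $\pi^* f$ is typically no longer a complete intersection upstairs. Consequently the current one obtains on $\tilde X$ is not the clean tensor product $\dbar(1/z_1^{a_1})\wedge\dots\wedge\dbar(1/z_p^{a_p})$ with separated variables, and the ``direct inspection'' of its annihilator you invoke does not apply. One must instead analyze the regularized current $\mu^f_\epsilon$ pulled back along $\pi$ and pass to the limit chart by chart; the resulting local currents have a more intricate structure (mixing principal values and residues in the shared variables), and identifying their annihilators already requires nontrivial work. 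The descent step you flag as the main obstacle is indeed hard, but the difficulty begins earlier than you indicate.

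Your alternative, the Dickenstein--Sessa route via the characterization of $\mu^f$ inside local moderate cohomology and local duality, is essentially one of the two original proofs and is the cleaner path if you want to avoid the monomial analysis above. If you prefer the analytic side, Passare's argument is the model to follow, but be aware that it does not reduce to the one-line monomial computation in your sketch.
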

This result thus extends the description of the annihilator for one single holomorphic
function described above. It was proven independently by Dickenstein
and Sessa in \cite{DS} and Passare in \cite{PMScand}.

Another way in which the Coleff-Herrera product is nicely behaved in the case of complete intersection
is the following. Let $f = (f_1,\dots,f_p)$ and $g = (g_1,\dots,g_p)$ be two tuples of holomorphic
functions defining complete intersections. If there exists a matrix $A$ of holomorphic functions such
that $f = g A$, then the \emph{transformation law} for Coleff-Herrera products states that
$\mu^g = (\det A) \mu^f$. In particular, if $f$ and $g$ define the same ideal, then
$A$ is invertible, so $\det A$ is a non-vanishing holomorphic function. Thus, we can view the
Coleff-Herrera product as an essentially canonical current associated to a complete intersection ideal.

Coleff-Herrera products have had various applications, for example to explicit versions of the
Ehrenpreis-Palamodov fundamental principle by Berndtsson and Passare, \cite{BePa}, the $\dbar$-equation
on singular varieties by Henkin and Polyakov, \cite{HePo}, and effectivity questions in division
problems by Berenstein and Yger, \cite{BeYg}.

In \cite{AW1}, Andersson and Wulcan generalized the construction of the Coleff-Herrera product
from complete intersection ideals to arbitrary ideals. From a Hermitian resolution $(E,\varphi)$
(i.e., a locally free resolution equipped with Hermitian metrics)
of an ideal $\mathcal{J}$, they constructed explicitly a vector-valued current $R^\mathcal{J}$
with values in $E$ such that $\ann_\Ok R^\mathcal{J} = \mathcal{J}$. In case
$\mathcal{J} = \mathcal{J}(f_1,\dots,f_p)$ is a complete intersection ideal,
the current they constructed coincides with the Coleff-Herrera product of $f$.

In case the ideal is Cohen-Macaulay, i.e., if $\Ok/\mathcal{J}$ has a free resolution of length equal
to $\codim Z(\mathcal{J})$, the current $R^\mathcal{J}$ is essentially canonically associated to
$\mathcal{J}$, in the sense that it does not depend on the Hermitian metrics chosen, and choosing
different minimal free resolutions only changes the current by an invertible holomorphic matrix
(just like the Coleff-Herrera product changes by an invertible holomorphic function by changing
the generators).
In addition, the construction ``globalizes'' in the same way as free resolutions in the sense that
if we construct the current $R^\mathcal{J}$ globally, and restrict it to a neighbourhood
of a point $z$, we can express $R^\mathcal{J}$ there as a smooth matrix times the current constructed
locally around $z$ (just as considering a global (locally) free resolution will in general not
restrict to a minimal free resolution locally, but only that the local minimal free resolution
is a direct summand of the restriction of the global one).

The construction is explicit both in the sense that it is explicitly described
in terms of a free resolution of the ideal, and also in the sense that it not
only describes ideal membership in terms of its annihilator, but also explicitly
realizes this ideal membership, by appearing in integral representation formulas,
see \cite{AW1}, Section~5.

The applications described for Coleff-Herrera products have been generalized in
various ways to Andersson-Wulcan currents, thereby being able to remove assumptions
about complete intersection, see for example \cites{AS2,AS3,ASS,AW1,AW3,Sz}.

The aim of this article, is to generalize the construction in \cite{AW1}, to currents with
prescribed annihilator ideals on singular varieties. Describing this construction more precisely,
and how the construction generalizes the one of Andersson and Wulcan requires more knowledge
about their construction, which we leave for later parts of the article,
see in particular Theorem~\ref{thmmain1} and Theorem~\ref{thmmain2}.
In the introduction, we instead describe a special case where many of the
technicalities of the construction disappears, while it still illustrates
much of the ideas behind the construction.

\subsection{Principal ideals on hypersurfaces} \label{ssectpihyper}

Let $Z \subseteq \Omega$ be a \emph{reduced hypersurface} of an open set $\Omega \subseteq \Cn$,
i.e., $Z = Z(h)$, where $h$ is a holomorphic function on $\Omega$ such that $dh$ is
non-vanishing generically on $Z$.
In particular, $\Ok_Z = \Ok/\mathcal{J}(h)$.

One of the simplest examples of an ideal in $\Ok_Z$ would be a principal ideal
$\mathcal{J} = \mathcal{J}(f) \subseteq \Ok_Z$, where we
also assume that $f$ is a non-zero-divisor in $\Ok_Z$, i.e., $f$ does
not vanish identically on any irreducible component of $Z$.
We then want to find an intrinsic current $R$ on $Z$ such that
$\ann_{\Ok_Z} R = \mathcal{J}$. Currents on analytic varieties can either
be defined in a similar manner as on manifolds, or in terms of currents
in the embedding, see Section~\ref{ssectcurrvar}.
Of particular importance here will be that the construction of principal-value
currents works just as well on singular varieties.
Since the residue current $\dbar (1/f)$ of $f$ exists on $Z$,
it would be a natural candidate for the current $R$.
However, in \cite{Lar2}, we show that if
$\codim Z_{\rm sing} = 1$ (as would be the case for example for any singular
planar curve), then one can always find a holomorphic function $f$
such that $\ann_{\Ok_Z} \dbar(1/f) \neq \mathcal{J}(f)$.

We instead start by considering currents in the ambient space. Let
$\tilde{f}$ be a representative of $f$ in the ambient space $\Omega$.
The current
\begin{equation*}
    T := \dbar\frac{1}{\tilde{f}}\wedge\dbar\frac{1}{h}\wedge dz,
\end{equation*}
where $dz = dz_1\wedge\dots\wedge dz_n$, has the same annihilator as
$\dbar(1/\tilde{f})\wedge\dbar(1/h)$, i.e., $\mathcal{J}(\tilde{f},h)$
by the duality theorem.
Since the annihilator contains $h$, we get a well-defined multiplication with elements of
$\Ok_Z = \Ok/\mathcal{J}(h)$, and the annihilator of $T$ over $\Ok_Z$ equals $\mathcal{J}(f)$.
Thus, we have found a current in the ambient space with the correct annihilator, and then
if we can find a current $R$ on $Z$ such that $i_* R = T$, where $i : Z \to \Omega$ is
the inclusion, then $R$ will be a current with the correct annihilator.

We consider the current $(1/f)\omega$ on $Z$, where $\omega$ is the Poincar\'e residue of $dz/h$,
see Example~\ref{expresidue} below.
One way of characterizing the Poincar\'e residue $\omega$ is that $i_* \omega = \dbar(1/h)\wedge dz$, so
\begin{equation*}
    i_*\left(\frac{1}{f}\omega\right) = \frac{1}{\tilde{f}}\dbar\frac{1}{h}\wedge dz.
\end{equation*}
Thus, by Leibniz' rule, see \eqref{eqprodprops},
\begin{equation*}
    i_*\left(\dbar\left(\frac{1}{f}\omega\right)\right)
    = \dbar\left(\frac{1}{\tilde{f}}\dbar\frac{1}{h}\wedge dz\right)
    = \dbar\frac{1}{\tilde{f}}\wedge\dbar\frac{1}{h}\wedge dz = T,
\end{equation*}
and we have proved the following.
\begin{prop} \label{propprincipalonhyper}
    Let $Z$ be a reduced hypersurface defined by a holomorphic function $h$,
    and let $\omega$ be the Poincar\'e residue of $dz/h$ on $Z$.
    If $f \in \Ok_Z$ is a non-zero-divisor and $R^f_Z$ is the current $\dbar( (1/f)\omega)$
    on $Z$, then
    \begin{equation*}
        \ann_{\Ok_Z} R^f_Z = \mathcal{J}(f).
    \end{equation*}
\end{prop}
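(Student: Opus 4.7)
The proof is essentially laid out in the discussion preceding the statement; the plan is to organize those observations into a clean chain of equivalences by transferring the annihilator computation from $Z$ to the ambient space $\Omega$ via the inclusion $i \colon Z \hookrightarrow \Omega$. The first step I would formalize is that $i_*$ is injective on currents on $Z$ (a standard property of the intrinsic currents on singular varieties reviewed in Section~\ref{ssectcurrvar}), and that it is $\Ok_Z$-linear in the sense that $i_*(g \alpha) = \tilde{g} \cdot i_* \alpha$ for any lift $\tilde{g} \in \Ok$ of $g \in \Ok_Z$. Consequently, $g \in \ann_{\Ok_Z} R^f_Z$ if and only if $\tilde{g} \cdot i_* R^f_Z = 0$ in $\Omega$.

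Next I would invoke the identity $i_* R^f_Z = T$ that is already derived in the paragraph preceding the proposition, using only the defining property $i_* \omega = \dbar(1/h) \wedge dz$ of the Poincar\'e residue together with the Leibniz rule \eqref{eqprodprops}. To apply the duality theorem of Dickenstein--Sessa and Passare to $T$, I have to verify that $(\tilde{f}, h)$ defines a complete intersection in $\Omega$. This is where the assumption that $f$ is a non-zero-divisor in $\Ok_Z$ enters: it means $\tilde{f}$ vanishes identically on no irreducible component of $Z = Z(h)$, so $\codim Z(\tilde{f}, h) = 2$. The duality theorem then yields $\ann_\Ok T = \mathcal{J}(\tilde{f}, h)$.

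Combining these steps, $g \in \ann_{\Ok_Z} R^f_Z$ if and only if $\tilde{g} \in \mathcal{J}(\tilde{f}, h)$, i.e., $\tilde{g} = a \tilde{f} + b h$ for some $a, b \in \Ok$. Reducing modulo $\mathcal{J}(h)$ gives $g = a f$ in $\Ok_Z$, which is exactly $g \in \mathcal{J}(f)$ as an ideal of $\Ok_Z$. Conversely, every such $g$ annihilates $R^f_Z$ since its lift annihilates $T$. The only genuine work beyond bookkeeping is the verification of the two inputs already identified, namely the complete intersection property for $(\tilde{f}, h)$ and the pushforward identity $i_* R^f_Z = T$; the former is a short codimension count, and the latter is the key computation that motivates the construction and is carried out in the excerpt. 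The main conceptual point to highlight is that the whole argument works because $h$ is forced into the ambient annihilator $\mathcal{J}(\tilde{f}, h)$, which is what allows the ambient duality theorem to descend cleanly to $\Ok_Z$.
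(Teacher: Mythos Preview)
Your proposal is correct and follows essentially the same approach as the paper: the proof in the paper is precisely the discussion preceding the proposition, and you have faithfully reorganized it into a clean chain of equivalences, making explicit the injectivity and $\Ok_Z$-linearity of $i_*$ and the role of the non-zero-divisor hypothesis in ensuring $(\tilde f,h)$ is a complete intersection.
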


Note that, since $\dbar \omega = 0$, we have formally that $R^f_Z = \dbar(1/f)\wedge\omega$.
However, it might very well happen that $\omega$ has its poles (which are contained in $Z_{\rm sing}$)
on $Z(f) = \supp \dbar(1/f)$.
In that case, the product $\dbar(1/f)\wedge \omega$ can not be defined in a
``robust'' way. For example, it is natural to regularize the factors one at a time, and in
that case, the product will in general depend on in which order one regularizes,
so we refrain from giving such products any meaning.
However, in case $\codim Z_{\rm sing}\cap Z(f) \geq 2$ in $Z$, then
$\dbar(1/f)\wedge \omega$ can be defined in a ``robust'' way, and it coincides
with $R^f_Z$.

If we let $U = (1/f)$, then by Leibniz' rule,
and by a natural cancellation property for residue currents, see \eqref{eqprodprops}, we get that
\begin{equation} \label{eqrfhomotopy}
    R^f_Z = \omega - \nabla(U\omega),
\end{equation}
where $\nabla = f - \dbar$, and in addition,
\begin{equation} \label{eqrfpushforward}
    i_* R^f_Z = \dbar \frac{1}{\tilde{f}} \wedge \dbar\frac{1}{h}\wedge dz.
\end{equation}
In this article, we generalize this construction to arbitrary ideals on arbitrary varieties.
The starting point of generalizing this construction is to replace the right-hand side of
\eqref{eqrfpushforward} with the Andersson-Wulcan current $R^\mathcal{\tilde{J}}$ associated to a
maximal lifting $\mathcal{\tilde{J}}$ of the ideal $\mathcal{J}$, which will give a current in the
ambient space with the correct annihilator. In Section~\ref{sectprel}, we describe
the construction of residue currents from \cite{AW1} and other necessary background on residue currents.
In order to prove that this current corresponds to a current on $Z$, we show that $R^{\mathcal{\tilde{J}}}\wedge dz$
is the push-forward of a current on $Z$ of a similar form as the right-hand side of \eqref{eqrfhomotopy}.
We treat the case when $Z$ is of pure dimension in Section~\ref{sectpure}. The main ingredients are
a comparison formula for Andersson-Wulcan currents from \cite{Lar3}, relating such currents
associated to two different ideals, and a generalization of the Poincar\'e residue to arbitrary varieties
of pure dimension, as introduced in \cite{AS2}, called the structure form associated to $Z$.
In Section~\ref{sectsmooth}, we describe how this construction coincides with the construction
in \cite{AW1} in case $Z$ is non-singular. In Section~\ref{sectnonpure}, we prove the
general case of our construction, i.e., when $Z$ is not necessarily of pure dimension. A key part
is to prove the existence of a structure form also associated to such varieties.
We finish in Section~\ref{sectnaive} by discussing why a more straightforward generalization of
the construction in \cite{AW1}, by considering free resolutions on the variety itself,
does not work in general.

\section*{Acknowledgements}

I would like to thank Mats Andersson for valuable
discussions in the preparation of this article.

\section{Preliminaries} \label{sectprel}

In this section we recall several tools which will be useful during
the rest of the article, like currents on singular varieties, almost semi-meromorphic
and pseudomeromorphic currents, the construction of Andersson-Wulcan of currents
with prescribed annihilator ideals and a comparison formula for such currents.

\subsection{Currents on analytic varieties} \label{ssectcurrvar}

Since a key part in this article is that we construct intrinsic currents
on the varieties, we begin by recalling what currents on analytic varieties are.
The usual way to define currents on an analytic variety is to first define test forms on
analytic varieties, and then define currents as continuous linear functionals on the
test forms. However, it can also be described more concretely in terms of embeddings.
If $Z$ is a subvariety of pure codimension $k$ of some complex manifold $X$,
and $i$ is the inclusion $i : Z \to X$, then $T$ is a $(p,q)$-current on $Z$ if
$i_* T$ is a $(p+k,q+k)$-current on $X$ which vanishes when acting on
test forms $\phi$ on $X$ such that $\phi|_{Z_\reg} = 0$.
Conversely, if $T'$ is any such current on $X$, then $T'$ defines a unique current
$T$ on $Z$ such that $i_* T = T'$.
Note that considered as a current in the ambient space, it is not sufficient
that $\supp T \subseteq Z$ for it to correspond to a current on $Z$.
For example, if $Z = \{ 0 \} \subseteq \C$, then $[0]$, the integration current at $\{0\}$,
corresponds to a current on $Z$, while $\partial/\partial_z [0]$ does not, although both
have support on $Z$.

\begin{ex}
    The most basic example of a current on a singular variety is given by
    the integration current constructed by Lelong, \cite{Lel}.
    Given a subvariety $Z$ of a complex manifold $X$, the integration current
    $[Z]$ of $Z$ on $X$ is defined by
    \begin{equation*}
        [Z].\phi := \int_{Z_{\rm reg}} \phi,
    \end{equation*}
    where $\phi$ is a test form. It is thus immediate from the description
    above, that $[Z]$ corresponds to a current on $Z$, and it is reasonable
    to denote it by $1$, i.e., $i_* 1 = [Z]$.
\end{ex}

Multiplying the equation $i_* 1 = [Z]$ by a smooth form,
any smooth $(p,q)$-form on $Z$ can be considered as a current on $Z$,
and in fact, the construction of Herrera and Lieberman of principal value and
residue currents works also on a singular variety, so for any meromorphic
$(p,q)$-form $\eta$ on $Z$, we can define its corresponding meromorphic current,
which we for simplicity will also denote by $\eta$.

By a \emph{holomorphic form} on a singular variety $Z$, we mean the restriction of
a holomorphic form in the ambient space, and by a \emph{meromorphic form},
we mean the restriction of a meromorphic form in the ambient space
such that its polar set has positive codimension in $Z$. See
\cite{HP} for a rather detailed discussion about different definitions of meromorphic
forms, and various definitions of holomorphic forms.
In order to distinguish between a meromorphic form $\eta$ on $Z$,
and a representative of it in the ambient space, we will denote the
representative by $\tilde{\eta}$. In particular, we write $i_* \eta = \tilde{\eta} \wedge [Z]$.

In case we have two holomorphic functions $f$ and $g$ on $Z$ such that
$\codim Z(f)\cap Z(g) = 2$, then we can form products of residue currents
and principal value currents of $f$ and $g$ satisfying the following natural
properties.
\begin{equation} \label{eqprodprops}
    f \frac{1}{f}\dbar \frac{1}{g} = \dbar\frac{1}{g} \text{,\quad }
    g \frac{1}{f}\dbar \frac{1}{g} = 0 \text{ and }
    \dbar\left(\frac{1}{f}\dbar\frac{1}{g}\right) = \dbar\frac{1}{f}\wedge \dbar\frac{1}{g}.
\end{equation}

\begin{ex} \label{expresidue}
    Let $Z \subseteq \Omega \subseteq \Cn$ be a reduced hypersurface defined by a
    holomorphic function $h$.
    On such a hypersurface, the \emph{Poincar\'e residue} $\omega$ of $dz/h$ is a
    meromorphic form, which can be defined by
    \begin{equation} \label{eqpresidue}
        i_* \omega = \dbar \frac{1}{h} \wedge dz.
    \end{equation}
    If we let $\tilde{\omega}$ be a meromorphic form on
    $\Omega$ such that $(dh/2\pi i) \wedge \tilde{\omega} = dz_1\wedge\dots\wedge dz_n =: dz$,
    then $\omega$ can alternatively be defined by $\omega := \tilde{\omega}|_Z$.
    This definition of $\omega$ does not depend on the choice of $\tilde{\omega}$. Considered as
    a meromorphic current, $\omega$ is $\dbar$-closed, see \cite{HP}.
    If $\partial h /\partial z_n$ does not vanish identically on any irreducible component of $Z$,
    then one can take $\omega = (-1)^{n-1}/(2\pi i\partial h/\partial z_n) dz_1\wedge \dots \wedge dz_{n-1}|_{Z}$.
    The Poincar\'e residue is a classical construction in mathematics, see for example \cite{Yg}.
    In this form it appears for example in \cite{HP}, and in similar forms in for
    example \cite{Barlet} and \cite{Herr}.
\end{ex}

\subsection{Almost semi-meromorphic and pseudomeromorphic currents} \label{ssectasmpm}

In $\C_z$ the principal value current $1/z^m$ can be defined as the analytic continuation
$|z^m|^{2\lambda}/z^m|_{\lambda=0}$, where by $|_{\lambda = 0}$ we mean that it is a
current-valued analytic function for $\Re \lambda \gg 0$, and $|_{\lambda = 0}$ denotes the
analytic continuation to $\lambda = 0$.
We can thus also define $\dbar(1/z^m)$ in the sense of currents,
which thus equals $\dbar |z^m|^{2\lambda}/z^m|_{\lambda=0}$. Hence, we can consider
tensor products of such one variable currents 
\begin{equation*}
    \tau = \dbar \frac{1}{z_1^{m_1}}\wedge\dots\wedge \dbar \frac{1}{z_k^{m_k}}
    \frac{\alpha}{z_{k+1}^{m_{k+1}}\dots z_N^{m_N}},
\end{equation*}
on $\C^N$, where $m_1,\dots,m_N$ are non-negative integers and $\alpha$ is a smooth
form with compact support. We call such a current an \emph{elementary current}.
Andersson and Wulcan introduced the following class of currents in \cite{AW2}.
\begin{df} \label{defpm}
    Let $Z$ be an analytic variety. A current $\mu$ on $Z$ is \emph{pseudomeromorphic},
    denoted $\mu \in \PM(Z)$ if it can be written as a locally finite sum of
    push-forwards $\pi_* \tau$ of elementary currents, where $\pi$ is a composition
    of modifications and open inclusions.
\end{df}
The definition in \cite{AW2} was for $Z$ a complex manifold, but allowing
$Z$ to be singular makes no difference. In \cite{AS2}, a slightly wider
definition was used, allowing more general push-forwards, but Definition~\ref{defpm} will
be sufficient for our purposes.

For pseudomeromorphic currents one can define natural restrictions to analytic subvarieties.
If $T \in \PM(Z)$, $V \subseteq Z$ is a subvariety of $Z$, and $h$ is a tuple of
holomorphic functions such that $V = Z(h)$, one defines
\begin{equation*}
    {\bf 1}_{Z\setminus V} T := |h|^{2\lambda} T |_{\lambda=0} \text{ and }
    {\bf 1}_V T := T - {\bf 1}_{Z\setminus V} T.
\end{equation*}
This definition is independent of the choice of tuple $h$, and ${\bf 1}_V T$
is a pseudomeromorphic current with support on $V$, see \cite{AW2}, Proposition~2.2.

A pseudomeromorphic current $\mu \in \PM(Z)$ is said to have the
\emph{standard extension property}, SEP, if ${\bf 1}_V \mu = 0$
for any subvariety $V \subseteq Z$ of positive codimension.
If $Z$ does not have pure dimension, we mean that $V$ has positive
codimension on each irreducible component of $Z$.

If $\alpha$ is a smooth form, and $T$ is a pseudomeromorphic current,
then ${\bf 1}_V(\alpha\wedge T) = \alpha \wedge{\bf 1}_V T$, and in particular,
if $T$ has the SEP, then $\alpha \wedge T$ also has the SEP.

An important property of pseudomeromorphic currents is that they satisfy
the following \emph{dimension principle}, Corollary~2.4 in \cite{AW2}.
\begin{prop} \label{proppmdim}
    If $T \in \PM(Z)$ is a $(p,q)$-current with support on a variety $V$,
    and $\codim V > q$, then $T = 0$.
\end{prop}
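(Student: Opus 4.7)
The plan is to reduce to the case of elementary currents and then carry out a direct local computation. Since $T$ is pseudomeromorphic, near any point of $Z$ it is a finite sum $\sum_i \pi_{i*}\tau_i$ of push-forwards of elementary currents, so by linearity it suffices to handle one summand $T = \pi_*\tau$. The hypothesis $\supp T \subseteq V$ gives $T = {\bf 1}_V T$, and since ${\bf 1}_V$ commutes with proper push-forward of pseudomeromorphic currents, I may replace $\tau$ by $\sigma := {\bf 1}_{\pi^{-1}(V)}\tau$.

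Next, by applying Hironaka's theorem to the ideal defining $\pi^{-1}(V)$, refine $\pi$ so that $\pi^{-1}(V)$ becomes a normal crossings divisor in suitable local coordinates. In these coordinates, $\sigma$ decomposes by inclusion--exclusion into restrictions of the form ${\bf 1}_{\{z_j=0\}}$ applied to the elementary current $\tau$, and the key computations are
\[
    {\bf 1}_{\{z_j=0\}} \dbar\frac{1}{z_j^{m_j}} = \dbar\frac{1}{z_j^{m_j}}, \qquad {\bf 1}_{\{z_j=0\}} \frac{1}{z_j^{m_j}} = 0,
\]
the latter because $|z_j|^{2\lambda}/z_j^{m_j}$ is analytic at $\lambda = 0$ (and ${\bf 1}_{\{z_j=0\}}$ acts trivially on factors in other variables). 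Consequently, each non-zero contribution to $\sigma$ is again elementary, with its residue factors corresponding exactly to the coordinates $z_j$ cutting out $\pi^{-1}(V)$ locally, and is supported on the corresponding coordinate subspace.

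For such an elementary current with $k$ residue factors, the antiholomorphic bidegree is at least $k$ and the support lies in a coordinate subspace $W$ of codimension $k$. Since $T$ has bidegree $(p,q)$ and bidegree is preserved under push-forward of modifications, we have $q \geq k$. The hypothesis $\codim V > q \geq k$ together with $\supp \sigma \subseteq \pi^{-1}(V)$ then forces $\sigma$ to be supported in a \emph{proper} subvariety of $W$. A Taylor-expansion analysis of $\tau.\phi$ against test forms $\phi$ (where the pairing reduces to integration of the smooth factor $\alpha$, multiplied by finitely many derivatives of $\phi$, along $W$) then forces $\alpha$ to vanish on $W$, so $\sigma = 0$ and hence $T = 0$.

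The main obstacle is controlling the codimension of $\pi^{-1}(V)$ through the modifications, since these can introduce exceptional divisors of codimension one, so one cannot directly conclude $\codim \pi^{-1}(V) > q$. This is handled by distinguishing the strict transform of $V$ (on which codimension is preserved, since $\pi$ is a biholomorphism on the complement of the exceptional locus) from the exceptional components of $\pi^{-1}(V)$, the latter being addressed by further blowups and by exploiting that the residue indices of $\tau$ are intrinsic after such refinement.
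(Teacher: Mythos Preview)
The paper does not prove this proposition; it is cited as Corollary~2.4 in \cite{AW2}. The paper does, however, sketch the standard argument when reformulating the result for non-pure varieties (Proposition~\ref{proppmdim2}): one first proves that $\bar h T=0$ and $d\bar h\wedge T=0$ for every holomorphic $h$ vanishing on $\supp T$ (this does reduce to elementary currents and is a short computation), and then, working on $V_{\rm reg}$ with local equations $z_1,\dots,z_c$ for $V$, these identities force $d\bar z_1\wedge\dots\wedge d\bar z_c$ to divide $T$, which is impossible for a $(*,q)$-current with $q<c$.

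Your route is different, and the difficulty you flag in the last paragraph is a genuine gap rather than a technicality. Two points. First, ``refine $\pi$ so that $\pi^{-1}(V)$ becomes normal crossings'' is not innocent: once you compose with a further modification $\rho$, the current you must analyse upstairs is no longer the elementary $\tau$ (the residue factors $\dbar(1/z_j^{m_j})$ do not pull back to elementary form), so your coordinate-by-coordinate computation of ${\bf 1}_{\{z_j=0\}}$ no longer applies as stated. Second, even granting an elementary $\sigma$ with $k\le q$ residue factors supported on a codimension-$k$ coordinate plane $W$, the step ``$\codim V>q\ge k$ together with $\supp\sigma\subseteq\pi^{-1}(V)$ forces $\supp\sigma$ into a proper subvariety of $W$'' is exactly what fails over the exceptional locus: $\pi^{-1}(V)$ may well contain all of $W$ when $W$ lies in an exceptional divisor. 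Your proposed fix (``the residue indices of $\tau$ are intrinsic after such refinement'') does not explain why the push-forward of such a $\sigma$ vanishes. This is precisely where the $\bar h T=0$, $d\bar h\wedge T=0$ approach earns its keep: those identities are proved once for elementary currents, survive push-forward, and can then be applied directly on $V_{\rm reg}$ downstairs, so no control of $\codim\pi^{-1}(V)$ is ever needed.
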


Given $f$ holomorphic on an analytic variety $Z$, as described in the introduction,
Herrera and Lieberman defined the principal value current $1/f$ on $Z$.
One way to define this is by
\begin{equation*}
    \frac{1}{f} . \phi := \int_{Z_{\rm reg}} \left.\frac{|f|^{2\lambda}}{f} \phi\right|_{\lambda=0},
\end{equation*}
where by $|_{\lambda = 0}$, we mean that right-hand side for $\Re \lambda \gg 0$ is analytic in
$\lambda$, and $|_{\lambda = 0}$ denotes the analytic continuation to $\lambda = 0$.
This way of defining the principal value current by analytic continuation goes
back to Atiyah, \cite{Atiyah}, and Bernstein-Gel'fand, \cite{BeGe}.
The proof of the existence of this analytic continuation relies on Hironaka's theorem
of resolution of singularities in order to write it as a locally finite sum of push-forwards
of elementary currents, and hence, principal value currents are pseudomeromorphic.

The product of a principal value current and a smooth form (i.e., the restriction
of a smooth form in the ambient space) is called a \emph{semi-meromorphic current}.
In \cite{AS2}, the authors introduce a generalization of this called \emph{almost
semi-meromorphic} currents.
\begin{df}
    A current $\mu$ on an analytic variety $Z$ is said to be \emph{almost semi-meromorphic}
    if $\mu = \pi_* \tilde{\mu}$, where $\tilde{\mu}$ is semi-meromorphic and
    $\pi : \tilde{Z} \to Z$ is a smooth modification of $Z$.
\end{df}
Since the class of pseudomeromorphic currents is closed under multiplication with smooth
functions and under push-forwards under modifications, almost semi-meromorphic currents
are pseudomeromorphic. By the dimension principle, principal value currents have the SEP,
and thus any semi-meromorphic current will also have the SEP.

\begin{df}
 The sheaf $\mathcal{W}_Z$ is the subsheaf of $\PM_Z$ of pseudomeromorphic currents
on $Z$ with the SEP on $Z$.    
\end{df}
In particular, almost semi-meromorphic currents are in $\mathcal{W}_Z$.
The fact that $\mathcal{W}_Z$ allows a natural multiplication with semi-meromorphic
currents will be crucial for the description of the currents we construct,
Proposition~2.7 in \cite{AS2}.

\begin{prop} \label{propprodasm}
    Let $\alpha$ be an almost semi-meromorphic current on $Z$. If $\mu \in \mathcal{W}(Z)$,
    then the current $\alpha\wedge \mu$, a priori defined where $\alpha$ is smooth,
    has a unique extension as a current in $\mathcal{W}(Z)$, which we also denote
    by $\alpha\wedge\mu$.
\end{prop}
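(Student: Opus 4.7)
The plan is to construct the extension by analytic continuation, echoing the definition of principal value (and thus of almost semi-meromorphic) currents themselves. Let $V \subseteq Z$ be the smallest closed analytic set outside of which $\alpha$ is smooth; writing $\alpha = \pi_*\tilde\alpha$ with $\tilde\alpha$ semi-meromorphic on a modification $\pi\colon \tilde Z \to Z$, the set $V$ is the image of the polar divisor of $\tilde\alpha$, so it has positive codimension on every irreducible component of $Z$. Choose a tuple $s$ of holomorphic functions on $Z$ with $V = Z(s)$. For $\Re\lambda \gg 0$ the factor $|s|^{2\lambda}$ kills the polar singularities of $\alpha$, so $|s|^{2\lambda}\alpha$ is smooth and $|s|^{2\lambda}\alpha \wedge \mu$ is an honest smooth-times-current product, holomorphic in $\lambda$. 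I would define
\[
\alpha\wedge\mu := |s|^{2\lambda}\alpha\wedge\mu\big|_{\lambda=0},
\]
provided the right-hand side admits an analytic continuation to $\lambda = 0$.

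For the existence of the continuation, the pseudomeromorphic structure of $\mu$ together with Hironaka's theorem does the work. Locally $\mu = \sum p_{j*}\tau_j$ for elementary currents $\tau_j$ by Definition~\ref{defpm}; lifting $\pi$ and the $p_j$ to a common log resolution on which $s$, the polar divisor of $\tilde\alpha$, and the coordinate divisors of each $\tau_j$ become simultaneously normal crossings, the whole product reduces upstairs to a finite sum of elementary currents multiplied by monomial factors $|z^a|^{2\lambda}$. The analytic continuation of each such term to $\lambda = 0$ is the classical one-variable calculation underlying the construction of principal value and residue currents, and pushing back down preserves pseudomeromorphicity, so the result lies in $\PM(Z)$.

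It then remains to verify the SEP, independence of $s$, and uniqueness. For a subvariety $W = Z(t)$ of positive codimension in $Z$, the SEP amounts to the equality
\[
|t|^{2\nu}|s|^{2\lambda}\alpha\wedge\mu\big|_{\lambda=0,\,\nu=0} = |s|^{2\lambda}\alpha\wedge\mu\big|_{\lambda=0};
\]
on the common resolution both sides become elementary currents times monomial factors $|z^a|^{2\lambda}|z^b|^{2\nu}$, so the two iterated limits commute, and evaluating in the reverse order first gives $|s|^{2\lambda}\alpha \wedge {\bf 1}_{Z\setminus W}\mu = |s|^{2\lambda}\alpha \wedge \mu$ by the SEP of $\mu$. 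The same monomialization argument yields independence of $s$: two choices $s,s'$ with $Z(s)=Z(s')=V$ pull back on the resolution to the same monomial divisor up to nonvanishing units, whose contribution is $1$ at $\lambda=0$. Uniqueness in $\mathcal{W}(Z)$ is then automatic, since the difference of two extensions is a current in $\mathcal{W}(Z)$ supported on $V$, and such a current must vanish. The principal technical obstacle throughout is the commutation of the two analytic continuations, which is precisely why the argument has to be routed through a log resolution rather than carried out directly on $Z$.
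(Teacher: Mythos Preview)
The paper does not prove this proposition; it is quoted from \cite{AS2}, Proposition~2.7, so there is no proof in the present paper to compare your attempt against.

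Your sketch follows the standard strategy and is correct in outline. Two steps deserve more care. First, the assertion that $|s|^{2\lambda}\alpha$ is smooth for $\Re\lambda \gg 0$ is not automatic when $\alpha$ is genuinely \emph{almost} semi-meromorphic rather than semi-meromorphic: if $\alpha = \pi_*\tilde\alpha$ with $\pi$ a non-trivial modification, the singularities of $\alpha$ along the center of $\pi$ are not simply poles (push-forward of a smooth form under a modification need not be smooth), and the claim that $|s|^{2\lambda}$ regularises them to $C^k$ requires an argument. One clean way around this, and essentially what is done in \cite{AS2}, is to first show that any $\mu \in \mathcal{W}(Z)$ admits a unique lift $\tilde\mu \in \mathcal{W}(\tilde Z)$ with $\pi_*\tilde\mu = \mu$, thereby reducing to the semi-meromorphic case on $\tilde Z$, where your smoothness claim is immediate. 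Second, in the ``common log resolution'' step you speak of lifting the elementary currents $\tau_j$ to a resolution dominating the $Y_j$; since currents do not pull back under modifications, this must be phrased via the projection formula --- pull back the form $|s|^{2\lambda}\alpha$ (where it is smooth) along $p_j$ rather than lift the $\tau_j$. Neither point is fatal; the architecture of your argument is sound, and the uniqueness via SEP is exactly right.
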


\subsection{Andersson-Wulcan currents} \label{ssectaw}

Here we recall the construction in \cite{AW1}
of residue currents with prescribed annihilator ideals on complex manifolds.
Let $\mathcal{J} \subseteq \Ok$ be an ideal of holomorphic functions, and
let $(E,\varphi)$ be a Hermitian resolution of $\Ok/\mathcal{J}$, i.e.,
$(E,\varphi)$ is a free resolution
\begin{equation*}
    0 \xrightarrow[]{} E_N \xrightarrow[]{\varphi_N} E_{N-1} \xrightarrow[]{\varphi_{N-1}}
    \cdots \xrightarrow[]{\varphi_2} E_1 \xrightarrow[]{\varphi_1} E_0 \xrightarrow[]{} \Ok/\mathcal{J},
\end{equation*}
where the free modules $E_k \cong \Ok^{r_k}$ are equipped with Hermitian metrics.

To construct the current associated to $E$, one first defines, outside of
$Z = Z(\mathcal{J})$, right inverses $\sigma_k : E_{k-1} \to E_k$ to $\varphi_k$
which are minimal with respect to some metric on $E$, i.e.,
$\varphi_k \sigma_k|_{\Im \varphi_k} = \Id_{\Im \varphi_k}$, $\sigma_k = 0$ on
$(\Im \varphi_k)^\perp$, and $\Im \sigma_k \perp \ker \varphi_k$. One lets
$\sigma = \sigma_1 + \dots + \sigma_N$, and
\begin{equation} \label{equdef}
    u^E = \sigma + \sigma\dbar\sigma + \dots + \sigma(\dbar\sigma)^N.
\end{equation}
Letting $\nabla_{\End}$ be the morphism on $\D(\End E)$ induced by $\nabla = \varphi - \dbar$
by $\nabla_{\End}(\alpha) = \nabla\circ\alpha - \alpha\circ\nabla$,
one has that $\nabla_{\End} u^E = I_E$ outside of $Z$. The form $u^E$, which is smooth outside
of $Z$, has a current extension $U^E := |F|^{2\lambda} u^E |_{\lambda = 0}$
over $Z$, where $F \not\equiv 0$ is a holomorphic function vanishing at $Z$
and for $\Re \lambda \gg 0$, the right-hand side is is a (current-valued) analytic
function in $\lambda$, and $|_{\lambda = 0}$ denotes the analytic continuation to $\lambda = 0$.
The residue current $R^E$ associated to $E$ is defined as
\begin{equation} \label{eqrdef}
    R^E := I_E - \nabla_{\End} U^E.
\end{equation}
Alternatively, one could define $R^E$ by 
\begin{equation} \label{eqrlambda}
    R^E = \dbar |F|^{2\lambda} \wedge u^E |_{\lambda = 0}.
\end{equation}
See \cite{AW1} for more details.
From the proof of the existence of $U^E$ and $R^E$, it follows that they are
pseudomeromorphic.

Let $R^E_k$ denote the part of $R^E$ with values in $E_k$, i.e., $R^E_k$ is
a $E_k$-valued $(0,k)$-current. If $Z = Z(\mathcal{J})$, and $\codim Z = p$,
then we will in fact have that
\begin{equation} \label{eqrepN}
    R^E = R^E_p + \dots + R^E_N,
\end{equation}
where $N$ is the length of the free resolution $(E,\varphi)$.

The fundamental property of the current $R^E$ is the following, \cite{AW1}, Theorem~1.1.

\begin{thm} \label{thmreann}
    Let $R^E$ be the current associated to a free resolution $(E,\varphi)$
    of an ideal $\mathcal{J}$.
    Then $\ann R^E = \mathcal{J}$.
\end{thm}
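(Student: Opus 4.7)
The plan is to establish the two inclusions of $\ann R^E = \mathcal{J}$ separately. First I would record two basic structural facts: by construction $R^E$ is supported on $Z = Z(\mathcal{J})$ (since $u^E$ is smooth and $\nabla_{\End} u^E = I_E$ outside $Z$, so $R^E$ vanishes there), and $\nabla_{\End} R^E = 0$ as a formal consequence of $\nabla^2 = 0$ applied to \eqref{eqrdef}. The dimension principle (Proposition~\ref{proppmdim}), applied to each pseudomeromorphic $(0,k)$-component $R^E_k$, then immediately gives the decomposition \eqref{eqrepN}, namely $R^E = R^E_p + \cdots + R^E_N$ for $p = \codim Z$.

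For the inclusion $\mathcal{J} \subseteq \ann R^E$: given $f \in \mathcal{J}$, multiplication by $f$ acts as zero on $\Ok/\mathcal{J}$, so by standard homological algebra it lifts to a null-homotopy of holomorphic $\Ok$-morphisms $s_k : E_k \to E_{k+1}$ satisfying $f \cdot I_E = \nabla_{\End} s$. One then computes
\begin{equation*}
    f R^E = f I_E - \nabla_{\End}(f U^E) = \nabla_{\End}(s - f U^E),
\end{equation*}
exhibiting $f R^E$ as a $\nabla_{\End}$-exact pseudomeromorphic current supported on $Z$. A careful degree-by-degree analysis of the components of $s - f U^E$, combined with the dimension principle and the specific structure of $u^E$ built from the minimal right inverses $\sigma_k$ of $\varphi_k$, then forces $f R^E = 0$.

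For the converse inclusion $\ann R^E \subseteq \mathcal{J}$, my plan is to reduce to the complete intersection setting and invoke the duality theorem for Coleff-Herrera products. Locally, I would choose $g_1, \dots, g_p \in \mathcal{J}$ forming a regular sequence with $Z(g_1, \dots, g_p) = Z$ generically, and compare $R^E$ to $\mu^g$ via an explicit transformation encoded in $(E, \varphi)$. Any $h$ with $h R^E = 0$ then produces a relation on $\mu^g$; duality puts $h$ in $\mathcal{J}(g_1, \dots, g_p)$ up to corrections supported on strata of higher codimension, and induction over these embedded/higher-codimensional components---using that the full information of the associated primes of $\mathcal{J}$ is encoded in $R^E_p, \dots, R^E_N$---eventually yields $h \in \mathcal{J}$.

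The main obstacle is this second inclusion. The first one is essentially algebraic plus soft analytic input, whereas the converse genuinely uses the specific minimality properties of the $\sigma_k$: it is through these that the metric-theoretic construction faithfully records the algebraic structure of $\mathcal{J}$ beyond its set-theoretic support. In particular, controlling the higher-codimensional parts $R^E_k$ with $k > p$---which encode the embedded primary components of $\mathcal{J}$---is the subtle step, since they lie outside the reach of the dimension principle and require the full force of the Hermitian minimality in the construction.
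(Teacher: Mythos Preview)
The paper does not give its own proof of this theorem. It appears in the preliminaries (Section~\ref{ssectaw}) as a recalled result, introduced with ``The fundamental property of the current $R^E$ is the following, \cite{AW1}, Theorem~1.1,'' and no argument follows. So there is nothing in this paper to compare your proposal against; the proof lives entirely in \cite{AW1}.

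That said, your sketch does not match the argument in \cite{AW1}, and the second half has a genuine gap. For the inclusion $\ann R^E \subseteq \mathcal{J}$, Andersson--Wulcan do \emph{not} reduce to the Coleff--Herrera duality theorem. Their argument is self-contained: if $\phi R^E = 0$, then from $\nabla_{\End} U^E = I_E - R^E$ one gets $\nabla(\phi U^E) = \phi$, and one then works from the top of the complex downward, solving local $\dbar$-equations to successively replace the current-valued sections $\phi U^E_k$ by holomorphic ones, until $\phi$ is exhibited as $\varphi_1$ applied to a holomorphic section of $E_1$. The Hermitian minimality of the $\sigma_k$ plays no role in this direction; what matters is exactness of $(E,\varphi)$ and local solvability of $\dbar$. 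Your proposed route---choose a regular sequence $g_1,\dots,g_p$ in $\mathcal{J}$, compare to $\mu^g$, and ``induct over embedded components''---only gives membership in the complete intersection ideal $\mathcal{J}(g)$, which can be strictly larger than $\mathcal{J}$, and you offer no concrete mechanism to close the gap. The phrase ``the full information of the associated primes of $\mathcal{J}$ is encoded in $R^E_p,\dots,R^E_N$'' is true in spirit but is precisely the content of the theorem you are trying to prove, so invoking it here is circular.

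For the inclusion $\mathcal{J} \subseteq \ann R^E$, your null-homotopy idea is in the right direction and close to what \cite{AW1} does, though their version is lighter: one uses the single lift $f = \varphi_1 \xi$ together with $\nabla R^E = 0$ and the dimension principle, rather than a full homotopy $s$.
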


In particular, if $\mathcal{J}$ is a complete intersection ideal,
$\mathcal{J} = \mathcal{J}(h_1,\dots,h_p)$, then the Koszul complex of $h$
is a free resolution of $\Ok/\mathcal{J}_Z$. In that case, both the Coleff-Herrera
product of $h$ and the current associated to the Koszul complex are currents with
annihilator equal to $\mathcal{J}$, and in fact they turn out to coincide.
Here, we identify the tuple $f$ with a section of $G^*$, where $G \cong \Ok^{\oplus p}$
with a frame $e_1,\dots,e_p$, so that $f = \sum f_i e_i^*$.

\begin{thm} \label{thmbmch}
    Let $f = (f_1,\dots,f_p)$ be a tuple of holomorphic functions defining a complete intersection.
    Let $R^f$ be the current associated to the Koszul complex of $f$,
    $R^f = \mu \wedge e_1\wedge \dots \wedge e_p$, and let $\mu^f$ be the Coleff-Herrera product
    of $f$. Then $\mu = \mu^f$.
\end{thm}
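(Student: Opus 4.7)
The plan is to compute $R^f$ explicitly on the Koszul complex, identify its scalar part with the Bochner--Martinelli residue current, and then invoke the classical equality between the Bochner--Martinelli residue current and $\mu^f$ in the complete intersection case.

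First I would unwind \eqref{equdef} for the Koszul complex $E_\bullet = \Lambda^\bullet G$ equipped with the Hermitian metric induced by the trivial metric on $G$, so that $\varphi_k = \delta_f$ is contraction by $f = \sum f_i e_i^*$. Outside $Z = Z(f)$, the section
\[
    s = \sum_{i=1}^p \frac{\bar{f}_i}{|f|^2}\,e_i \in G
\]
satisfies $\delta_f s = 1$, and hence $\delta_f(s\wedge\alpha) + s\wedge\delta_f\alpha = \alpha$, so the minimal right inverse of $\varphi$ is $\sigma_k = s\wedge\cdot$. Consequently $\sigma(\dbar\sigma)^{k-1} = s\wedge(\dbar s)^{k-1}$, and the component of $u^E$ landing in $E_p$ is the Bochner--Martinelli-type form $s\wedge(\dbar s)^{p-1}$ tensored with $e_1\wedge\cdots\wedge e_p$. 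Since $f$ is a complete intersection of codimension $p$, \eqref{eqrepN} forces $R^f = R^f_p$, so by \eqref{eqrlambda}
\[
    R^f = \left.\dbar|F|^{2\lambda}\wedge s\wedge(\dbar s)^{p-1}\right|_{\lambda=0} \wedge e_1\wedge\cdots\wedge e_p
\]
for $F$ any holomorphic function vanishing on $Z$; extracting $e_1\wedge\cdots\wedge e_p$ identifies $\mu$, up to a combinatorial sign, with the scalar Bochner--Martinelli residue current $R^{\mathrm{BM}}_f$ of Passare--Tsikh--Yger.

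It remains to prove that $R^{\mathrm{BM}}_f = \mu^f$. I would reduce to the monomial case via a log resolution $\pi : \tilde{X} \to X$ making each $\pi^* f_j$ locally monomial in normal-crossings coordinates. Both $\pi^*\mu^f$ and $\pi^* R^{\mathrm{BM}}_f$ are pseudomeromorphic, and in each chart one verifies that they reduce to the same elementary tensor current built from the one-variable identity $\dbar(|z|^{2\lambda}/z)|_{\lambda=0} = \dbar(1/z)$, together with a Fubini-type argument justifying the interchange of the analytic continuations. Pushing forward along $\pi$ yields the equality on $X$.

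The main obstacle is precisely this last step: one must carefully track signs, combinatorial factors, and the interplay between the single analytic continuation $|F|^{2\lambda}$ defining $R^{\mathrm{BM}}_f$ and the iterated Coleff--Herrera unrestricted limit defining $\mu^f$, as both are unwound on the resolution. An alternative that is more in the spirit of the present paper is the following uniqueness argument: both $\mu$ and $\mu^f$ are $\dbar$-closed currents in $\PM$ with support on $Z$ and with the SEP, and by Theorem~\ref{thmreann} together with the duality theorem both have annihilator $\mathcal{J}(f)$; the Dickenstein--Sessa uniqueness for Coleff--Herrera currents then gives $\mu = c\,\mu^f$ for a holomorphic unit $c$, whose value is pinned to $1$ by evaluating at a smooth point where $f$ extends to a holomorphic coordinate system, reducing to the one-variable identity above.
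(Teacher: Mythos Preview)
The paper does not give its own proof of this theorem: immediately after the statement it attributes the result to Passare--Tsikh--Yger \cite{PTY}, Theorem~4.1, with an alternative proof in Andersson \cite{AndCH}, Corollary~3.2. So there is no ``paper's proof'' to compare against; the theorem is quoted as background.

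That said, your two sketches line up rather well with the two cited references. Your first approach (log resolution, reduction to monomials) is in spirit the \cite{PTY} argument, and you are right to flag the analytic-continuation bookkeeping as the delicate point; note also that writing ``$\pi^*\mu^f$'' and ``$\pi^* R^{\mathrm{BM}}_f$'' is abusive, since pullbacks of currents are not defined in general---what one really does is define each current upstairs via its regularization and then push forward. Your second approach (uniqueness of Coleff--Herrera currents plus normalization at a smooth point) is essentially Andersson's argument in \cite{AndCH}; the phrase ``with the SEP'' should be sharpened to ``lying in the Coleff--Herrera class $CH_Z$'', i.e.\ $\dbar$-closed, annihilated by $\bar f_j$ and $d\bar f_j$, and with the SEP along $Z$, which is what the Dickenstein--Sessa type uniqueness actually requires.
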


The current $R^f$ was originally defined by Passare, Tsikh and Yger in \cite{PTY}
(in a more direct way), referred to as a Bochner-Martinelli type residue current.
The equality of the Coleff-Herrera product and the Bochner-Martinelli type residue current
was originally proved in \cite{PTY}, Theorem~4.1, see also \cite{AndCH}, Corollary~3.2
for an alternative proof.

The definition of the Coleff-Herrera product and Bochner-Martinelli type current works
also in the singular case, and the equality of those in the case of complete intersection,
Theorem~\ref{thmbmch} also holds; the proof in \cite{AndCH} works also in the singular case,
see \cite{Lar1}, Theorem~6.4.

Note that from Theorem~\ref{thmreann} and Theorem~\ref{thmbmch}, the construction
by Andersson and Wulcan of a current with a prescribed annihilator ideal can be
seen as a generalization of the Coleff-Herrera product and the duality theorem
for Coleff-Herrera products.

We introduce the notation
\begin{equation} \label{eqRJX}
    R^\mathcal{J}_X := R^E\wedge \omega_X = \omega_X - \nabla(U^E \wedge \omega_X),
\end{equation}
where $R^E$ is the current associated to a minimal free resolution $(E,\varphi)$ of $\Ok/\mathcal{J}$,
and $\omega_X$ is a global holomorphic non-vanishing $(n,0)$-form on $X$ (for example if
$X$ is an open subset of $\Cn_z$, we can take $\omega_X = dz:= dz_1\wedge \dots dz_n$).
Note that since $\omega_X$ is assumed to be holomorphic and non-vanishing, we will
have that $\ann R^\mathcal{J}_X = \ann R^E = \mathcal{J}$, so in this setting,
the advantage of multiplying with the factor $\omega_X$ will not be very apparent,
but it will be important when we generalize this to singular varieties.

\subsection{A comparison formula for residue currents} \label{ssectcf}

An important tool in this article will be a comparison formula
for Andersson-Wulcan currents, \cite{Lar3}, which can be seen as a generalization
of the transformation law for Coleff-Herrera products.

Let $\mathcal{I} \subseteq \mathcal{J}$ be two ideals of holomorphic
functions, and let $(F,\psi)$ and $(E,\varphi)$ be free resolutions
of $\Ok/\mathcal{I}$ and $\Ok/\mathcal{J}$ respectively.
Since $\mathcal{I} \subseteq \mathcal{J}$, we have the natural surjection
$\pi : \Ok/\mathcal{I} \to \Ok/\mathcal{J}$.
By a rather straightforward diagram chase, one can show that there exists
a morphism of complexes $a : (F,\psi) \to (E,\varphi)$ making
the following diagram commute:
\begin{equation} \label{eqamorphism}
\xymatrix{
0 \ar[r] & E_N \ar[r]^{\varphi_N} & E_{N-1} \ar[r]^{\varphi_{N-1}}& \cdots \ar[r]^{\varphi_1} & E_0 \ar[r] &\Ok/\mathcal{J} \ar[r] & 0 \\
0 \ar[r] & F_N \ar[r]^{\psi_N} \ar[u]^{a_N}& F_{N-1}\ar[r]^{\psi_{N-1}} \ar[u]^{a_{N-1}} & \cdots \ar[r]^{\psi_1}& F_0 \ar[u]^{a_0}  \ar[r] & \Ok/\mathcal{I} \ar[u]^{\pi}\ar[r] & 0.
}
\end{equation}

The comparison formula, Theorem~1.2 in \cite{Lar3}, is expressed in terms of this morphism $a$.

\begin{thm} \label{thmRcomparisonIdeals}
    Let $\mathcal{I},\mathcal{J} \subseteq \Ok$ be two ideals of germs of holomorphic functions
    such that $\mathcal{I} \subseteq \mathcal{J}$, and let $(E,\varphi)$ and $(F,\psi)$ be
    minimal free resolutions of $\Ok/\mathcal{J}$ and $\Ok/\mathcal{I}$ respectively.
    Let $a : (F,\psi) \to (E,\varphi)$ be the morphism in \eqref{eqamorphism} induced by
    the natural surjection $\pi : \Ok/\mathcal{I} \to \Ok/\mathcal{J}$. Then,
    \begin{equation} \label{eqRcomparison}
        R^E a_0 = a R^F + \nabla_{\varphi} M,
    \end{equation}
    where $\nabla_{\varphi} = \sum \varphi_k - \dbar$,
    \begin{equation} \label{eqmdef}
        M = \left.\dbar |G|^{2\lambda} \wedge u^E \wedge a u^F \right|_{\lambda = 0},
    \end{equation}
    and $G$ is a tuple of holomorphic functions such that $\{ G = 0 \}$ contains
    the set where $(E,\varphi)$ or $(F,\psi)$ are not pointwise exact.
\end{thm}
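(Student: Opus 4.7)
The approach I would take uses the $\lambda$-regularization technique central to the Andersson--Wulcan framework. For $\Re\lambda \gg 0$, introduce the current-valued analytic function $T^\lambda := |G|^{2\lambda}\, u^E \wedge a\, u^F$. Outside the union of the non-exactness loci of $(E,\varphi)$ and $(F,\psi)$, which is contained in $\{G=0\}$, the forms $u^E$ and $u^F$ are smooth with $\nabla_\varphi u^E = I_E$ and $\nabla_\psi u^F = I_F$. Since $a$ is a morphism of complexes with holomorphic entries, $\nabla_\varphi \circ a = a \circ \nabla_\psi$ and $\nabla_\varphi a = 0$. Applying Leibniz on this smooth locus, with the grading convention making $u^E$ and $u^F$ of odd total degree, yields
\[ \nabla_\varphi(u^E \wedge a u^F) = a u^F - u^E\, a. \]

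Multiplying through by $|G|^{2\lambda}$ and extending by continuity, valid for $\Re\lambda \gg 0$, gives the global current identity
\[ \nabla_\varphi T^\lambda + \dbar|G|^{2\lambda} \wedge u^E \wedge a u^F = |G|^{2\lambda}(a u^F - u^E a). \]
Both sides admit analytic continuations in $\lambda$ to $\lambda = 0$, and continuity of $\nabla_\varphi$ ensures it commutes with the continuation. By definition $\left.\dbar|G|^{2\lambda} \wedge u^E \wedge a u^F\right|_{\lambda=0} = M$, and by independence of the regularization from the precise choice of defining tuple (as long as it vanishes on the relevant exceptional sets), $\left.|G|^{2\lambda} u^E\right|_{\lambda=0} = U^E$ and $\left.|G|^{2\lambda} u^F\right|_{\lambda=0} = U^F$. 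Setting $T := \left.T^\lambda\right|_{\lambda=0}$, the identity at $\lambda=0$ reads $\nabla_\varphi T + M = a U^F - U^E a$.

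Now apply $\nabla_\varphi$ to this identity. Using $\nabla_\varphi^2 = 0$ together with $\nabla_\varphi U^E = I_E - R^E$, and---since $\nabla_\varphi a = 0$---the relations $\nabla_\varphi(a U^F) = a\nabla_\psi U^F = a - a R^F$ and $\nabla_\varphi(U^E a) = (\nabla_\varphi U^E) a = a - R^E a$, one computes
\[ \nabla_\varphi M = (a - a R^F) - (a - R^E a) = R^E a - a R^F. \]
Because $R^E$ is $E$-valued originating from the $E_0$-column, the composition $R^E a$ reduces to $R^E a_0$, yielding the stated identity $R^E a_0 = a R^F + \nabla_\varphi M$.

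The main obstacle is rigorously justifying (i) that $\nabla_\varphi$ commutes with the analytic continuation in $\lambda$, (ii) the identifications $\left.|G|^{2\lambda} u^E\right|_{\lambda=0} = U^E$ and $\left.|G|^{2\lambda} u^F\right|_{\lambda=0} = U^F$, which require independence of the regularization from the defining function once it vanishes on the appropriate exceptional set, and (iii) the careful bookkeeping of graded signs and the reduction from $a$ to $a_0$, each standard within the Andersson--Wulcan machinery but requiring attention to conventions.
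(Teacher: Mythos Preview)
The paper does not actually prove this theorem: it is stated in Section~2.4 as background and attributed to \cite{Lar3}, Theorem~1.2, with no proof given here. So there is no ``paper's own proof'' to compare against in this document.

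That said, your argument is the natural one and is essentially how the result is proved in the cited reference. The core identity $\nabla_\varphi(u^E\wedge a u^F)=a u^F-u^E a$ on the locus where both complexes are pointwise exact, followed by the $\lambda$-regularization and a second application of $\nabla_\varphi$, is exactly the mechanism behind the comparison formula. Your listed obstacles (i)--(iii) are the right ones; in particular, the reduction $R^E a = R^E a_0$ requires knowing that for a free resolution the only nonvanishing components of $R^E$ (as an $\End E$-valued current) are those in $\Hom(E_0,E_k)$, which is part of the Andersson--Wulcan machinery but should be stated explicitly. One small point: when you write $\nabla_\varphi(U^E a) = (I_E - R^E)a$ you are implicitly using that $a$ is holomorphic so that $\dbar a = 0$, and when you write $\nabla_\varphi(aU^F)=a\nabla_\psi U^F$ you are using that $a$ intertwines the two differentials; both are correct but worth making explicit since they are the only places the chain-map hypothesis on $a$ enters.
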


\subsection{Singularity subvarieties of free resolutions} \label{ssectbef}

In the study of residue currents associated to free resolutions of ideals, an important
ingredient is certain singularity subvarieties associated to the ideal.
Given a free resolution $(E,\varphi)$ of an ideal $\mathcal{J}$, the variety $Z_k = Z^E_k$ is defined
as the set where $\varphi_k$ does not have optimal rank. These sets are independent of the choice of free
resolution. If $\codim Z(\mathcal{J}) = p$, then
$Z_k = Z$ for $k \leq p$, Corollary~20.12 in \cite{Eis}. In addition, Corollary~20.12 says that
$Z_{k+1} \subseteq Z_k$, and $\codim Z_k \geq k$ by Theorem~20.9 in \cite{Eis}.
In fact, Theorem~20.9 in \cite{Eis} is a characterization of exactness, the
Buchsbaum-Eisenbud criterion, which says that a generically exact complex of free
modules is exact if and only if $\codim Z_k \geq k$.

However, more precise information is obtained about which irreducible components $Z_k$
that are of maximal dimension. By Corollary 20.14, if $\codim V = k$, then $V \subseteq Z_k$
if and only if $\mathcal{I}_V \in \Ass \mathcal{J}$, i.e., if the ideal of holomorphic functions
vanishing on $V$ is an associated prime of $\Ok/\mathcal{J}$. In particular, if $\mathcal{J}$ is reduced,
$\Ass \mathcal{J}$ correspond exactly the irreducible components of $Z = Z(\mathcal{J})$. In that case, if
we let $W^d$ be the union of the irreducible components of $Z$ of codimension $p = n-d$, then $Z_p = W^d \cup Z_p'$,
where $\codim Z_p' \geq p+1$. If we consider $e > d$, then $\codim W^d \cap W^e \geq p+1$, so we get
that
\begin{equation} \label{eqzkprim}
    \codim W^e \cap Z_p \geq p+1.
\end{equation}

\subsection{Tensor products of free resolutions} \label{ssecttensprod}

In this section, we describe how under suitable conditions on ``proper'' intersection,
one can construct a free resolution of a sum of ideals from free resolutions of the
individual ideals. To begin with, let $(E,\varphi)$ and $(F,\psi)$ be two complexes.
The tensor product complex $(E\otimes F,\varphi\otimes\psi)$ is defined by
$(E\otimes F)_k = \oplus_{p+q=k} E_p\otimes F_q$ and
$(\varphi\otimes\psi)(\xi\otimes\eta) = \varphi(\xi)\otimes\eta + (-1)^i\xi\otimes\psi(\eta)$
if $\xi \in E_i$ and $\eta \in F_j$.
Note in particular that if $(E,\varphi)$ and $(F,\psi)$ are minimal free resolutions of ideals
$\mathcal{J}$ and $\mathcal{I}$, then $E_0 \cong \Ok \cong F_0$, and
$(\varphi\otimes\psi)_1 : E_1 \oplus F_1 \to \Ok$, $(\varphi\otimes\psi)_1 = \varphi_1 \oplus \psi_1$,
so if the tensor product complex is exact, it is a free resolution of $\mathcal{J}+\mathcal{I}$.
The tensor product complex will be exact if the corresponding singularity subvarieties
intersect properly in the following sense.

\begin{prop} \label{proptensorprodcomplexes}
    Let $(E,\varphi)$ and $(F,\psi)$ be free resolutions of ideal sheaves
    $\mathcal{J}$ and $\mathcal{I}$, and let $Z^E_k$ and $Z^F_l$ be the associated
    sets where $\varphi_k$ and $\psi_l$ do not have optimal rank.
    Then $(E \otimes F,\varphi\otimes \psi)$ is a free resolution of $\mathcal{I} + \mathcal{J}$
    if and only if $\codim ( Z^E_k \cap Z^F_l ) \geq k + l$ for all $k \geq \codim Z(\mathcal{J})$,
    $l \geq \codim Z(\mathcal{I})$.

    In addition, if $\mathcal{I}$ and $\mathcal{J}$ are Cohen-Macaulay ideals, and 
    $(E,\varphi)$ and $(F,\psi)$ are free resolutions of minimal length, then
    \begin{equation} \label{eqREFprod}
        R^{E\otimes F} = \left.(I_E - \nabla_\varphi(|G|^{2\lambda} u^E))\wedge R^F\right|_{\lambda=0},
    \end{equation}
    where $G$ is a tuple of holomorphic functions vanishing on $Z(\mathcal{J})$ but not identically
    on any irreducible component of $Z(\mathcal{I})$.
\end{prop}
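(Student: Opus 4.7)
The plan is to handle the two statements separately: the exactness criterion via the Buchsbaum--Eisenbud theorem, and the product formula for the residue current via a careful analytic continuation argument once a suitable decomposition of $u^{E\otimes F}$ is obtained.

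For the exactness of $(E \otimes F, \varphi \otimes \psi)$, I first observe that the complex is generically exact, in fact on the complement of $Z(\mathcal{J}) \cap Z(\mathcal{I})$: at a point where $(F,\psi)$ is exact, tensoring with the free modules $E_p$ preserves exactness of each row of the associated double complex, and a standard spectral sequence argument (filtering by one degree) then yields exactness of the total complex. Thus the Buchsbaum--Eisenbud criterion recalled in Section~\ref{ssectbef} reduces the exactness of $E \otimes F$ to the codimension condition $\codim Z^{E \otimes F}_k \geq k$. The key combinatorial step is the inclusion
\begin{equation*}
    Z^{E \otimes F}_k \subseteq \bigcup_{\substack{p+q = k \\ p \geq \codim Z(\mathcal{J}),\ q \geq \codim Z(\mathcal{I})}} Z^E_p \cap Z^F_q,
\end{equation*}
which can be verified by computing the rank of $(\varphi \otimes \psi)_k$ pointwise from the ranks of $\varphi_p$ and $\psi_q$. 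The hypothesis then yields $\codim Z^{E \otimes F}_k \geq k$ termwise. For the converse direction, I would use the reverse inclusion, which can be extracted by studying the associated primes of $\Ok/(\mathcal{J} + \mathcal{I})$ via Corollary~20.14 of \cite{Eis} and showing that each intersection $Z^E_p \cap Z^F_q$ with $p+q=k$ is forced into $Z^{E \otimes F}_k$.

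For the product formula \eqref{eqREFprod}, the Cohen-Macaulay hypothesis combined with the first part guarantees that $E \otimes F$ is itself a minimal length resolution of $\Ok/(\mathcal{J}+\mathcal{I})$, so $R^E$, $R^F$, and $R^{E\otimes F}$ each consist of a single nonzero component in their top degree by \eqref{eqrepN}. Equipping $E \otimes F$ with the tensor product Hermitian metric, the minimal right inverses decompose compatibly as $\sigma^{E \otimes F} = \sigma^E \otimes \Id + \Id \otimes \sigma^F$ on the appropriate complementary subspaces, and expanding \eqref{equdef} then expresses $u^{E \otimes F}$ as a combination of products and sums of $u^E$ and $u^F$ (outside $Z(\mathcal{J}) \cup Z(\mathcal{I})$). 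Substituting into \eqref{eqrlambda} and applying the Leibniz rule together with the identity $\nabla_\psi R^F = 0$ (which follows from $\nabla_\psi U^F = I_F - R^F$ and $\nabla_\psi^2 = 0$) reduces the expression to the right-hand side of \eqref{eqREFprod}. The hypothesis that $G$ vanishes on $Z(\mathcal{J})$ ensures that the factor $(I_E - \nabla_\varphi(|G|^{2\lambda} u^E))|_{\lambda=0}$ produces $R^E$ on the support locus, while the hypothesis that $G$ does not vanish identically on $Z(\mathcal{I})$ ensures the analytic continuation of the product with $R^F$ is nontrivial.

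The main obstacle I anticipate is the careful handling of the iterated analytic continuation in Part~2: the expression $(I_E - \nabla_\varphi(|G|^{2\lambda} u^E)) \wedge R^F |_{\lambda = 0}$ mixes the explicit $|G|^{2\lambda}$ regularization with the implicit regularization inside $R^F$. A robust proof would introduce a second parameter $\mu$ via $R^F = \dbar |H|^{2\mu} \wedge u^F |_{\mu = 0}$, verify that the resulting two-parameter current-valued function admits an analytic continuation to $(\lambda, \mu) = (0,0)$, and check path-independence of the limit. The almost semi-meromorphic framework of Section~\ref{ssectasmpm} together with Proposition~\ref{propprodasm} provides the natural setting, and the vanishing/non-vanishing hypotheses on $G$ are precisely what make the continuation unambiguous.
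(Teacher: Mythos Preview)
The paper does not prove this proposition itself: immediately after the statement it defers both parts to \cite{AndArk} (Remark~4.6 for the exactness criterion, Theorem~4.2 for the product formula \eqref{eqREFprod}), noting only that the range restriction $k \geq \codim Z(\mathcal{J})$, $l \geq \codim Z(\mathcal{I})$ is harmless because $Z^E_k = Z^E_p$ for $k \leq p = \codim Z(\mathcal{J})$. So there is no in-paper proof to compare against; you are attempting to supply what the paper outsources.

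Your outline for the first part is reasonable and is essentially the standard route: generic exactness plus Buchsbaum--Eisenbud, with the rank of $(\varphi\otimes\psi)_k$ controlled pointwise by the ranks of the $\varphi_p$ and $\psi_q$. The union you write down already bakes in the reduction $Z^E_k = Z^E_p$ for small $k$, which is fine but should be stated. The converse via associated primes is vaguer; in \cite{AndArk} the equivalence comes out of the same rank bookkeeping rather than a separate associated-primes argument.

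The genuine weak point is your claim that with the tensor-product metric one has $\sigma^{E\otimes F} = \sigma^E\otimes\Id + \Id\otimes\sigma^F$ on complementary subspaces. This is not correct as stated: the minimal inverse of $\varphi\otimes 1 \pm 1\otimes\psi$ involves $(\eta\eta^* + \eta^*\eta)^{-1}$ for $\eta = \varphi\otimes\psi$, and the resulting expression mixes the two factors nontrivially (compare the explicit computation \eqref{eqeta2}--\eqref{eqsigmagexpr2} later in the paper for the special case where $F$ is a Koszul complex of one function, where already a correction term $|w|^2$ appears). The argument in \cite{AndArk} does not proceed by such a decomposition of $\sigma$; it instead works with the forms $u^E$, $u^F$ directly and uses that in the Cohen--Macaulay case only the top-degree pieces of the residue currents survive, so many cross terms vanish by the dimension principle. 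Your two-parameter analytic-continuation idea at the end is closer in spirit to what is actually needed, but it cannot be grounded on the $\sigma$-decomposition you assert.
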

A proof of the first part can be found in \cite{AndArk}, Remark~4.6, which we have reformulated slightly,
by only requiring the condition to hold for $k \geq \codim Z(\mathcal{J})$,
$l \geq \codim Z(\mathcal{I})$ instead of $k,l \geq 1$. However, this reformulation follows from the fact
that $Z^E_k = Z^E_p$ for $k \leq \codim Z(\mathcal{J})$ (and similarly for $Z^F_l$). The second part is part of Theorem~4.2 in \cite{AndArk}.

When $E$ and $F$ are equipped with Hermitian metrics, we will assume
that $E\otimes F$ is equipped with the product metric induced from
the metrics of $E$ and $F$.

\section{Currents with prescribed annihilator ideals on singular varieties of pure dimension} \label{sectpure}

Let $Z$ be an analytic subvariety of pure dimension $d$ of $\Omega\subseteq \Cn_z$.
We first consider the current $R^{\mathcal{I}_Z}$ associated to $\mathcal{I}_Z$,
the ideal of holomorphic functions on $\Omega$ vanishing on $Z$.
In \cite{AS2}, Andersson and Samuelsson showed that there exists what they
call a \emph{structure form} $\omega_Z$ associated to $Z$, generalizing the
Poincar\'e residue in Section~\ref{ssectcurrvar}.
The following part of Proposition~3.3 in \cite{AS2} will be sufficient for our purposes.

\begin{prop} \label{propomegapure}
    Let $(F,\psi)$ be a Hermitian resolution of $\Ok_\Omega/\mathcal{I}_Z$,
    and let $R^{\mathcal{I}_Z}$ be the associated residue current.
    Then there exists an almost semi-meromorphic current
    \begin{equation*}
        \omega_Z = \omega_0 + \dots + \omega_{d-1}
    \end{equation*}
    on $Z$, where $\dim Z = d$, $\codim Z = p$, and $\omega_r$ has bidegree
    $(d,r)$ and takes values in $F_{p+r}$, such that
    \begin{equation} \label{eqomegaR}
        i_* \omega_Z = R^{\mathcal{I}_Z} \wedge dz,
    \end{equation}
    where $i : Z \to \Omega$ is the inclusion and $dz := dz_1\wedge \dots \wedge dz_n$.
\end{prop}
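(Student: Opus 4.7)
The plan is to realize $\omega_Z$ as the unique current on $Z$ whose push-forward under $i$ equals $R^{\mathcal{I}_Z}\wedge dz$, and to extract the bidegree decomposition and almost semi-meromorphic structure by working on a log-resolution of $\mathcal{I}_Z$.

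First I would show that $R^{\mathcal{I}_Z}\wedge dz$ is the push-forward of a current on $Z$ in the sense of Section~\ref{ssectcurrvar}. Since $U^E = |F|^{2\lambda} u^E|_{\lambda=0}$ is smooth off $Z$, we have $\supp R^{\mathcal{I}_Z} \subseteq Z$, and by Theorem~\ref{thmreann}, $h R^{\mathcal{I}_Z} = 0$ for every $h \in \mathcal{I}_Z$. To conclude that $R^{\mathcal{I}_Z}\wedge dz$ annihilates also test forms with factors $\bar h$ or $d\bar h$ for $h \in \mathcal{I}_Z$, I would use that $R^{\mathcal{I}_Z}$ is pseudomeromorphic: regularizing by $|h|^{2\lambda}$ and invoking the standard extension property one irreducible component of $Z$ at a time, $\bar h R^{\mathcal{I}_Z}$ must vanish since it would be supported on the proper subvariety $\{h = 0\}\cap Z$; the case of $d\bar h$ then follows by differentiating $h R^{\mathcal{I}_Z} = 0$ via Leibniz' rule together with a bidegree induction.

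Next, the bidegree decomposition is automatic from \eqref{eqrepN}: writing $R^{\mathcal{I}_Z} = R_p + \dots + R_N$ with $R_k$ a $(0,k)$-current with values in $F_k$, each $R_k\wedge dz$ has bidegree $(n,k)$ on $\Omega$, so it corresponds to an $F_{p+r}$-valued $(d,r)$-current $\omega_r$ on $Z$ with $r = k - p$. The upper bound $r \leq d-1$ reduces to $R_k = 0$ for $k \geq n$. For a minimal free resolution this follows from Auslander--Buchsbaum: since $\Ok_\Omega/\mathcal{I}_Z$ is reduced of positive dimension it has depth at least $1$, hence projective dimension at most $n - 1$. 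For a general (non-minimal) $(F,\psi)$ the additional trivial summands $\Ok \xrightarrow[]{\mathrm{id}} \Ok$ contribute nothing to $R^E$, so the vanishing is preserved.

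Finally, for almost semi-meromorphicity I would pass to a smooth modification $\pi : \tilde{\Omega} \to \Omega$ which is an isomorphism over $Z_{\reg}$ and principalizes $\pi^* \mathcal{I}_Z$. On $\tilde{\Omega}$, $u^E$ can be chosen to be almost semi-meromorphic following the construction in \cite{AW1}, and $\dbar |F|^{2\lambda}\wedge u^E|_{\lambda=0}$ admits an explicit local description as a product of a one-variable $\dbar$-residue with an almost semi-meromorphic form. Composing with the strict transform $\tilde{Z} \to Z$, which is itself a smooth modification of $Z$, realizes each $\omega_r$ as the push-forward of a semi-meromorphic current and thus as an almost semi-meromorphic current on $Z$. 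The main obstacle I anticipate is the first step, namely the $d\bar h$-case of the push-forward property, which requires a careful combination of Leibniz' rule, the standard extension property, and the bidegree decomposition before the almost semi-meromorphic structure on a resolution is available.
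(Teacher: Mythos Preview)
The paper does not prove this proposition itself; it is quoted from \cite{AS2}, Proposition~3.3. However, the paper reproduces the argument in Section~\ref{sectnonpure} (Lemma~\ref{lmaomeganonpure} and the proof of Proposition~\ref{propomeganonpure}) when extending it to the non-pure case, so one can compare your outline to that.

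Your steps 1--3 are reasonable in spirit, but step~4, where almost semi-meromorphicity is supposed to come from, has a genuine gap. An almost semi-meromorphic current on $Z$ is by definition the push-forward of a semi-meromorphic current from a smooth modification \emph{of $Z$}, not of $\Omega$. On your modification $\pi:\tilde\Omega\to\Omega$ the current $R^{\mathcal{I}_Z}$ becomes (locally) a product of a residue factor $\dbar(1/s)$ with an almost semi-meromorphic form; but $\dbar(1/s)$ is not semi-meromorphic, and its support is the \emph{total} transform $\{s=0\}=\pi^{-1}(Z)$, which contains exceptional components in addition to the strict transform $\tilde Z$. Restricting or ``composing with'' $\tilde Z\to Z$ therefore does not by itself exhibit $\omega_Z$ as a push-forward of something semi-meromorphic from a modification of $Z$. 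There is also an internal inconsistency: if $\pi$ principalizes $\mathcal{I}_Z$ and $\codim Z=p>1$, then $\pi$ necessarily modifies $\Omega$ along all of $Z$ and cannot be an isomorphism over $Z_{\rm reg}$.

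The argument in \cite{AS2} (as rehearsed in Section~\ref{sectnonpure}) proceeds differently and avoids this problem by working intrinsically on $Z$. One first constructs the bottom piece $\omega_0$ on $Z_{\rm reg}$ and writes it as $\sigma_G\vartheta$, where $g:G\to F_p$ satisfies $\psi_{p+1}g=0$ and $\vartheta=g\omega_0$ is $\dbar$-closed of bidegree $(d,0)$; a regularity result (Example~2.8 in \cite{AS2}) then shows $\vartheta$ extends \emph{meromorphically} to all of $Z$. The form $\sigma_G$ is almost semi-meromorphic on $Z$ after principalizing the Fitting ideal of $g$ (this Fitting ideal is supported in $Z_{p+1}\subset Z_{\rm sing}$, so one modifies $Z$ only over its singular locus). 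Hence $\omega_0=\sigma_G\vartheta$ is almost semi-meromorphic on $Z$. The higher pieces are then obtained inductively via $\omega_r=i^*(\dbar\sigma_{p+r})\wedge\omega_{r-1}$, using that each $i^*\dbar\sigma_{p+r}$ is almost semi-meromorphic on $Z$ and invoking Proposition~\ref{propprodasm}. The key idea your outline is missing is exactly this factorization of the bottom piece as a meromorphic form on $Z$ times an almost semi-meromorphic kernel on $Z$; without it there is no way to manufacture a representation on a modification of $Z$ from a log-resolution of $\Omega$.
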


The structure form $\omega_Z$ plays an important role in \cite{AS2} and \cite{AS3} related
to the $\dbar$-equation on singular varieties. It also appears (more implicitly) in
\cite{ASS}, related to the Brian\c{c}on-Skoda theorem on a singular variety.

Let $\mathcal{J}\subseteq\Ok_Z$ be an ideal. We will use the comparison formula
from Section~\ref{ssectcf} in order to construct intrinsically on $Z$ the current
with the prescribed annihilator ideal in terms of almost semi-meromorphic currents.
Let $\mathcal{\tilde{J}} \subseteq \Ok_\Omega$ be the maximal lifting of the ideal $\mathcal{J}$,
i.e., the largest ideal $\mathcal{\tilde{J}}$ such that $i^* \mathcal{\tilde{J}} = \mathcal{J}$,
where $i^* : \Ok_\Omega \to \Ok_Z$ is induced by the inclusion $i : Z \to \Omega$.
Note that $\mathcal{I}_Z \subseteq \mathcal{\tilde{J}}$ (since $i^* \mathcal{I}_Z = 0$),
so if $(E,\varphi)$ and $(F,\psi)$ are free resolutions of $\mathcal{\tilde{J}}$ and $\mathcal{I}_Z$
respectively, we get a morphism of complexes
\begin{equation} \label{eqadef}
    a : (F,\psi) \to (E,\varphi)
\end{equation}
extending the natural surjection $\pi_* : \Ok_\Omega/\mathcal{I}_Z \to \Ok_\Omega/\mathcal{\tilde{J}}$
as in \eqref{eqamorphism}.

On $Z \setminus Z^E_{p+1}$, let
\begin{equation} \label{eqnudef}
    \nu := \sum_{m \geq k \geq p+1} \sigma_m^E\dbar\sigma^E_{m-1}\dots\dbar\sigma^E_k.
\end{equation}
Note that since $\sigma^E_l$ and $\dbar\sigma^E_l$ are smooth outside $Z_l^E$, we get
that $\nu$ is smooth outside $Z_{p+1}^E$. Note that $Z^E_{p+1} \subset Z_{\rm sing}$
because on the regular part, the Koszul complex of coordinate functions defining the variety is
a free resolution of length $p$. Thus, $\nu$ is defined and smooth on $Z_{\rm reg}$.

Now we are ready to state our main theorem.

\begin{thm} \label{thmmain1}
    Let $Z \subseteq \Omega \subseteq \C^n$ be an analytic subvariety of $\Omega$ of pure dimension,
    where $\Omega$ is an open set in $\C^n$. Let $\mathcal{J} \subseteq \Ok_Z$
    be an ideal.
    Then $\nu$ defined by \eqref{eqnudef} has an extension as an almost
    semi-meromorphic current to $Z$, which we denote by $V^E$.
    If we let 
    \begin{equation} \label{eqrpuredef}
        R^\mathcal{J}_Z  := a \omega_Z - \nabla( V^E \wedge a \omega_Z ),
    \end{equation}
    where $a : (F,\psi) \to (E,\varphi)$ is the morphism in \eqref{eqadef},
    then 
    \begin{equation} \label{eqannRJ}
        \ann_{\Ok_Z} R^\mathcal{J}_Z  = \mathcal{J}.
    \end{equation}
    Moreover,
    \begin{equation} \label{eqRJpushforward}
        i_* R^\mathcal{J}_Z = R^{\mathcal{\tilde{J}}}_\Omega,
    \end{equation}
    where $\mathcal{\tilde{J}} \subseteq \Ok_\Omega$ is the maximal lifting of
    $\mathcal{J}$, and $R^{\mathcal{\tilde{J}}}_\Omega$ is the current associated
    to $\mathcal{\tilde{J}}$ as in \eqref{eqRJX}.
\end{thm}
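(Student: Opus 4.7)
The plan is to establish the pushforward identity \eqref{eqRJpushforward} first; the annihilator claim \eqref{eqannRJ} then follows quickly from the Andersson--Wulcan theorem applied to $\tilde{\mathcal{J}}$ in the ambient space, together with the defining property of the maximal lifting. Before that, one must produce the almost semi-meromorphic extension $V^E$. Since $\mathcal{I}_Z \subseteq \tilde{\mathcal{J}}$, the Buchsbaum--Eisenbud criterion gives $Z^E_k \subseteq Z(\tilde{\mathcal{J}}) \subseteq Z$ with $\codim Z^E_k \geq k$ in $\Omega$, so for $k \geq p+1$ the subvariety $Z^E_k$ meets $Z$ in positive codimension on $Z$. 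Each $\sigma_k^E$ in \eqref{eqnudef} is smooth off $Z^E_k$, and the same Hironaka-type construction as in \cite{AW1} (pulling back by a smooth modification of $Z$ that resolves the singularities of the minimal right inverses, and then taking the analytic continuation $|G|^{2\lambda}\nu|_{\lambda=0}$ for $G$ vanishing on $Z^E_{p+1}$) yields an a.s.m.\ extension $V^E$ on $Z$.

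For the pushforward, apply the comparison formula of Theorem~\ref{thmRcomparisonIdeals} with $\mathcal{I} = \mathcal{I}_Z$, $\mathcal{J} = \tilde{\mathcal{J}}$, and the canonical choice $a_0 = \Id$, so that $R^E = aR^F + \nabla_\varphi M$ with $M$ given by \eqref{eqmdef}. Wedging with $dz$ and using $\dbar dz = 0$ (so that $(\nabla_\varphi M)\wedge dz = \nabla(M\wedge dz)$), one obtains $R^E\wedge dz = aR^F\wedge dz + \nabla(M\wedge dz)$, and by \eqref{eqRJX} the left side is $R^{\tilde{\mathcal{J}}}_\Omega$. Proposition~\ref{propomegapure} identifies $aR^F\wedge dz = a\cdot i_*\omega_Z = i_*(a\omega_Z)$. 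Inside the analytic continuation defining $M$, commuting $\dbar|G|^{2\lambda}$ past $u^E$ via the a.s.m./SEP product calculus of Proposition~\ref{propprodasm} gives, up to sign,
\begin{equation*}
M\wedge dz \;=\; u^E \wedge aR^F\wedge dz \;=\; u^E\wedge i_*(a\omega_Z).
\end{equation*}
Since $a\omega_Z$ takes values in $E_{p+r}$ and a nonzero summand of $u^E$ composing with $E_{p+r}$ must have the form $\sigma^E_{p+r+l}\dbar\sigma^E_{p+r+l-1}\cdots\dbar\sigma^E_{p+r+1}$, all of whose lower indices are $\geq p+1$, only the summands constituting $\nu$ survive, whence $u^E\wedge i_*(a\omega_Z) = \pm\, i_*(V^E\wedge a\omega_Z)$. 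Matching this against $i_*R^\mathcal{J}_Z = i_*(a\omega_Z) - \nabla\, i_*(V^E\wedge a\omega_Z)$ establishes \eqref{eqRJpushforward}.

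With \eqref{eqRJpushforward} in hand, the annihilator follows immediately: for $g\in\Ok_Z$ with any lift $\tilde{g}\in\Ok_\Omega$, one has $gR^\mathcal{J}_Z = 0$ iff $\tilde{g}\cdot i_*R^\mathcal{J}_Z = \tilde{g}R^{\tilde{\mathcal{J}}}_\Omega = 0$, iff $\tilde{g}\in\tilde{\mathcal{J}}$ by Theorem~\ref{thmreann}, iff $g\in\mathcal{J}$ by the defining property of the maximal lifting. The main obstacle in the plan is the rigorous justification of the identification $M\wedge dz = \pm\, u^E\wedge i_*(a\omega_Z)$ as currents, which requires careful bookkeeping of the $\lambda$-analytic continuation and verifying that the product of $u^E$ with the SEP current $i_*(a\omega_Z)$ on $\Omega$ agrees, via the a.s.m.\ calculus, with the intrinsic product $V^E\wedge a\omega_Z$ pushed forward from $Z$; this last step is what makes the degree-counting argument (isolating the indices $\geq p+1$) into a genuine current identity rather than a formal calculation.
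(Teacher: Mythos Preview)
Your overall architecture is exactly the paper's: apply the comparison formula, identify $M\wedge dz$ as a pushforward from $Z$, and deduce the annihilator from Theorem~\ref{thmreann}. The gap is entirely in the middle step, and it is not just ``careful bookkeeping''.

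The manipulation ``commute $\dbar|G|^{2\lambda}$ past $u^E$ to get $M\wedge dz = u^E\wedge aR^F\wedge dz$'' is not licensed by Proposition~\ref{propprodasm}. That proposition lets you multiply an almost semi-meromorphic form by a current in $\mathcal{W}(Z)$; it says nothing about interchanging a $\lambda$-regularization with a singular factor. Concretely, $M=\sum_{l<k}M^l_k$ with
\[
M^l_k=\left.\dbar|G|^{2\lambda}\wedge\dbar\sigma^E_k\cdots\sigma^E_{l+1}\,a_l\,\sigma^F_l\dbar\sigma^F_{l-1}\cdots\dbar\sigma^F_1\right|_{\lambda=0},
\]
and the terms with $l<p$ are genuinely present: they involve $a_l u^F_l$, not $a_lR^F_l$, so they do \emph{not} disappear by your degree count (which would only apply once $u^F$ has already been replaced by $R^F$). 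Also, the expression $u^E\wedge i_*(a\omega_Z)$ you write down is not covered by Proposition~\ref{propprodasm} on $\Omega$, since $i_*(a\omega_Z)$ has support in $Z$ and so is not in $\mathcal{W}(\Omega)$; the product must be formed intrinsically on $Z$ and then pushed forward.

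What the paper actually does for this step is an induction on $k-l$ over the pieces $M^l_k$, with two simultaneous claims: (i) $M^l_k=0$ for $l<p$, and (ii) $M^l_k\wedge dz$ has the SEP with respect to $Z$ for $l\geq p$. Both are proved by observing that $M^l_{k}=\sigma^E_{l+1}M^l_l$ (or $\dbar\sigma^E_{k+1}M^l_k$) outside $Z^E_{k+1}$ and then invoking the dimension principle, Proposition~\ref{proppmdim}, using $\codim Z^E_{k+1}\geq k+1$. Claim (i) kills exactly the terms your degree count cannot reach; claim (ii) is what guarantees that the extension of $V^l_k\wedge a_l\omega_{l-p}$ supplied by Proposition~\ref{propprodasm} on $Z$ really agrees with $M^l_k\wedge dz$ across $Z^E_{p+1}$. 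Without this induction-plus-dimension-principle argument, your identification of $M\wedge dz$ with $-i_*(V^E\wedge a\omega_Z)$ remains a formal heuristic.
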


\begin{proof}
    By applying the comparison formula \eqref{eqRcomparison} to $a : (F,\psi) \to (E,\varphi)$,
    and taking the wedge product with $\omega_\Omega = dz$, we get that
    \begin{equation} \label{eqcompformulaz}
        R^{\mathcal{\tilde{J}}}_\Omega = a R^{\mathcal{I}_Z}_\Omega + \nabla(M\wedge \omega_\Omega).
    \end{equation}
    If we show that $M\wedge\omega_\Omega$ in \eqref{eqcompformulaz} is the push-forward
    of $-V^E\wedge a\omega_Z$, then \eqref{eqRJpushforward} will follow from 
    \eqref{eqcompformulaz} together with Proposition~\ref{propomegapure}, and
    \eqref{eqannRJ} follows from the fact that
    $\ann_\Ok R^{\mathcal{\tilde{J}}}_\Omega = \mathcal{\tilde{J}}$.

    The proof that $M\wedge\omega_\Omega$ is the push-forward of $-V^E\wedge a\omega_Z$ will
    be rather similar to the proof of Lemma~6.2 in \cite{Lar3} (which says that $M\wedge\omega_\Omega$
    corresponds to a current on $Z$). We let
    \begin{equation*}
        M^l_k = \dbar |G|^{2\lambda} \wedge \dbar\sigma_k^E\dbar\sigma_{k-1}^E\dots\sigma_{l+1}^E a_l \sigma_l^F\dbar\sigma_{l-1}^F\dots\dbar\sigma_1^F|_{\lambda=0}.
    \end{equation*}
    Note that by using that $\dbar \sigma_{j+1} \sigma_j = \sigma_{j+1}\dbar\sigma_{j}$, it follows that
    the current $M$ in \eqref{eqmdef} is exactly $\sum_{l<k} M^l_k$.
    However, in the definition of $M^l_k$ we also allow $k = l$, which we interpret
    as containing no $\sigma^E$'s at all. The reason we allow $k=l$ is that we use it
    as a starting point for an inductive argument. 

    If $j \geq p+1$, then $\sigma^E_j$ and $\dbar\sigma^E_j$ are smooth outside
    $Z_j \subseteq Z_{p+1}$, which has codimension $\geq p+1$, and since
    $\codim Z = p$, $Z_{p+1}$ has codimension $\geq 1$ in $Z$.
    As in the proof of Proposition~3.3 in \cite{AS2}, one sees that the restrictions
    of $\sigma^E_j$ and $\dbar\sigma^E_j$ to $Z$ are almost semi-meromorphic on $Z$.
    Hence, when can define
    \begin{equation} \label{eqvlkdef}
        V^l_k := \left\{
        \begin{array}{cc} i^*\dbar\sigma^E_k\dots i^*\dbar\sigma^E_{l+2}i^*\sigma^E_{l+1} & l \geq p \\
                          0 & l < p
        \end{array} \right.
    \end{equation}
    as a product of almost semi-meromorphic currents on $Z$ by Proposition~\ref{propprodasm}.

    Note that if $l \geq p$, we have outside of $Z_{p+1}$ that
    \begin{equation} \label{eqmlkomega}
        M^l_k\wedge \omega_\Omega = -\dbar\sigma^E_k\dots\dbar\sigma^E_{l+2}\sigma^E_{l+1} a_l R^F_l\wedge \omega_\Omega
        = i_*(V^l_k \wedge a_l \omega_{l-p}),
    \end{equation}
    where the minus sign in the first equality is due to
    $\dbar\sigma^E_k\dots\dbar\sigma^E_{l+2}\sigma^E_{l+1}$ being of odd degree and
    hence anti-commuting with $\dbar |G|^{2\lambda}$, and the second equality is
    due to \eqref{eqomegaR} and \eqref{eqvlkdef}.
    
    The right-hand side of \eqref{eqmlkomega} has a unique extension as a
    product of almost semi-meromorphic currents by Proposition~\ref{propprodasm}
    and this extension has the SEP with respect to $Z$. Hence, this extension will
    coincide with $M^l_k\wedge\omega_\Omega$ if we can prove that $M^l_k\wedge\omega_\Omega$
    also has the SEP with respect
    to $Z$. When $l < p$, $V^l_k = 0$, so we thus also want to prove that $M^l_k = 0$ if $l<p$.
    We will prove both these statements, i.e., that $M^l_k = 0$ if $l < p$, and that
    $M^l_k\wedge\omega_\Omega$ has the SEP with respect to $Z$ if $l \geq p$, simultaneously by induction over $k-l$.

    For $k=l$, $M^l_l$ is a pseudomeromorphic $(0,l)$-current (note that $M^l_k$ is a $(0,k-1)$-current
    when $k > l$, but an $(0,k)$-current when $k=l$) with support on
    $Z$, which has codimension $p$, so if $l <p$, then $M^l_l = 0$ by the dimension
    principle. For $l \geq p$, note that $M^l_l = a_l R^{\mathcal{I}_Z}_l$, so
    $M^l_l\wedge\omega_\Omega = i_* (a_l \omega_{l-p})$, and since $\omega_{l-p}$ is almost semi-meromorphic
    on $Z$, it has the SEP with respect to $Z$.

    We thus now assume that $M_{k-1}^l = 0$ for $l<p$, and $M_{k-1}^l\wedge\omega_\Omega$ has the SEP with respect to $Z$
    for $l\geq p$, and we want to prove the same for $M_k^l$.
    We first consider the case $k = l+1$. Then $M_{l+1}^l = \sigma_{l+1}^E M^l_l$ outside
    of $Z_{l+1}$. If $k \leq p$, we thus get that $\supp M_{l+1}^l \subseteq Z_{l+1}$,
    and since $M_{l+1}^l$ is a pseudomeromorphic $(0,l)$-current, we get by the dimension
    principle that $M_{l+1}^l = 0$. If $k \geq p+1$, then since $M_{l+1}^l = \sigma_{l+1}^E M^l_l$
    outside of $Z_{l+1}$, and $M^l_l\wedge\omega_\Omega$ has the SEP with respect to $Z$, we get that
    $\supp {\bf 1}_V M_{l+1}^l\wedge\omega_\Omega \subseteq Z_{l+1}$ if $V$ is a subvariety of $Z$
    of codimension $\geq 1$.
    Since ${\bf 1}_V M_{l+1}^l\wedge\omega_\Omega$ is a pseudomeromorphic $(n,l)$-current, it is $0$
    by the dimension principle, i.e., $M_{l+1}^l\wedge\omega_\Omega$ has the SEP with respect to $Z$.
    The argument for $k > l+1$ follows in essentially
    the same way as for $k=l+1$, with the change that $M^l_k = \dbar \sigma^ E_k M^l_{k-1}$ instead.
    The case when $k > p$, while $l < p$ only appears when $k \geq \ell + 2$, and is handled in the
    following way. In that case, by induction, $M^l_{k-1} = 0$. Thus, since
    $M^l_k = \dbar \sigma^E_k M^l_{k-1}$ outside of $Z_k$, $\supp M^l_k \subseteq Z_k$, and since
    $M^l_k$ is a pseudomeromorphic $(0,k-1)$-current, we get by the dimension principle that $M^l_k = 0$.

    Thus, we see from \eqref{eqmlkomega} that $i_* (-V^E\wedge a \omega_Z) = M\wedge\omega_\Omega$
    since $M = \sum_{k > l} M^l_k$, $V^E = \sum_{k > l \geq p} V^l_k$
    and $M^l_k = 0$ if $l<p$.
\end{proof}

We now consider some examples of this construction.

\begin{ex} \label{expurecm}
    Let $Z \subseteq \Omega$ be a Cohen-Macaulay variety, i.e., if $\codim Z = p$,
    then $\Ok_\Omega/\mathcal{I}_Z$ has a free resolution of length $p$.
    Let $\mathcal{J} \subseteq \Ok_Z$ be an ideal with a lifting $\mathcal{\widehat{J}}$
    of $\mathcal{J}$ to $\Ok_\Omega$ such that if $\codim Z(\mathcal{J}) = q$ in $Z$,
    then $\mathcal{\widehat{J}}$ is a Cohen-Macaulay ideal of codimension $q$ in $\Omega$.
    Note that we want to take the lifting $\mathcal{\widehat{J}}$ to be as small as possible,
    in contrast to Theorem~\ref{thmmain1}, where we take the largest lifting.

    One example is when $\mathcal{J} = \mathcal{J}(f_1,\dots,f_q)\subseteq \Ok_Z$
    is a complete intersection ideal.

    With these conditions, we can apply Proposition~\ref{proptensorprodcomplexes} to the ideals
    $\mathcal{I}_Z$ and $\mathcal{\widehat{J}}$, so the tensor product complex $(E\otimes F,\varphi\otimes\psi)$
    is a free resolution of $\Ok_\Omega/(\mathcal{I}_Z + \mathcal{\widehat{J}}) = \Ok_\Omega/\mathcal{\tilde{J}}$,
    where $(E,\varphi)$ and $(F,\psi)$ are free resolutions of $\Ok_\Omega/\mathcal{\widehat{J}}$ and
    $\Ok_\Omega/\mathcal{I}_Z$ respectively.

    Since $i_* \omega_Z = R^F\wedge dz$ and $i_* R^\mathcal{J}_Z = R^{E\otimes F}\wedge dz$, we thus get by
    \eqref{eqREFprod} that
    \begin{equation*}
        i_* R^\mathcal{J}_Z = \left.i_*( (I_E -\nabla_\varphi(|G|^{2\lambda} u^E))\wedge \omega_Z)\right|_{\lambda=0}.
    \end{equation*}
    Since $Z$ is Cohen-Macaulay, $\omega_Z=\omega_0$, so $\nabla_\varphi(\omega_Z) = -\dbar\omega_Z = 0$
    (note the $\varphi$, not $\psi$), since $\dbar \omega_0 = \psi_{p+1}\omega_1 = 0$. In addition,
    $i_*$ is injective on currents on $Z$, so
    \begin{equation} \label{eqRJCM}
        R^\mathcal{J}_Z = \omega_Z -\nabla_\varphi(\left.|G|^{2\lambda} u^E \wedge \omega_Z\right|_{\lambda=0}).
    \end{equation}

    From \eqref{eqRJCM}, we can see that the current $R^f_Z$ we defined in Proposition~\ref{propprincipalonhyper}
    in the introduction is the current given by Theorem~\ref{thmmain1}. When $Z$ is a reduced hypersurface
    defined by $h$, then $R^Z = \dbar(1/h)$, so the structure form $\omega_Z$ becomes just the Poincar\'e residue
    of $dz/h$ on $Z$. In addition, the free resolution $(E,\varphi)$ of $\Ok/\mathcal{J}(f)$ becomes just
    the complex $\Ok \stackrel{(f)}{\to} \Ok$. Hence,
    \begin{equation*}
        R^{\mathcal{J}(f)}_Z = \omega_Z - (f-\dbar)\left(\frac{1}{f}\omega_Z\right)
        = \dbar\left(\frac{1}{f}\omega_Z\right).
    \end{equation*}
\end{ex}

The structure form $\omega_Z$ here plays a bit similar role as in \cite{AS2}.
In \cite{AS2}, for example $\dbar$-closedness for a current $T \in \mathcal{W}_Z$
is expressed as $\dbar (T\wedge \omega_Z) = 0$, not just $\dbar T = 0$.
In the case of $(0,0)$-currents, $\dbar$-closed currents in this sense
become just holomorphic functions, i.e., as expected from the smooth case,
while there can exist $\dbar$-closed $(0,0)$-currents in the usual sense which are not
holomorphic functions when $Z$ is singular. Here, we get that the annihilator of $\dbar(1/f)$ might be
larger than the ideal generated by $f$, while adding $\omega_Z$, the annihilator of
$\dbar( (1/f)\omega_Z)$ becomes exactly $f$.

We finish this section with an example not covered by Example~\ref{expurecm}.

\begin{ex}
    Consider the cusp $Z = \{ z^3 - w^2 = 0 \} \subseteq \C^2$, and the maximal ideal at $0$,
    $\mathfrak{m} = \mathcal{J}(z,w) \subseteq \Ok_Z$. Note that since $z^3 - w^2 \in \mathcal{J}(z,w)$,
    the maximal lifting of $\mathfrak{m}$ to $\Ok = \Ok_{\C^2}$ equals
    $\mathfrak{\tilde{m}} = \mathcal{J}(z,w) \subseteq \Ok$.
    It is easily verified that the morphism $a : (F,\psi) \to (E,\varphi)$ from \eqref{eqadef}, where
    $(F,\psi)$ and $(E,\varphi)$ are free resolutions of $\Ok/\mathcal{I}_Z$ and
    $\Ok/\mathfrak{\tilde{m}}$, becomes
    \begin{equation} \label{eqcuspmaxideal}
    \xymatrix{
        0 \ar[r] & \Ok \ar[r]^{\varphi_2} & \Ok^{\oplus 2} \ar[r]^{\varphi_1} & \Ok \ar[r]
        &\Ok/\mathfrak{\tilde{m}} \ar[r] & 0 \\
        & 0 \ar[r] & \ar[u]^{a_1} \Ok \ar[r]^{\psi_1}& \Ok \ar[u]^{a_0}  \ar[r] &
        \Ok/\mathcal{I}_Z \ar[u]^{\pi}\ar[r] & 0,
    }
    \end{equation}
    where
    \begin{equation*}
        \varphi_2 = \left( \begin{array}{c} -w \\ z \end{array} \right) \text{, }
            \varphi_1 = \left( \begin{array}{cc} z & w \end{array} \right) \text{, }
                \psi_1 = \left( \begin{array}{c} z^3-w^2 \end{array} \right) \text{ and }
                a_1 = \left( \begin{array}{c} z^2 \\ -w \end{array} \right).
    \end{equation*}
    Choosing the trivial metric on $E$, the minimal right-inverse $\sigma_2$ of $\varphi_2$ is
    $( \begin{array}{cc} -\bar{w} & \bar{z} \end{array} )/\left(|z|^2 + |w|^2\right)$.
    Since $Z$ is a reduced hypersurface defined by $z^3-w^2$, the structure form $\omega$
    becomes $2\pi i dz/(2w)|_{Z}$ as in Example~\ref{expresidue}. 

    We let $\tau : \C \to Z$, $\tau(t) = (t^2,t^3)$, which is a smooth modification of $Z$
    (in fact, it is the normalization of $Z$). Then, one can verify that $\tau^*(\sigma_2a_1) = -t$,
    and since $\tau^*(dz/(2w)) = dt^2/(2t^3) = dt/t^2$, we get that $\tau^*(V^E a \omega) = -2\pi idt/t$.
    Thus, 
    \begin{equation*}
        V^E a \omega = -2\pi i\tau_* (dt/t)
    \end{equation*}
    (since $\tau_* \tau^* = \Id$ for currents with the SEP on $Z$, where $\tau$ is a modification).
    Since $\supp R^\mathfrak{m}_Z \subseteq Z(\mathfrak{m}) = \{ 0 \}$, we get by the
    dimension principle that $R^\mathfrak{m}_Z = -\dbar(V^Ea_1\omega)$, since the right-hand
    side here is the only part of $R^\mathfrak{m}_Z$ as defined by \eqref{eqrpuredef} of
    bidegree $(*,1)$ on $Z$. Thus,
    \begin{equation*}
        R^{\mathfrak{m}}_Z = 2\pi i\dbar\tau_*\left(\frac{dt}{t}\right) =
        2\pi i\tau_*\left(\dbar\left(\frac{dt}{t}\right)\right) = \tau_* ((2\pi i)^2[0]) = (2\pi i)^2[0].
    \end{equation*}
    This could also have been seen directly in this case from \eqref{eqRJpushforward}, since
    \begin{equation*}
      R^{\mathfrak{\tilde{m}}}_{\C^2} = \dbar(1/w)\wedge\dbar(1/z)\wedge dz \wedge dw = (2\pi i)^2[0].
    \end{equation*}
    Note that since $\tau^*(dz/(2z)) = dt/t$, we can also express this as
    \begin{equation*}
        R^{\mathfrak{m}}_Z = \dbar\left( 2\pi i\left.\frac{dz}{2z}\right|_{Z}\right).
    \end{equation*}
\end{ex}

\section{The construction in the case that $Z$ is smooth} \label{sectsmooth}

Note the similarity of the definition of $R^\mathcal{J}_Z$ in \eqref{eqRJX} and \eqref{eqrpuredef}.
In fact, it is easy to see that if $Z = \Omega \subseteq \Cn$, then the definitions of $R$ from
\eqref{eqrdef} and \eqref{eqrpuredef} coincide, since then, $(F,\psi)$ becomes just
$F_0 \cong \Ok$, and $a = a_0$ is the isomorphism $a_0 : F_0 \cong \Ok \cong E_0$,
$V^E = U^E$ and $\omega_Z = dz$.
In fact, even more holds.
\begin{prop} \label{propsubmanifold}
    Let $Z$ be a smooth subvariety of $\Omega$.
    Let $\mathcal{J}$ be an ideal in $\Ok_Z$.
    Then $R^\mathcal{J}_Z$ for an ideal $\mathcal{J} \subseteq \Ok_Z$ defined intrinsically
    on $Z$ as in \eqref{eqRJX} as the current associated to a free resolution on $Z$
    coincides with the current defined in \eqref{eqrpuredef}.
\end{prop}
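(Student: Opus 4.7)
The plan is to make compatible choices of free resolutions so that \eqref{eqrpuredef} and \eqref{eqRJX} visibly coincide. Since the statement is local and $Z$ is smooth, I may assume that $Z=\{z''=0\}$ with $z''=(z_{d+1},\ldots,z_n)$; then $\mathcal{I}_Z$ is generated by the regular sequence $z''$.

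For the definition \eqref{eqrpuredef}, I take $(F,\psi)$ to be the Koszul complex of $z''$, so $R^{\mathcal{I}_Z}$ is the Coleff--Herrera product of $z''$ (Theorem~\ref{thmbmch}) and the structure form is $\omega_Z=\pm\,dz'|_Z\otimes\xi$, where $dz'=dz_1\wedge\cdots\wedge dz_d$ and $\xi$ is a top Koszul basis vector. Then I pick a minimal free resolution $(e,\varphi')$ of $\Ok_Z/\mathcal{J}$ on $Z$, lift each matrix $\varphi'_k$ to $\tilde{\varphi}'_k$ over $\Omega$, and let $(E,\varphi)$ be the total complex of the double complex $\tilde{e}\otimes F$ with differential $\tilde{\varphi}'\otimes 1\pm 1\otimes\psi$. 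Because $(F,\psi)$ is a Koszul resolution of the regular sequence defining $Z$, the total complex is a free resolution of $\Ok_\Omega/\mathcal{\tilde{J}}$, and the morphism $a:(F,\psi)\to(E,\varphi)$ from \eqref{eqadef} is realized as the column inclusion $F_\bullet=\tilde{e}_0\otimes F_\bullet\hookrightarrow E_\bullet$.

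Equipping $E$ with the tensor-product Hermitian metric, I would then compute $V^E\wedge a\omega_Z$ on $Z$. Two observations reduce it to the intrinsic expression: first, $a\omega_Z$ already sits in the terminal Koszul slot $\tilde{e}_0\otimes F_p$, so the ``Koszul direction'' of $\sigma^E$ contributes nothing to it; second, the restriction to $Z$ of the $\tilde{e}$-direction of the minimal right inverse $\sigma^E$ equals $\sigma^e$, because $\tilde{\varphi}'|_Z=\varphi'$. Combining these, together with the analogous behavior of $\dbar\sigma^E$, gives $i^*(V^E\wedge a\omega_Z)=U^e\wedge dz'|_Z\otimes\xi$, so that
\begin{equation*}
a\omega_Z-\nabla(V^E\wedge a\omega_Z)=\bigl(dz'|_Z-\nabla(U^e\wedge dz'|_Z)\bigr)\otimes\xi,
\end{equation*}
which under the identification $\tilde{e}_k\otimes\xi\cong e_k$ on $Z$ is precisely the intrinsic $R^\mathcal{J}_Z$ of \eqref{eqRJX}.

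The main obstacle is the tensor-product bookkeeping: although the minimal right inverse of a tensor product complex is in general not the tensor product of the right inverses, here $a\omega_Z$ lies in a specific slot which suppresses the genuinely mixed pieces. Any spurious mixed terms that do arise are almost semi-meromorphic and pseudomeromorphic of bidegree too small for their support, so the dimension principle (Proposition~\ref{proppmdim}) and the SEP of the almost semi-meromorphic factors kill them, leaving precisely the clean identification above.
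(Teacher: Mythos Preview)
Your setup---taking $(F,\psi)$ to be the Koszul complex of the transverse coordinates and $(E,\varphi)$ to be the tensor product of a lifted intrinsic resolution with $(F,\psi)$---is exactly the paper's choice. The paper then argues slightly differently: it invokes the already-proved push-forward formula \eqref{eqRJpushforward} to get $i_*R=R^{\widehat{E}\otimes F}\wedge dz\wedge dw$, and reduces to proving $R^{\widehat{E}\otimes F}=R^{\widehat E}\otimes R^F$ (Lemma~\ref{lmaproduct}), which in turn is reduced to the $m=1$ case and an explicit computation of the $M$-current (Lemma~\ref{lmaproduct2}). Your route---computing $V^E\wedge a\omega_Z$ directly on $Z$ and identifying it with $U^e\wedge dz'|_Z\otimes\xi$---is a legitimate shortcut that avoids the push-forward detour, and the two approaches share the same technical core.

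The gap is in your ``second observation.'' The assertion that the $\tilde e$-direction of $\sigma^E$ restricts on $Z$ to $\sigma^e$ is correct, but the reason is not merely that $\tilde\varphi'|_Z=\varphi'$; that alone says nothing about how the mixed blocks of $\sigma^E$ behave. The point is rather that the Koszul differential $\psi=\sum w_j\,\iota_{e_j}$ and its adjoint $\psi^*=\sum\bar w_j\,e_j\wedge$ both vanish identically on $Z=\{w=0\}$. Hence on $Z$ the maps $\eta_k$, $\eta_k^*$ and $\eta_{k+1}\eta_{k+1}^*+\eta_k^*\eta_k$ are block-diagonal with respect to the grading by $F_j$, and the explicit formula $\sigma^E_k=(\eta_{k+1}\eta_{k+1}^*+\eta_k^*\eta_k)^{-1}\eta_k^*$ gives $\sigma^E_k|_Z=\bigoplus_i(\sigma^e_i\otimes 1_{F_j})$ on the nose, wherever both sides are smooth. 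This is precisely what the paper establishes (for $m=1$, then by induction) through the formulas \eqref{eqsigmaexpr}--\eqref{eqsigmagexpr2}; once you state it this way your identity $V^E\wedge a\omega_Z=U^e\wedge dz'|_Z\otimes\xi$ follows by SEP, since both sides are almost semi-meromorphic and agree on $Z\setminus Z(\mathcal J)$. Your final paragraph about the dimension principle killing ``spurious mixed terms'' is then unnecessary (and somewhat misleading): on $Z$ there simply are no mixed terms.
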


In particular, it is motivated to use the same notation $R^\mathcal{J}_Z$ for both
the currents defined by \eqref{eqRJX} and \eqref{eqrpuredef}.

A key part in proving Proposition~\ref{propsubmanifold} is the following lemma
about currents associated to specific product complexes.

\begin{lma} \label{lmaproduct}
    Let $\mathcal{J} \subset \Ok_Z$ be an ideal, where $Z$ is the smooth subvariety
    of $\Omega \subset \Cn_z \times \C^m_w$ defined by $Z = \{ w_1 = \dots = w_m = 0 \}$.
    Let $(E,\varphi)$ be a free resolution of $\Ok_Z/\mathcal{J}$, with $E_0 \cong \Ok_Z$,
    and let $(\widehat{E},\widehat{\varphi}) = (\pi^* E,\pi^* \varphi)$,
    where $\pi : \Cn_z \times \C^m_w \to \Cn_z$ is the projection.
    Let $(F,\psi)$ be the Koszul complex of $(w_1,\dots,w_m)$.
    Then,
    \begin{equation*}
        R^{\widehat{E}\otimes F} = R^E \otimes R^F.
    \end{equation*}
\end{lma}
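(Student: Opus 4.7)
The plan is to establish the product formula by identifying $R^{\widehat{E}\otimes F}$ with $R^{\widehat{E}} \otimes R^F$ (which equals $R^E \otimes R^F$ under the identification $R^{\widehat{E}} = \pi^* R^E$) via a direct computation of the associated $U$-form. Equip $E$ with any Hermitian metric, give $\widehat{E} = \pi^* E$ the pulled-back metric, equip $F$ with its standard trivial metric, and take on $\widehat{E}\otimes F$ the induced product metric prescribed at the end of Section~\ref{ssecttensprod}. Since $\widehat{\varphi} = \pi^*\varphi$ only involves $z,\bar z$ while $\psi$ only involves $w,\bar w$, we have $\sigma^{\widehat{E}} = \pi^*\sigma^E$, and therefore $u^{\widehat{E}} = \pi^* u^E$, $U^{\widehat{E}} = \pi^* U^E$, $R^{\widehat{E}} = \pi^* R^E$; the variables of the $\widehat{E}$-valued and $F$-valued currents are thus ``transverse'', so all tensor products of such currents below are well-defined through Proposition~\ref{propprodasm}.

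The central identity I aim to establish is
\begin{equation*}
    U^{\widehat{E}\otimes F} = U^{\widehat{E}} \otimes I_F + R^{\widehat{E}} \otimes U^F.
\end{equation*}
Granting this, a direct computation using $\nabla_{\End}U^{\widehat{E}} = I_{\widehat{E}} - R^{\widehat{E}}$, $\nabla_{\End}U^F = I_F - R^F$, $\nabla_{\End}R^{\widehat{E}} = 0$, and the graded Leibniz rule for $\nabla_{\End}$ on tensor products of endomorphisms yields (with signs dictated by the grading)
\begin{equation*}
    \nabla_{\End}\bigl(U^{\widehat{E}}\otimes I_F + R^{\widehat{E}}\otimes U^F\bigr) = I_{\widehat{E}\otimes F} - R^{\widehat{E}}\otimes R^F,
\end{equation*}
so \eqref{eqrdef} forces $R^{\widehat{E}\otimes F} = R^{\widehat{E}}\otimes R^F = R^E \otimes R^F$.

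To prove the identity, I would first verify it away from the singular variety $Z(\mathcal{J})\times\{0\}$ of $\widehat{E}\otimes F$, where both sides are smooth. On the open region $\{z \notin Z(\mathcal{J})\}$ the complex $\widehat{E}$ is pointwise exact and $R^{\widehat{E}} = 0$, and an explicit description of the minimal $\sigma^{\widehat{E}\otimes F}$ in terms of $\sigma^{\widehat{E}},\sigma^F$ (using independence of the $z$- and $w$-variables) reduces $u^{\widehat{E}\otimes F}$ to $u^{\widehat{E}}\otimes I_F$; symmetrically on $\{w\neq 0\}$ one checks the contribution of $R^{\widehat{E}}\otimes u^F$. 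To remove the discrepancy on $Z(\mathcal{J})\times\{0\}$, I pass to the analytic continuation $|H|^{2\lambda}(\cdot)|_{\lambda=0}$ defining $U^{\widehat{E}\otimes F}$, choosing $|H|^2 = |H_z|^2 + |w|^2$ so that the regularization factorizes compatibly with the splitting of variables, and invoke the SEP of both sides with respect to the product structure.

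The main obstacle will be the explicit identification of the minimal right inverse $\sigma^{\widehat{E}\otimes F}$ of $\widehat{\varphi}\otimes\psi$ under the product metric, together with the careful sign bookkeeping in the graded tensor product needed to match $u^{\widehat{E}\otimes F}$ with $u^{\widehat{E}}\otimes I_F + R^{\widehat{E}}\otimes u^F$ off the singular set. Were $\mathcal{J}$ Cohen-Macaulay with $(E,\varphi)$ of minimal length, the second part of Proposition~\ref{proptensorprodcomplexes} would deliver \eqref{eqREFprod}, which is essentially this product formula; the approach above is designed to handle the general (possibly non Cohen-Macaulay) case required in the lemma.
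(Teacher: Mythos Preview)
Your central identity
\[
    U^{\widehat{E}\otimes F} = U^{\widehat{E}}\otimes I_F + R^{\widehat{E}}\otimes U^F
\]
is false, already at the level of smooth forms on the set where everything is regular, so the rest of the plan cannot be carried out. Take the simplest possible case: $n=m=1$, $\mathcal{J}=(f)$ with $f$ a nonzero holomorphic function of $z$, so that $(E,\varphi)$ and $(F,\psi)$ are both Koszul complexes of a single function and $\widehat{E}\otimes F$ is the Koszul complex of $(f,w)$. With the product metric one gets
\[
    u^{\widehat{E}\otimes F}_1 = \frac{\bar f\,e_1 + \bar w\,e_2}{|f|^2+|w|^2},
\]
whereas $u^{\widehat{E}}_1\otimes I_F = (\bar f/|f|^2)\,e_1$; these differ as smooth forms at every point with $w\neq 0$, and the discrepancy persists after taking the principal-value extension. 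The underlying reason is visible in the paper's explicit computation of $\sigma^{\widehat{E}\otimes F}$ (equations \eqref{eqeta1}--\eqref{eqsigmagexpr2}): one obtains $\sigma^E_{k,w}$, the inverse perturbed by $|w|^2$, not $\sigma^E_k$ itself, together with an off-diagonal $\bar w$ term. So the product-metric inverse genuinely mixes the $z$ and $w$ data, and there is no clean factorization of $u^{\widehat{E}\otimes F}$.

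The paper proceeds quite differently. It first reduces to $m=1$ by induction, then applies the comparison formula, Theorem~\ref{thmRcomparisonIdeals}, to the natural morphism $a:(F,\psi)\to(\widehat{E}\otimes F,\widehat{\varphi}\otimes\psi)$, which yields $R^{\widehat{E}\otimes F}=a_1R^F-\nabla M$. The substantive work (Lemma~\ref{lmaproduct2}) is the identification $M=-U^E\wedge a_1 R^F$, proved by an induction on the level $k$ that uses the explicit block form \eqref{eqsigmagexpr2} of $\sigma^G$, the identity \eqref{eqsigmawexpr2} showing $\sigma^E_{k,w}=\sigma^E_k+|w|^2\alpha$ (so the $|w|^2$-error is killed by $\dbar(1/w)$), and the dimension principle to pass from the generic locus to the whole space. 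In short, the paper never claims a product formula for $U^{\widehat{E}\otimes F}$; it only needs one for $M$, and even that is obtained component by component via the dimension principle rather than by a direct smooth identity.
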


We begin by proving Proposition~\ref{propsubmanifold} using this lemma,
and then come back to the proof of Lemma~\ref{lmaproduct}.

\begin{proof}[Proof of Proposition~\ref{propsubmanifold}]
    We assume that locally, $Z = \{ w_1 = \dots = w_m = 0 \} \subseteq \C^n_z \times \C^m_w$,
    i.e., $z$ are local coordinates on $Z$ and $\mathcal{I}_Z = \mathcal{J}(w_1,\dots,w_m)$.
    We let $R$ be the current $R^\mathcal{J}_Z$ defined by \eqref{eqrpuredef},
    and let $R'$ be the current $R^\mathcal{J}_Z$ defined by \eqref{eqRJX}.
    We let $\mathcal{\widehat{J}} = \pi^* \mathcal{J} = \Ok_{\C_z^n\times \C^m_w} \otimes_{\Ok_{\C^n_z}} \mathcal{J}$
    be the ideal $\mathcal{J}$ considered as an $\Ok = \Ok_{\C^n_z\times\C^m_w}$-module.
    We also let $(\widehat{E},\widehat{\varphi}) := (\pi^*E,\pi^*\varphi)$,
    where $(E,\varphi)$ is a free resolution of $\Ok_{\C^n_z}/\mathcal{J}$ and $\pi : \C^n_z\times\C^m_w \to \C^n_z$
    is the projection. Then $(\widehat{E},\widehat{\varphi})$
    is a free resolution of $\Ok_{\C^n_z\times\C^m_w}/\mathcal{\widehat{J}}$ since $\Ok_{\C^n_z\times\C^m_w}$
    is a flat $\Ok_{\C^n_z}$-module, see \cite{Fis}, Proposition~3.17.

    Let $(F,\psi)$ be the Koszul complex of $(w_1,\dots,w_m)$, which is a free resolution of $\Ok/\mathcal{I}_Z$.
    The maximal lifting $\mathcal{\tilde{J}}$ of $\mathcal{J}$ equals $\mathcal{\widehat{J}} + \mathcal{I}_Z$,
    so by Proposition~\ref{proptensorprodcomplexes}, $(\widehat{E}\otimes F,\widehat{\varphi}\otimes \psi)$
    is a free resolution of $\Ok/\mathcal{\tilde{J}}$.
    Thus, by \eqref{eqRJpushforward},
    \begin{equation*}
        i_* R = R^{\widehat{E}\oplus F}\wedge dz\wedge dw,
    \end{equation*}
    and by Lemma~\ref{lmaproduct},
    \begin{equation*}
        i_* R = R^{E} \otimes R^F \wedge dz\wedge dw.
    \end{equation*}
    By Theorem~\ref{thmbmch}, $R^F = \mu^w \wedge e_1\wedge\dots\wedge e_m$,
    and by the Poincar\'e-Lelong formula, $\mu^w \wedge dw = (2\pi i)^m [ w = 0 ]$,
    so
    \begin{equation*}
        i_* R = c R^E \otimes [w = 0] \wedge e_1\wedge\dots\wedge e_m,
    \end{equation*}
    for some non-zero constant $c$.
    Note also that $i_* R' = R^E \wedge [w=0]$, so $i_* R' = i_* R$,
    (up to $c e_1\wedge \dots \wedge e_m$), i.e., $R' = R$
    (up to isomorphism).
\end{proof}

The proof of Lemma~\ref{lmaproduct} follows easily from the following lemma.
\begin{lma} \label{lmaproduct2}
    Using the notation of Lemma~\ref{lmaproduct}, and assuming that $m=1$,
    and $M$ is the current in \eqref{eqmdef} obtained by applying the comparison
    theorem to the morphism $a : (F,\psi) \to (\widehat{E}\otimes F,\widehat{\varphi}\otimes \psi)$, then
    \begin{equation*}
        M = -U^E \wedge a_1 R^F.
    \end{equation*}
\end{lma}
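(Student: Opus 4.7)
The plan is to compute $M$ directly by exploiting that $F$ is the Koszul complex of the single function $w$ and that $\widehat{E}=\pi^*E$ depends only on $z$. First I identify the chain map $a$: since $(\widehat{E}\otimes F)_0=\Ok$ and $(\widehat{E}\otimes F)_1=\widehat{E}_1\oplus F_1$, the natural lifting of the surjection $\Ok/\mathcal{I}_Z\to\Ok/\mathcal{\tilde J}$ has $a_0=\Id$ and $a_1$ equal to the inclusion of $F_1$ as the second summand, so $au^F$ sits entirely in the $F_1$-factor of $(\widehat{E}\otimes F)_1$.

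Next I compute the minimal right inverse of $\widehat{\varphi}\otimes\psi$ for the product metric. Writing $(\widehat{E}\otimes F)_k=\widehat{E}_k\oplus\widehat{E}_{k-1}$, the differential is block upper-triangular with $\widehat{\varphi}$ on the diagonal and a $\pm w$ off-diagonal piece. A direct computation gives $\sigma^{\widehat{E}\otimes F}_k$ as a block matrix whose diagonal blocks are $\widehat{\sigma}_k$ and $\widehat{\sigma}_{k-1}$, with an off-diagonal correction carrying an extra factor of $\bar w$. Iteratively expanding $u^{\widehat{E}\otimes F}\wedge au^F$ and using that $\widehat{\sigma}$ and $\sigma^F=1/w$ act in independent variable directions, the purely diagonal contributions assemble precisely to $u^{\widehat{E}}\wedge a_1u^F$, while every other contribution contains at least one extra factor of $w$.

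To dispose of these correction terms I take $G$ to vanish on $\{w=0\}$ together with the singular locus of $\widehat{E}$. Then $\left. w\,\dbar|G|^{2\lambda}\right|_{\lambda=0}=0$, which follows from the vanishing $\lambda$-factor in $\dbar|G|^{2\lambda}$ together with the support of the limit current lying in $\{G=0\}\supseteq\{w=0\}$. Hence all correction terms vanish after analytic continuation, leaving
\[
M=\left.\dbar|G|^{2\lambda}\wedge u^{\widehat{E}}\wedge a_1u^F\right|_{\lambda=0}.
\]

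Finally, $u^{\widehat{E}}$ is smooth off $\{G=0\}$, so I can commute $\dbar|G|^{2\lambda}$ past $u^{\widehat{E}}$ at the cost of a sign arising from the odd form-degrees of its components. The remaining factor $\left.\dbar|G|^{2\lambda}\wedge u^F\right|_{\lambda=0}$ reduces to $R^F$, because only the $d\bar w$-component of $\dbar G$ produces residue behaviour at $\{w=0\}$, while the other components contribute holomorphic multipliers that can be absorbed into the $|G|^{2\lambda}$ prefactor. Keeping track of the sign then yields $M=-U^E\wedge a_1R^F$. The main obstacle is producing the block-matrix description of $\sigma^{\widehat{E}\otimes F}$ in sufficiently clean form to separate the diagonal and correctional contributions, together with the careful sign-tracking in the last step.
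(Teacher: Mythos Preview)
Your block computation of $\sigma^{\widehat E\otimes F}_k$ is not correct, and this is where the argument breaks. With the product metric one finds (cf.\ \eqref{eqeta1}--\eqref{eqsigmagexpr2}) that the lower-right block is
\[
\sigma^E_{k-1,w}:=(|w|^2+\varphi_k\varphi_k^*+\varphi_{k-1}^*\varphi_{k-1})^{-1}\varphi_{k-1}^*,
\]
not $\widehat\sigma_{k-1}$. One has $\sigma^E_{k-1,w}=\sigma^E_{k-1}+|w|^2\alpha$ only for $z\notin Z^E_{k-1}$, and the remainder $\alpha$ is \emph{singular} along $Z^E_{k-1}$. Hence the ``correction terms'' in your expansion are not of the form $\bar w\cdot(\text{smooth})$; they carry genuine $z$-singularities. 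The vanishing argument you invoke, essentially ``$\bar w$ times the residue is zero'', would need the remaining factor to be smooth so that the only residue contribution comes from $\{w=0\}$. As stated, that fails: already for $n=1$, $E$ the Koszul complex of $z$, one checks that your intermediate identity
\[
M_2=\left.\dbar|G|^{2\lambda}\wedge u^{\widehat E}\wedge a_1u^F\right|_{\lambda=0}
\]
is false; the two sides differ by a term of the form $U^F\wedge R^E\neq 0$. Your final step then commits the opposite error (the current $\dbar|G|^{2\lambda}\wedge u^F|_{\lambda=0}$ is \emph{not} $R^F$ when $G$ also vanishes along $Z^E\times\C_w$), and the two mistakes happen to cancel in that example, but the reasoning is invalid.

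The paper avoids this by an induction on the level $k$ together with the dimension principle. At each stage one works \emph{outside} $Z^E_k\times\{0\}$, where $\sigma^E_{k,w}$ is genuinely smooth; there the $|w|^2\alpha$-term is smooth and annihilates $\dbar(1/w)$, giving the desired identity on the complement. Both sides are pseudomeromorphic $(0,k)$-currents and $Z^E_k\times\{0\}$ has codimension $\geq k+1$, so the dimension principle extends the equality across the bad set. That localization step, not a global $\lambda$-manipulation, is the missing idea.
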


\begin{proof}[Proof of Lemma~\ref{lmaproduct}]
    By induction, it is easily seen that it suffices to prove Lemma~\ref{lmaproduct}
    in the case when $m=1$.
    Let $a : (F,\psi) \to (\widehat{E}\otimes F,\widehat{\varphi}\otimes \psi)$
    be the morphism of complexes given by $a_k : F_k \to E_0 \otimes F_k$, $a_k(\xi) = 1\otimes \xi$.
    By the comparison formula,
    \begin{equation*}
        R^{\widehat{E}\otimes F} = a_1 R^F - \nabla M,
    \end{equation*}
    and since $\nabla(U^E) = I - R^E$ and $\nabla R^F = 0$,
    we get by Lemma~\ref{lmaproduct2} that
    \begin{equation*}
         R^{\widehat{E}\otimes F}= a_1 R^F - (a_1 R^F - R^E \wedge a_1 R^F) = R^E \wedge a_1 R^F.
    \end{equation*}
    Since $a_1 \xi = 1\otimes \xi$, we get that
    \begin{equation*}
        R^{\widehat{E}\otimes F} = R^E \otimes R^F.
    \end{equation*}
\end{proof}

In order to prove Lemma~\ref{lmaproduct2}, we first need
to elaborate a bit on how the $\sigma$'s are defined, and
in particular, how the $\sigma^{\widehat{E}\otimes F}$'s
are related to the $\sigma^{\widehat{E}}$'s.

In general, for a generically exact complex $(G,\eta)$
of Hermitian vector bundles, $\sigma^G_k$, which is smooth outside of $Z^G_k$, can be defined
as
\begin{equation} \label{eqsigmaexpr}
    \sigma^G_k = (\eta_{k+1}\eta_{k+1}^* + \eta_k^*\eta_k)^{-1} \eta_k^*,
\end{equation}
where $\eta_k^*$ and $\eta_{k+1}^*$ are the adjoint morphisms
of $\eta_k$ and $\eta_{k+1}$ induced by the Hermitian metrics.
This can be seen by the fact that outside of $Z^G_k$, $\im \eta_k^*$ is the orthogonal
complement of $\im \eta_{k+1}$, and the restrictions of $\eta_{k+1}\eta_{k+1}^*$ and $\eta_k^*\eta_k$
to $\im \eta_{k+1}$ and $\im \eta_k^*$ are invertible, and one then easily verifies that
$\sigma^G_k$ defined by \eqref{eqsigmaexpr} is a minimal right inverse of $\eta_k$.

We begin by expressing the $\sigma^G$'s more explicitly when
$(G,\eta) = (\widehat{E} \otimes F, \widehat{\varphi}\otimes \psi)$,
and $(F,\psi)$ is the Koszul complex of $w$.

Then,
\begin{equation*}
    G_k = E_k \oplus E_{k-1} \text { for } k \geq 1 \text { and }
    G_0 = E_0.
\end{equation*}
(identifying $F_0 \cong F_1 \cong \Ok$), and
\begin{equation} \label{eqeta1}
    \eta_k = \left(
    \begin{array}{cc} \varphi_k & (-1)^{k-1} w \Id_{E_{k-1}} \\
        0 & \varphi_{k-1} \end{array} \right)
          \text{ for } k \geq 2 \text{ and }
        \eta_1 = \left( \begin{array}{cc} \varphi_1 & w \Id_{E_0}
        \end{array} \right).
    \end{equation}
One then calculates that
\begin{equation} \label{eqeta2}
    \eta_k^* \eta_k + \eta_{k+1}\eta_{k+1}^* =
    \left( \begin{array}{cc} \varphi_k^* \varphi_k + \varphi_{k+1}\varphi_{k+1}^* + |w|^2 & 0 \\
        0 & \varphi_k\varphi_k^* + \varphi_{k-1}^* \varphi_{k-1} + |w|^2 \end{array} \right).
\end{equation}
We can then use the following lemma to relate the $\sigma^G$'s with the $\sigma^E$'s.
\begin{lma}
    For $(G,\eta)$ a generically exact complex with $Z^G_{k+1} \subset Z^G_{k}$,
\begin{equation} \label{eqsigmawexpr}
    \sigma^G_{k,w} := (|w|^2 + \eta_{k+1}\eta_{k+1}^* + \eta_k^*\eta_k)^{-1} \eta_k^*
\end{equation}
depends smoothly on $(z,w)$ for $(z,w)$ outside of $Z^G_k\times \{ 0 \}$,
and
\begin{equation} \label{eqsigmawexpr2}
    \sigma^G_{k,w} = \sigma^G_k + |w|^2\alpha,
\end{equation}
for $z$ outside of $Z^G_k$, where $\alpha$ depends smoothly on $z$ and $w$.
\end{lma}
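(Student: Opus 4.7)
The plan is to analyze the self-adjoint, positive semi-definite endomorphism
$$
A_w := |w|^2 \Id_{G_k} + \eta_{k+1}\eta_{k+1}^* + \eta_k^*\eta_k
$$
of $G_k$ and show that it is invertible at every point in the complement of $Z^G_k \times \{0\}$. When $w \neq 0$ this is immediate, since the term $|w|^2 \Id$ makes $A_w$ strictly positive definite. The more delicate case is $w = 0$ with $z \notin Z^G_k$; here, by the assumption $Z^G_{k+1}\subset Z^G_k$, both $\eta_k$ and $\eta_{k+1}$ have optimal rank at $z$ and the complex $(G,\eta)$ is pointwise exact at level $k$, so one has the orthogonal decomposition $G_k = \im \eta_{k+1} \oplus \im \eta_k^*$, which coincides with $\ker \eta_k \oplus (\ker \eta_k)^\perp$. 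Setting $B := \eta_{k+1}\eta_{k+1}^* + \eta_k^*\eta_k$, the positivity identity $\langle Bv,v\rangle = \|\eta_{k+1}^* v\|^2 + \|\eta_k v\|^2$ gives $\ker B = \ker \eta_{k+1}^* \cap \ker \eta_k = (\im \eta_{k+1})^\perp \cap \ker \eta_k$, which is trivial by the above decomposition. Hence $B$ is invertible outside $Z^G_k$, and so is $A_w = B + |w|^2 \Id$ for every $w$.

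Smoothness of $\sigma^G_{k,w} = A_w^{-1}\eta_k^*$ on the complement of $Z^G_k\times\{0\}$ then follows from the standard fact that the inverse of a smoothly varying family of invertible self-adjoint operators is again smooth. For the expansion, on $\{z \notin Z^G_k\}$ we have $\sigma^G_k = B^{-1}\eta_k^*$ by the expression \eqref{eqsigmaexpr} applied to $G$; writing $A_w = B + |w|^2\Id$ and applying the resolvent identity
$$
A_w^{-1} - B^{-1} = B^{-1}(B - A_w)A_w^{-1} = -|w|^2 \, B^{-1} A_w^{-1},
$$
then multiplying on the right by $\eta_k^*$, one obtains
$$
\sigma^G_{k,w} = \sigma^G_k - |w|^2 \, B^{-1} A_w^{-1} \eta_k^*.
$$
Hence one may take $\alpha := -B^{-1} A_w^{-1} \eta_k^*$, which is smooth on $\{z \notin Z^G_k\}\times \C^m_w$ by the invertibility discussion above.

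The only real obstacle is verifying rigorously that $B$ is invertible outside $Z^G_k$; this is handled by the pointwise orthogonal decomposition argument above, which in turn rests on the pointwise exactness of $(G,\eta)$ at level $k$ guaranteed by the hypothesis $Z^G_{k+1} \subset Z^G_k$. Everything else is a routine manipulation with smoothly varying positive operators and the resolvent identity.
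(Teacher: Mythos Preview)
Your proof is correct and follows essentially the same route as the paper: both establish that $A_w = |w|^2\Id + \eta_{k+1}\eta_{k+1}^* + \eta_k^*\eta_k$ is positive definite (hence smoothly invertible) off $Z^G_k\times\{0\}$, and both derive \eqref{eqsigmawexpr2} from the resolvent identity $(B+|w|^2\Id)^{-1} = B^{-1} - |w|^2 B^{-1}(B+|w|^2\Id)^{-1}$. You supply more detail than the paper for the invertibility of $B$ at $w=0$, explicitly invoking the hypothesis $Z^G_{k+1}\subset Z^G_k$ to get pointwise exactness and the orthogonal splitting $G_k = \im\eta_{k+1}\oplus\im\eta_k^*$; the paper simply asserts positive definiteness.
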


\begin{proof}
The first statement follows from the fact that 
$|w|^2 + \eta_{k+1}\eta_{k+1}^* + \eta_k^*\eta_k$ is strictly
positive definite if $z \notin Z^G_k$ or $w \neq 0$,
and the inverse thus depends smoothly on $(z,w)$.

It thus remains to prove \eqref{eqsigmawexpr2}.
To begin with, if $A$ is an invertible matrix such that
$A$ and $A + \epsilon I$ are invertible, then
\begin{equation*}
    (A + \epsilon I)^{-1} = A^{-1} - \epsilon A^{-1}(A+\epsilon I)^{-1}.
\end{equation*}
In particular, if we let $A = \eta_{k+1}\eta_{k+1}^* + \eta_k^*\eta_k$
and $\epsilon = |w|^2$, then for $z \notin Z^G_k$, $A$ is positive definite,
so $A$ and $A+\epsilon I$ are invertible, and the inverses
depend smoothly on $(z,w)$, and thus \eqref{eqsigmawexpr2} follows by \eqref{eqsigmaexpr},
where
\begin{equation*}
    \alpha = -(\eta_{k+1}\eta_{k+1}^* + \eta_k^*\eta_k)^{-1}(\eta_{k+1}\eta_{k+1}^* + \eta_k^*\eta_k + |w|^2)^{-1}.
\end{equation*}
\end{proof}

By \eqref{eqeta1} and \eqref{eqeta2},
\begin{align} \label{eqsigmagexpr2}
    \sigma^{\widehat{E}\otimes F}_k &= \left( \begin{array}{cc} * & 0 \\ * & (\varphi_k\varphi_k^* + \varphi_{k-1}^* \varphi_{k-1} + |w|^2)^{-1} \end{array} \right)
        \left( \begin{array}{cc} * & 0 \\ * & \varphi_{k-1}^* \end{array} \right) \\
\nonumber            &= \left( \begin{array}{cc} * & 0 \\ * & \sigma^E_{k-1,w} \end{array} \right),
\end{align}
where we by $*$ mean that we ignore the entries in those positions.

\begin{proof}[Proof of Lemma~\ref{lmaproduct2}]
    Note that since $R^F = \dbar(1/w)\wedge e$, if write
    an element $\xi_1 \oplus \xi_2 \in G_k = E_k \oplus E_{k-1}$
    in vector form
    \begin{equation*}
        \xi_1 \oplus \xi_2 = \left( \begin{array}{c} \xi_1 \\ \xi_2 \end{array} \right),
    \end{equation*}
    then since
    \begin{equation*}
        a_1 (\xi \wedge e) = \left( \begin{array}{c} 0 \\ \xi \end{array} \right),
    \end{equation*}
    the lemma can be written as
    \begin{equation*}
        M = - \left( \begin{array}{c} 0 \\ U^E \end{array} \right) \wedge \dbar\frac{1}{w}.
    \end{equation*}
    We will prove by induction on $k$ that
    \begin{equation} \label{eqmexpr}
        M_{k+1} = \left( \begin{array}{c} 0 \\ U_k^F \end{array} \right) \wedge \dbar\frac{1}{w},
    \end{equation}
    for $k \geq 1$. We thus start with the case $k=1$. Then,
    \begin{equation*}
        M_2 = \dbar |F|^{2\lambda} \wedge \sigma_2^G a_1 \sigma_1^F |_{\lambda=0} =
        - \left.\left( \begin{array}{c} 0 \\ \sigma^E_{1,w} \end{array} \right) \dbar |F|^{2\lambda} \frac{1}{w} \right|_{\lambda=0} 
            = - \left( \begin{array}{c} 0 \\ \sigma^E_{1,w} \end{array} \right) \dbar \frac{1}{w}
    \end{equation*}
    outside of $Z_1^E\times \{ 0 \}$, where $\sigma^E_{1,w}$ is smooth.
    Since by \eqref{eqsigmawexpr2}, for $z \notin Z^E_1$, $\sigma^E_{1,w} = \sigma^E_1 + |w|^2\alpha$,
    and since the last term annihilates $\dbar(1/w)$, we thus get that
    \begin{equation*}
        M_2 = - \left( \begin{array}{c} 0 \\ \sigma^E_1 \end{array} \right) \dbar \frac{1}{w}
    \end{equation*}
    outside of $Z^E_1 \times \{ 0 \}$. For $z \notin Z^E_1$, $U^E_1$ is smooth and equals $\sigma^E_1$,
    so outside of $Z^E_1 \times \{ 0 \}$,
    \begin{equation} \label{eqm2expr}
        M_2 = - \left( \begin{array}{c} 0 \\ U^E_1 \end{array} \right) \dbar \frac{1}{w}.
    \end{equation}
    Now, both sides of \eqref{eqm2expr} are pseudomeromorphic $(0,1)$-currents,
    which coincide outside of $Z^E_1 \times \{ 0 \}$, which has codimension $\geq 2$,
    so by the dimension principle, they must coincide everywhere, and we have proven
    \eqref{eqmexpr} for $k = 1$.

    We now begin with the induction procedure, and assume that \eqref{eqmexpr} holds for $k$.
    Then,
    \begin{equation*}
        M_{k+2} = \dbar\sigma^G_{k+2}  M_{k+1}
            = - \dbar\sigma^G_{k+2}\left( \begin{array}{c} 0 \\ U^E_k \end{array} \right) \wedge \dbar \frac{1}{w},
    \end{equation*}
    outside of $Z^E_{k+1}$, where $\sigma^G_{k+2}$ is smooth.
    By \eqref{eqsigmagexpr2},
    \begin{equation*}
- \dbar\sigma^G_{k+2}\left( \begin{array}{c} 0 \\ U^E_k \end{array} \right) \wedge \dbar \frac{1}{w}
         = - \left( \begin{array}{c} 0 \\ \dbar\sigma^E_{k+1,w} U^E_k \end{array} \right) \wedge \dbar \frac{1}{w},
    \end{equation*}
    and as above, we get by \eqref{eqsigmawexpr2} that
    \begin{equation*}
- \left( \begin{array}{c} 0 \\ \dbar\sigma^E_{k+1,w} U^E_k \end{array} \right) \wedge \dbar \frac{1}{w}
         = - \left( \begin{array}{c} 0 \\ \dbar\sigma^E_{k+1} U^E_k \end{array} \right) \wedge \dbar \frac{1}{w},
     \end{equation*}
     outside of $Z^E_{k+1} \times \{ 0 \}$. For $z \notin Z^E_{k+1}$, $\dbar\sigma^E_{k+1}$ is smooth
     and $\dbar\sigma^E_{k+1} U^E_k = U^E_{k+1}$, so to conclude, we get that outside of $Z^E_{k+1} \times \{ 0 \}$,
     \begin{equation} \label{eqmkexpr}
         M_{k+2} 
         = - \left( \begin{array}{c} 0 \\ U^E_{k+1} \end{array} \right) \wedge \dbar \frac{1}{w}.
     \end{equation}
     Now, both sides of \eqref{eqmkexpr} are pseudomeromorphic $(0,k+1)$-currents, which
     coincide outside of $Z^E_{k+1} \times \{ 0 \}$, which has codimension $\geq k+2$,
     so they coincide everywhere by the dimension principle. Thus, we have proven
     by induction that \eqref{eqmexpr} holds for all $k$.
\end{proof}

\section{Currents with prescribed annihilator ideals on arbitrary varieties} \label{sectnonpure}

We will here consider the construction of a current $R^\mathcal{J}_Z$ with annihilator $\mathcal{J}$
on a variety $Z$ as in Section~\ref{sectpure}, but without the assumption of pure dimension,
i.e., $Z$ may consist of irreducible components of different dimensions.

The construction will be essentially the same as in the case of pure dimension,
but one needs to in a certain sense treat the components of $Z$ of different dimension separately,
and thus, treating the case of pure dimension separately should hopefully illustrate
the main ideas better, without needing to delve in to certain technicalities in
the general case.

To begin with, we note that on a variety which is not of pure dimension,
talking about the bidegree of a current does not have any meaning,
while the bidimension (i.e., the bidegree of the test forms it is acting on)
still does.
For example, considering the union $Z$ of a complex line and a complex plane
in $\C^3$, intersecting at the origin, then the integration current $[0]$
is a current on $Z$ of bidimension $(0,0)$. However, if we consider $[0]$ as a
current on the line, it would have bidegree $(1,1)$, while on the plane, it
would have bidegree $(2,2)$.
Note also that the bidimension of a current is preserved under push-forwards
under inclusions (in contrast to the bidegree in the case of pure dimension,
which increases by the codimension under push-forwards).
We will thus in this section need to reformulate statements in terms of bidimension
instead of bidegree of currents. For example, the dimension principle needs to be
formulated in the following natural form.
\begin{prop} \label{proppmdim2}
    If $T \in \PM(Z)$ is a current of bidimension $(c,d)$ with support on a variety $V$,
    and $\dim V < d$, then $T = 0$.
\end{prop}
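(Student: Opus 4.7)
The idea is to reduce Proposition \ref{proppmdim2} to the manifold version of the dimension principle (Proposition \ref{proppmdim}) by pushing $T$ forward into an ambient space. Since the statement is local and being zero is a local property, I would work on a small enough open set so that $Z$ is embedded as a closed subvariety of an open set $\Omega \subseteq \C^n$ via an inclusion $i : Z \to \Omega$.

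The first step is to check that $i_* T \in \PM(\Omega)$. Writing $T$ locally as a finite sum of terms $\pi_* \tau$, with $\tau$ an elementary current on some $\C^N$ and $\pi$ a composition of modifications and open inclusions landing in $Z$, the push-forward $i_* T$ becomes a sum of terms $(i \circ \pi)_* \tau$. Each such $i \circ \pi$ is a proper holomorphic map from a manifold into the manifold $\Omega$, and $\PM$ on a manifold is closed under push-forwards by such maps, so $i_* T$ is pseudomeromorphic on $\Omega$. Next, bidimension is preserved under $i_*$, so $i_* T$ has bidimension $(c,d)$ on $\Omega$, i.e., bidegree $(n-c, n-d)$. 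Its support is contained in $V \subseteq \Omega$, and the hypothesis $\dim V < d$ is exactly $\codim_\Omega V = n - \dim V > n - d$. Proposition \ref{proppmdim} applied on $\Omega$ then yields $i_* T = 0$, and since $i_*$ is injective on currents on $Z$ (as recalled in Section \ref{ssectcurrvar}), we conclude $T = 0$.

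The only point that requires mild care is the first step: closed embeddings are not among the morphisms explicitly listed in Definition \ref{defpm}, so one has to explicitly note that the composition $i \circ \pi$ is a proper holomorphic map from a manifold to the manifold $\Omega$, which is enough to place $i_* T$ in $\PM(\Omega)$ using the original manifold definition of pseudomeromorphicity. Everything else is bookkeeping about bidimension versus bidegree, and the translation $\dim V < d \iff \codim_\Omega V > n - d$.
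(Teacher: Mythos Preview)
Your reduction-to-the-ambient-manifold idea is natural, but the step you flag as needing ``mild care'' is actually the whole difficulty. With Definition~\ref{defpm} as stated, the admissible maps are compositions of modifications and open inclusions; the closed embedding $i : Z \hookrightarrow \Omega$ is neither, and the composite $i\circ\pi$ has image contained in the proper subvariety $Z\subsetneq\Omega$, so it cannot be realized as such a composition landing in $\Omega$. Your proposed fix --- that $\PM(\Omega)$ is closed under push-forward by proper holomorphic maps between manifolds --- is itself a nontrivial assertion not established here, and in any case the maps $\pi$ may contain open inclusions, which are not proper, so $i\circ\pi$ need not be proper either (the push-forward is still defined because elementary currents have compact support, but your stated justification does not apply). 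The paper explicitly chooses the narrow definition and notes that the wider one in \cite{AS2}, ``allowing more general push-forwards'', is not used; with that wider definition your argument would indeed go through.

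The paper's route sidesteps the question of whether $i_*T\in\PM(\Omega)$. It first proves directly on $Z$ the annihilation properties $\bar h\,T=0$ and $d\bar h\wedge T=0$ for any holomorphic $h$ vanishing on $\supp T$ --- the same verification as in the smooth case, carried out on push-forwards of elementary currents. These properties are inherited by $i_*T$, and then $i_*T=0$ is obtained by induction on $\dim V$, showing $i_*T=0$ on $V_{\rm reg}$ using local coordinates. Your argument can be repaired by establishing these annihilation properties for $i_*T$ and then invoking the standard argument on $\Omega$; but once you do that you are essentially following the paper's proof rather than applying Proposition~\ref{proppmdim} as a black box.
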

The proof works the same as in the smooth case, by first proving that
$\overline{h}T = 0$ and $d\overline{h}\wedge T = 0$ if $h$ is a holomorphic
function vanishing on $\supp T$. Then, if $i : Z \to \Omega \subseteq \C^n$
is a local embedding, one proves that $i_* T = 0$ by induction over
$\dim V$, by proving that $i_* T = 0$ on $V_{\rm reg}$ (considered as a subvariety
of $\Omega$).

The rest from Section~\ref{ssectasmpm} about restrictions of pseudomeromorphic
currents, the SEP and almost semi-meromorphic currents works the same as in the
case of pure dimension, as is assumed in \cite{AS2}.
However, one must make sure to interpret the SEP in the right way.
A pseudomeromorphic current $T$ has the SEP with respect to $Z$ if
${\bf 1}_V T = 0$ for all subvarieties $V$ of $Z$ of positive codimension.
By positive codimension, we mean that $\codim V\cap Z_i > \codim Z_i$
for all irreducible components $Z_i$ of $Z$. Note however, that this is not
the same as saying that $\codim V > \codim Z$, which for example any irreducible
component not of maximal dimension would satisfy.

The existence of the structure form $\omega_Z$ takes the following form.

\begin{prop} \label{propomeganonpure}
    Let $(F,\psi)$ be a Hermitian resolution of $\Ok_\Omega/\mathcal{I}_Z$,
    where $Z$ is a subvariety of $\Omega$ of not necessarily pure dimension.
    Let $R^{\mathcal{I}_Z}$ be the associated residue current of $(F,\psi)$,
    and let $W^e$ be the union of the irreducible components of $Z$ of dimension $e$.
    Then there exists an almost semi-meromorphic current
    \begin{equation*}
        \omega_Z = \omega^d + \dots + \omega^0
    \end{equation*}
    on $Z$, where $\dim Z = d$, $\omega^e$ has bidimension
    $(0,e)$, support on $\cup_{f \geq e} W^f$ and takes values in $F_{n-e}$, such that
    \begin{equation} \label{eqomegaRnonpure}
        i_* \omega_Z = R^{\mathcal{I}_Z} \wedge dz,
    \end{equation}
    where $i : Z \to \Omega$ is the inclusion and $dz := dz_1\wedge \dots \wedge dz_n$.
\end{prop}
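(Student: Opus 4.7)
The plan is to mirror the proof of Proposition~\ref{propomegapure} (i.e., Proposition~3.3 of \cite{AS2}), decomposing $R^{\mathcal{I}_Z}$ according to its $F$-valued components and indexing the resulting pieces of the structure form by the dimensions rather than codimensions of the components of $Z$. Write $R^{\mathcal{I}_Z} = \sum_k R^{\mathcal{I}_Z}_k$ where $R^{\mathcal{I}_Z}_k$ is $F_k$-valued and of bidegree $(0,k)$ on $\Omega$. For each $0 \leq e \leq d$, the natural candidate for $\omega^e$ is characterized by
\begin{equation*}
    i_* \omega^e = R^{\mathcal{I}_Z}_{n-e}\wedge dz.
\end{equation*}
To see that this genuinely defines a current on $Z$, one uses that $R^{\mathcal{I}_Z}$ is pseudomeromorphic with support in $Z$, is annihilated by $\mathcal{I}_Z$, and has the SEP on $Z$, which together imply (as in Section~\ref{ssectcurrvar}) that $R^{\mathcal{I}_Z}_{n-e}\wedge dz$ is the push-forward of a unique current on $Z$. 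Matching bidegrees then gives that $\omega^e$ has bidimension $(0,e)$ and takes values in $F_{n-e}$, and summing yields $i_*\omega_Z = R^{\mathcal{I}_Z}\wedge dz$.

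The support claim is then a direct consequence of the non-pure dimension principle, Proposition~\ref{proppmdim2}. If $x \in Z$ lies outside the closed set $\bigcup_{f \geq e} W^f$, then some ambient neighborhood $U$ of $x$ meets $Z$ only in $\bigcup_{f < e} W^f$, a set of dimension at most $e-1$. The restriction of $\omega^e$ to $U \cap Z$ is then a pseudomeromorphic current of bidimension $(0,e)$ with support of dimension strictly less than $e$, hence vanishes by Proposition~\ref{proppmdim2}. Therefore $x \notin \supp \omega^e$, so $\supp \omega^e \subseteq \bigcup_{f\geq e} W^f$.

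The main technical obstacle is to establish that $\omega^e$ is almost semi-meromorphic on $Z$. Following the approach of \cite{AS2} in the pure-dimensional setting, the idea is to pass to a smooth modification $\pi: \tilde Z \to Z$ on which the relevant factors $\sigma_j^F$ and $\dbar\sigma_j^F$ pull back as honestly semi-meromorphic currents, compute the pullback of $\omega^e$ there, and push down by $\pi_*$. What is new compared to the pure-dimensional case is that a single modification must simultaneously resolve each component $W^f$ and handle the mutual intersections of different-dimensional strata; here the codimension information recalled in Section~\ref{ssectbef}, and in particular the refinement \eqref{eqzkprim} together with the Buchsbaum--Eisenbud description of $Z^F_k$, is what ensures that the correction terms from these intersections have codimension large enough not to spoil almost semi-meromorphicity. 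Uniqueness of the SEP extension then identifies the resulting current with the previously defined $\omega^e$, completing the proof.
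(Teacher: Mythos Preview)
Your proposal has a genuine gap in the step you yourself flag as the ``main technical obstacle'': establishing that $\omega^e$ is almost semi-meromorphic. You define $\omega^e$ by the requirement $i_*\omega^e = R^{\mathcal{I}_Z}_{n-e}\wedge dz$ and then try to argue after the fact that this current happens to be almost semi-meromorphic by pulling it back to a modification. But currents do not pull back, and there is no black-box principle that says a current characterized in this way must be almost semi-meromorphic; that property has to come from an explicit construction. Your paragraph ``pass to a smooth modification \dots\ compute the pullback of $\omega^e$ there \dots\ push down'' is not a proof but a description of what one would like to be true.

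The paper proceeds in the opposite direction: it \emph{constructs} $\omega^e$ explicitly as an almost semi-meromorphic current and then verifies that its push-forward agrees with $R^{\mathcal{I}_Z}_{n-e}\wedge dz$. The construction is by backwards induction on $e$. A preliminary lemma produces, for each $e$, an almost semi-meromorphic current $\tilde\omega^e$ on $W^e$ with $j_*\tilde\omega^e = \mathbf{1}_{W^e} R^{\mathcal{I}_Z}_{n-e}\wedge dz$; this is the ``basic'' piece, and its construction already requires a nontrivial argument (the map $g$, its minimal inverse $\sigma_G$, and the meromorphicity of $\vartheta = g\tilde\omega^e$). The inductive step then sets
\[
  \omega^e := \tilde\omega^e + j^*(\dbar\sigma^F_{n-e})\,\omega^{e+1},
\]
using that $j^*\dbar\sigma^F_{n-e}$ is almost semi-meromorphic on $\bigcup_{f\geq e+1}W^f$ (where it is generically smooth because $Z^F_{n-e}$ meets that set in dimension $<e$ by \eqref{eqzkprim}). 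One then checks $i_*\omega^e = R^{\mathcal{I}_Z}_{n-e}\wedge dz$ outside a set of dimension $<e$ and invokes the SEP and the dimension principle to conclude equality everywhere. Your sketch contains none of this inductive mechanism; in particular, the splitting of $R_{n-e}$ into the piece supported on $W^e$ and the ``auxiliary'' piece $\dbar\sigma^F_{n-e}R_{n-e-1}$ coming from higher-dimensional strata is the heart of the matter and is absent from your argument.
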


We can now state and prove the main theorem also in the case when the dimension is not pure,
and will then return to the proof of Proposition~\ref{propomeganonpure}.
The setting will be the same as in Section~\ref{sectpure}, with $\mathcal{J}$ an
ideal in $\Ok_Z$, $\mathcal{\tilde{J}}$ a maximal lifting of the ideal, the morphism $a : (F,\psi) \to (E,\varphi)$
between the free resolutions $(E,\varphi)$ and $(F,\psi)$ of $\Ok_\Omega/\mathcal{\tilde{J}}$ and
$\Ok_\Omega/\mathcal{I}_Z$ respectively. We also let as above, $W^e$ be the union of the irreducible
components of $Z$ of dimension $e$.

Note that when $Z$ does not have pure dimension, we can not use the same definition of $\nu$
as in \eqref{eqnudef} to define smooth forms on $Z_{\rm reg}$. For example, say that $Z$ consists
of components of dimension $d-1$ and $d$, so $W^{d-1} \neq \emptyset$, $W^d \neq \emptyset$.
Let $p = n-d$. Then, $\sigma^E_{p+1}$ is defined and smooth outside of $Z^E_{p+1}$, but
since $W^{d-1} \subset Z_{p+1}^E$ (see Section~\ref{ssectbef}),
$\sigma^E_{p+1}$ is not defined anywhere on $W^{d-1}$, and hence can not be uniquely extended
there. Thus, we alter the definition of $\nu$ to be zero on such components.
We will explain after the statement of the theorem why it does not in fact matter
how we define $\nu$ on such components.

In order to define $\nu$, we note first that $\{ W^e\setminus Z_{\rm sing} \mid e =0,\dots,d \}$
cover disjointly $Z_{\rm reg}$, since any point of $Z$ belongs to at least one $W^e$,
and any point belonging to two irreducible components of different dimensions is necessarily
a singular point. Thus, we get a well-defined smooth form on $Z_{\rm reg}$ by defining it
separately on each $W^e\setminus Z_{\rm sing}$. We then let
\begin{equation*}
    \nu^l_k := \left\{  \begin{array}{cc}
        j^*\sigma_k^E\dbar j^*\sigma_{k-1}^E\dots\dbar j^*\sigma_{l+1}^E & \text{ on } W^e\setminus Z_{\rm sing}, e \geq n-l \\
 0                                                         & \text{ on } W^e\setminus Z_{\rm sing}, e < n-l \end{array} \right.,
\end{equation*}
where $j : W^e \to \Omega$ is the inclusion, and we let
\begin{equation} \label{eqnudef2}
    \nu := \sum_{k > l} \nu^l_k.
\end{equation}

\begin{thm} \label{thmmain2}
    Let $Z \subseteq \Omega \subseteq \C^n$ be an analytic subvariety of $\Omega$ of not necessarily
    pure dimension, where $\Omega$ is an open set in $\C^n$. Let $\mathcal{J} \subseteq \Ok_Z$
    be an ideal.
    Then $\nu$ defined by \eqref{eqnudef2} has an extension as an almost
    semi-meromorphic current to $Z$, which we denote by $V^E$.
    If we let 
    \begin{equation} \label{eqrnonpuredef}
        R^\mathcal{J}_Z  := a \omega_Z - \nabla( V^E \wedge a \omega_Z ),
    \end{equation}
    where $a : (F,\psi) \to (E,\varphi)$ is the morphism in \eqref{eqadef}, then 
    \begin{equation} \label{eqannRJnonpure}
        \ann_{\Ok_Z} R^\mathcal{J}_Z  = \mathcal{J}
    \end{equation}
    and
    \begin{equation} \label{eqRJpushforwardnonpure}
        i_* R^\mathcal{J}_Z = R^{\mathcal{\tilde{J}}}_\Omega,
    \end{equation}
    where $\mathcal{\tilde{J}} \subseteq \Ok_\Omega$ is the maximal lifting of
    $\mathcal{J}$, and $R^{\mathcal{\tilde{J}}}_\Omega$ is the current associated
    to $\mathcal{\tilde{J}}$ as in \eqref{eqRJX}.
\end{thm}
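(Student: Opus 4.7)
The plan is to mimic the proof of Theorem~\ref{thmmain1}, performing the analysis component-by-component on $Z$. Applying the comparison formula \eqref{eqRcomparison} to $a:(F,\psi)\to(E,\varphi)$ and wedging with $dz$ yields
\begin{equation*}
    R^{\tilde{\mathcal{J}}}_\Omega = a R^{\mathcal{I}_Z}_\Omega\wedge dz + \nabla(M\wedge dz),
\end{equation*}
and Proposition~\ref{propomeganonpure} identifies $aR^{\mathcal{I}_Z}_\Omega\wedge dz$ with $i_*(a\omega_Z)$. It is therefore enough to prove the pushforward identity
\begin{equation*}
    M\wedge dz = -i_*(V^E\wedge a\omega_Z).
\end{equation*}
Once this is established, \eqref{eqRJpushforwardnonpure} is immediate, and \eqref{eqannRJnonpure} follows from $\ann R^{\tilde{\mathcal{J}}}_\Omega=\tilde{\mathcal{J}}$ combined with $i^*\tilde{\mathcal{J}}=\mathcal{J}$ and the injectivity of $i_*$ on currents on $Z$.

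As in the pure-dimensional case I would expand $M=\sum_{k>l}M^l_k$ with the $M^l_k$'s defined exactly as in the proof of Theorem~\ref{thmmain1}, and induct on $k-l$ to identify each $M^l_k\wedge dz$ with the pushforward of the corresponding piece of $-V^E\wedge a\omega_Z$. The base case $k=l$ gives $M^l_l\wedge dz = a_l R^{\mathcal{I}_Z}_l\wedge dz = i_*(a_l\omega^{n-l})$ by \eqref{eqomegaRnonpure}, so on components $W^e$ with $e\ge n-l$ the expected pushforward is realized by the corresponding piece of the structure form, while on components $W^e$ with $e<n-l$ the restriction of $\omega^{n-l}$ vanishes and there is nothing to prove. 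The inductive step splits into two cases for each $W^e$: on $W^e$ with $l\ge n-e$ one uses, as in the pure case, that $M^l_{k+1}=\dbar\sigma^E_{k+1}M^l_k$ off $Z^E_{k+1}$ and extends the identification across $Z^E_{k+1}$ by the SEP, via Proposition~\ref{propprodasm} and the bidimension form of the dimension principle, Proposition~\ref{proppmdim2}; on $W^e$ with $l<n-e$ one instead verifies that the contribution of $M^l_k\wedge dz$ localized near $W^e$ vanishes, so that the identification reduces to the other case.

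The main obstacle is precisely this vanishing step on the high-codimension components: tracking the support of $M^l_k\wedge dz$ near a $W^e$ with $l<n-e$ and checking it is small enough for Proposition~\ref{proppmdim2} to apply. The key ingredient is the Buchsbaum--Eisenbud bound $\codim Z^E_k\ge k$ together with the characterization \cite{Eis}, Corollary~20.14, of associated primes of $\tilde{\mathcal{J}}$ at level $k$: since $W^e$ has codimension $n-e<l\le k$ and its ideal is not an associated prime of $\tilde{\mathcal{J}}$ at level $k$ (codimension too low), the intersection $Z^E_k\cap W^e$ has codimension strictly greater than $k$ in $\Omega$, hence dimension strictly less than $n-k+1$. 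Since $n-k+1$ is the second bidimensional index of $M^l_k\wedge dz$, Proposition~\ref{proppmdim2} forces the restriction to $W^e$ to vanish. Assembling these local identifications across all $(l,k)$ and all $e$, and verifying that the SEP-gluings on different $W^e$'s are compatible (using the component-wise interpretation of the SEP recalled after Proposition~\ref{proppmdim2}), produces the desired pushforward identity. The shape of the argument therefore follows the pure case closely; what genuinely changes is that the support and SEP analyses must be performed component-by-component rather than globally, which is what introduces the technical bookkeeping.
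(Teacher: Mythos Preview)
Your overall strategy matches the paper's: apply the comparison formula, decompose $M=\sum_{k>l}M^l_k$, replace $\omega_{l-p}$ by $\omega^{n-l}$, and rerun the induction on $k-l$ from the proof of Theorem~\ref{thmmain1}. Your base case via \eqref{eqomegaRnonpure} is also correct.

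The problem is your justification of the vanishing on low-dimensional components. First, there is an internal contradiction: you set up the case $l<n-e$ and then write ``$W^e$ has codimension $n-e<l\le k$''. Second, and more substantively, invoking associated primes of $\tilde{\mathcal{J}}$ via \cite{Eis}, Corollary~20.14, is the wrong tool. There is no reason that $\mathcal{I}_{W^e}$ should fail to be an associated prime of $\Ok_\Omega/\tilde{\mathcal{J}}$ (for instance, take $\mathcal{J}=0$, so $\tilde{\mathcal{J}}=\mathcal{I}_Z$; then every $\mathcal{I}_{W^e}$ \emph{is} an associated prime), and even when it is not, your conclusion $\codim(Z^E_k\cap W^e)>k$ does not follow. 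The set $Z^E_k\cap W^e$ is simply not the object that controls the support of $M^l_k$.

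The paper's induction hypothesis is the global statement
\[
\supp M^l_k\subseteq \bigcup_{f\ge n-l}W^f=:V_l,
\]
not a component-by-component statement. The base case $k=l$ is immediate from Proposition~\ref{propomeganonpure}, since $M^l_l=a_lR^{\mathcal{I}_Z}_l$ and $\supp(R^{\mathcal{I}_Z}_l\wedge dz)=\supp i_*\omega^{n-l}\subseteq V_l$. For the inductive step, $M^l_{k+1}$ agrees with $(\dbar)\sigma^E_{k+1}M^l_k$ outside $Z^E_{k+1}$, so on the open set $\Omega\setminus V_l$ the pseudomeromorphic current $M^l_{k+1}$ has support in $Z^E_{k+1}$; since $M^l_{k+1}$ has bidegree $(0,k)$ (or $(0,l)$ when $k=l$) and $\codim Z^E_{k+1}\ge k+1$, the dimension principle kills it there, giving $\supp M^l_{k+1}\subseteq V_l$. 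Only the Buchsbaum--Eisenbud bound $\codim Z^E_{k+1}\ge k+1$ is used, nothing about associated primes of $\tilde{\mathcal{J}}$. The SEP half of the induction then proceeds exactly as you describe.
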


Note that in \eqref{eqrnonpuredef}, we take an extension $V^l_k$ of $\nu^l_k$, in order
to extend $\nu$ from $Z_{\rm reg}$ to the current $V^E$ on $Z$. Then, we consider the
product $V^E \wedge a \omega_Z$. This product consists then of terms $V^{n-e}_k \wedge a_{n-e} \omega^e$,
and thus the behaviour of $V^{n-e}_k$ on $W^f\setminus Z_{\rm sing}$, where $f < e$,
(where we have defined $\nu^{n-e}_k$ to be $0$) will not influence the product,
since $W^f\setminus Z_{\rm sing}$ is disjoint from $\supp \omega^e \subseteq \cup_{g \geq e} W^g$
by Proposition~\ref{propomeganonpure}.

\begin{proof}
    Only minor changes need to be done to the proof of Theorem~\ref{thmmain1} in
    order for it to work in this situation as well.

    First of all, one defines almost semi-meromorphic extensions $\nu^l_k$
    from $W^e\setminus Z_{\rm sing}$ to $W^e$, $e \geq n-l-1$ separately in the same
    way as in \eqref{eqvlkdef}. These almost semi-meromorphic extensions can then be
    further extended by $0$ to almost semi-meromorphic currents on all of $Z$,
    since in any smooth modification of $Z$,
    the irreducible components of $Z$ will split into disjoint manifolds.
    Summing these extensions for fixed $l$ and $k$, we get an almost
    semi-meromorphic current $V^l_k$ on $Z$ extending $\nu^l_k$

    Then, in \eqref{eqmlkomega} and the rest of the proof, $\omega_{l-p}$ is replaced by
    $\omega^{n-l}$, and the equality in \eqref{eqmlkomega} will now follow from \eqref{eqomegaRnonpure}
    instead of \eqref{eqomegaR}, together with the fact that
    $\supp \omega^{n-l} \subseteq \cup_{e \geq n-l} W^e$
    (where we as before assume that $W^e$ consists of the irreducible components of $Z$ of dimension $e$).

    As in the proof of Theorem~\ref{thmmain1}, it is clear that \eqref{eqmlkomega} holds
    on $Z_{\rm reg}$.

    The end of the proof of Theorem~\ref{thmmain1} starts by using that it is clear
    that \eqref{eqmlkomega} holds on $Z_{\rm reg}$ for $p \leq l$, and then it is proved
    that \eqref{eqmlkomega} holds on all of $Z$ by proving simultaneously by induction
    over $k-l$ that $M^l_k$ has the SEP with respect to $Z$, and that $M^l_k = 0$ for $l<k$.

    In the case here, it is instead clear that \eqref{eqmlkomega} holds on $W^e \setminus Z_{\rm sing}$
    for $e \geq n-l$. The proof that \eqref{eqmlkomega} holds on all of $Z$ then follows by
    the same induction argument over $k-l$ as in Theorem~\ref{thmmain1}, but where the induction statement
    that $M^l_k = 0$ for $l < p$ is replaced by that $\supp M^l_k \subseteq \cup_{e \geq n-l} W^e$.
    The base case $k=l$ then follows from Proposition~\ref{propomeganonpure}.
\end{proof}

We now turn to the proof of Proposition~\ref{propomeganonpure}.
Only the case of pure dimension is treated in \cite{AS2}. We will essentially
go through the proof of Proposition~3.3 in \cite{AS2}, and explain how to adapt
the proof to cover also the case when the dimension is not pure.

Since the argument is rather technical, we begin by discussing the main ideas of
the proofs.

If we consider the current $R$ associated to some pure-dimensional ideal $\mathcal{J}$,
of codimension $p$, then $R$ will consist of a ``basic current'' $R_p$ and ``auxiliary currents''
$R_k$ for $k>p$. The reason we call them such is that $R_k$ can be obtained from $R_p$,
by multiplying $R_p$ with some generically smooth form, and extending this as a current
to a current with the SEP on $Z$. For example, $R_{p+1}$ is the standard extension of
$\dbar\sigma_{p+1} \wedge R_p$, which a priori is defined only outside $Z_{p+1}$, and then,
$R_{p+2}$ is the standard extension of $\dbar\sigma_{p+2}\wedge R_{p+1}$, and so on.

The first part of the proof of Proposition~3.3 in \cite{AS2} consists of construction the almost
semi-meromorphic current $\omega_0$ from the ``basic current'' $R_p$, and then,
as a next step, almost semi-meromorphic currents $\omega_k$, $k > 0$ are created from the
``auxiliary currents'' $R_{p+k}$, $k > 0$ by an induction argument.
We will do the same construction on each $W^e$ first, and the ``basic current'' on
$W^e$ will be $R' := {\bf 1}_{W^e} R^{\mathcal{I}_Z}_{n-e} \wedge dz$.
In order to keep this proof to a bit more manageable length, we split the step of
proving that $R'$ is the push-forward of an almost semi-meromorphic current $\tilde{\omega}$
into a separate lemma, Lemma~\ref{lmaomeganonpure}, which then corresponds to the first step
in the proof of Proposition~3.3 in \cite{AS2}.

\begin{lma} \label{lmaomeganonpure}
    Using the notation of Proposition~\ref{propomeganonpure}, let $R' := {\bf 1}_{W^e} R^{\mathcal{I}_Z}_{n-e}\wedge dz$.
    Then, there exists an almost semi-meromorphic current $\tilde{\omega}^e$ on $W^e$ such that
    $j_* \tilde{\omega}^e = R'$, where $j : W^e \to \Omega$ is the inclusion.
\end{lma}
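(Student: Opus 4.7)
The plan is to adapt the proof of Proposition~3.3 in \cite{AS2} to the single component $W^e$.

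First, I would verify that $R'$ is supported on $W^e$ and corresponds to a current on $W^e$ via $j$. Using the structure of singularity subvarieties and associated primes from Section~\ref{ssectbef}, one has $Z^F_{n-e} = W^e \cup Z'_{n-e}$ where $\codim Z'_{n-e} \geq n-e+1$. The part of the pseudomeromorphic $(0,n-e)$-current $R^{\mathcal{I}_Z}_{n-e}$ supported on $Z'_{n-e}$ vanishes by the dimension principle (Proposition~\ref{proppmdim}), so in fact $\mathbf{1}_{W^e}R^{\mathcal{I}_Z}_{n-e} = R^{\mathcal{I}_Z}_{n-e}$ already has support in $W^e$. Applying the dimension principle once more to $(n,n-e)$-currents on subvarieties of $W^e$ of codimension $\geq 1$ (which have codimension $\geq n-e+1$ in $\Omega$) shows that $R'$ corresponds to a well-defined current on $W^e$.

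Next, I would construct $\tilde{\omega}^e$ via a resolution. Take a smooth modification $\tau: \widetilde{W^e} \to W^e$ that is simultaneously a log resolution of the singular locus of $W^e$ and of the intersection $W^e \cap \bigcup_{f\neq e}W^f$. On the dense open $W^e_{\reg}\setminus\bigcup_{f\neq e}W^f$, the ideal $\mathcal{I}_Z$ locally coincides with $\mathcal{I}_{W^e}$, and $R^F_{n-e}\wedge dz$ agrees locally with the Bochner-Martinelli type current associated to a Koszul complex of a defining regular sequence of $W^e$; by the Poincar\'e-Lelong formula this equals the integration current $[W^e]$ wedged with a smooth $F_{n-e}$-valued form. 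Pulling this back by $\tau$ gives a smooth $F_{n-e}$-valued form on an open dense subset of $\widetilde{W^e}$.

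Following the pure-dimensional argument of \cite{AS2}, the log resolution allows one to extend this form to a semi-meromorphic $F_{n-e}$-valued form $\tilde{\alpha}^e$ on $\widetilde{W^e}$, with poles along the exceptional divisor. I would then set $\tilde{\omega}^e := \tau_*\tilde{\alpha}^e$, which is almost semi-meromorphic on $W^e$ by definition. The equality $j_*\tilde{\omega}^e = R'$ is checked by noting that both sides are pseudomeromorphic with the SEP and agree on the dense open $W^e_{\reg}\setminus\bigcup_{f\neq e}W^f$ where both are given by the same smooth form. The main obstacle is showing that the smooth form on the open dense subset of $\widetilde{W^e}$ actually extends to a semi-meromorphic form on all of $\widetilde{W^e}$. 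This requires tracking how the expression \eqref{equdef} for $u^F$ behaves under pullback by the log resolution, exactly as in the pure-dimensional proof of \cite{AS2}; the bookkeeping is essentially the same, modulo the preliminary step of isolating the $W^e$-contribution via the dimension principle, which is the feature distinguishing the non-pure case.
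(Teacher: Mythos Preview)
Your proposal has the right overall shape, but there is a factual error and a genuine gap.

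First, the claim that $\mathbf{1}_{W^e}R^{\mathcal{I}_Z}_{n-e} = R^{\mathcal{I}_Z}_{n-e}$ is false in general. The support of $R^{\mathcal{I}_Z}_{n-e}$ is \emph{not} contained in $Z^F_{n-e}$; when there are components $W^f$ with $f>e$, the current $R^{\mathcal{I}_Z}_{n-e}$ typically has a nonzero contribution there of the form $\dbar\sigma^F_{n-e}R^{\mathcal{I}_Z}_{n-e-1}$. (The example $Z=\{x=y=0\}\cup\{z=0\}$ in the paper exhibits exactly this: $R^E_2$ has a term supported on $\{z=0\}=W^2$, not only on $W^1=Z^E_2$.) This error is not fatal, since $R'$ is defined with the restriction $\mathbf{1}_{W^e}$ anyway, but it shows you are misreading the structure.

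The real gap is in your extension step. You write that the semi-meromorphic extension comes from ``tracking how $u^F$ behaves under pullback by the log resolution.'' But $u^F_{n-e}$ is built from $\sigma^F_1,\dots,\sigma^F_{n-e}$, and for $j\leq n-e$ the form $\sigma^F_j$ is singular on $Z^F_j\supseteq Z\supseteq W^e$, i.e., singular \emph{everywhere} on $W^e$, not just along a divisor. No log resolution of $W^e$ will turn these into forms with principal denominators; they do not even restrict to $W^e$ as almost semi-meromorphic objects. So the mechanism you propose for controlling the growth of your smooth form $\alpha$ across the exceptional divisor does not work.

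The paper (following \cite{AS2}) uses a different device: one introduces a bundle $G$ and a holomorphic map $g:G\to F_{n-e}$ with $\psi_{n-e+1}g=0$, and writes $\tilde\omega^e=\sigma_G\vartheta$ on the regular part, where $\vartheta=g\tilde\omega^e$. The point is that $\vartheta$ is a $\dbar$-closed $G$-valued $(e,0)$-form that extends as a current to all of $W^e$ (since $g$ is holomorphic and $R'$ already extends); by Example~2.8 in \cite{AS2} such an object is automatically \emph{meromorphic}. Then only $\sigma_G$---which is smooth outside $Z^F_{n-e+1}$, a proper subvariety of $W^e$---needs to be made almost semi-meromorphic via principalization of a Fitting ideal. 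This factorization into (meromorphic)$\times$(almost semi-meromorphic with codimension-one singularities on $W^e$) is what your argument is missing.
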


\begin{proof}
    In Proposition~3.3 in \cite{AS2}, $Z$ is assumed to have pure codimension
    $p$. As a preliminary step before the proof of Proposition~3.3 in \cite{AS2}, a vector bundle $G$
    and a morphism $g : G \to F_p$ is defined such
    that $\psi_{p+1} g = 0$, and $g$ has a minimal right-inverse $\sigma_G$,
    defined and smooth outside of $Z_{p+1}$
    (in the notation of \cite{AS2}, $g : F \to E_p$, and $\sigma_G$ is denoted $\sigma_F$).
    We do the same construction for $p = n-e$; it is not essential for this construction
    that $p = \codim Z$ or that $Z$ is of pure dimension.

    The first step in the proof in \cite{AS2} is to define $\omega_0$ on $Z_{\rm reg}$.
    On $W^e\setminus Z_{\rm sing}$, we define $\tilde{\omega}^e$ in the same way as $\omega_0$
    is defined in \cite{AS2}; this definition on the regular part does not rely on $Z$
    being of pure dimension.
    By construction, $i_* \tilde{\omega}^e = R_p^{\mathcal{I}_Z}\wedge dz = R'$ on $W^e\setminus Z_{\rm sing}$.
    We have that $R'$ corresponds to a current on $W^e \setminus Z_{\rm sing}$
    since it is the push-forward of $\tilde{\omega}^e$ there.
    In fact, $R'$ will correspond to a current on all of $W^e$, since if $\phi|_{Z_{\rm reg}} = 0$,
    then $\supp \phi\wedge R' \subseteq Z_{\rm sing} \cap W^e$,
    so
    \begin{equation*}
        \phi\wedge R' = {\bf 1}_{Z_{\rm sing}\cap W^e} (\phi\wedge R') = \phi\wedge {\bf 1}_{Z_{\rm sing}\cap W^e} R' = 0,
    \end{equation*}
    where the last equality holds since $\codim Z_{\rm sing}\cap W^e > \codim W^e = p$,
    and $R'$ is a pseudomeromorphic $(n,p)$-current, so ${\bf 1}_{Z_{\rm sing}\cap W^e} R' = 0$
    by the dimension principle.
    Thus, $\tilde{\omega}^e$ has an extension as a current to $W^e$. If we let $\vartheta = g\tilde{\omega}^e$
    on $W^e\setminus Z_{\rm sing}$, then, as in the equation following (3.19) in \cite{AS2},
    $\dbar \vartheta = 0$ on $W^e\setminus Z_{\rm sing}$ and $\tilde{\omega}^e = \sigma_G\vartheta$.
    In addition, since $\tilde{\omega}^e$ has an extension as a current to $W^e$, so does
    $\vartheta = g\tilde{\omega}^e$, since $g$ is holomorphic (and in particular, smooth).
    By Example~2.8 in \cite{AS2}, $\vartheta$ then has a meromorphic extension to $W^e$.

    As in the proof of Proposition~3.3 in \cite{AS2}, by principalization
    of the Fitting ideal of $g$, followed by a resolution of singularities,
    one gets a smooth modification $\tau : \tilde{Z} \to Z$
    of $Z$ such that the Fitting ideal of $\tau^*g$ is locally principal on $\tilde{Z}$.
    Thus, there exists a line bundle on $\tilde{Z}$ with section $s_G$
    generating this Fitting ideal.
    Then, $\tau^* \sigma_G = \beta_G/s_G$, where $\beta_G$ is smooth.
    We thus get that $j^*\sigma_G$, is almost semi-meromorphic on $W^e$ since it is smooth
    outside of $Z_{p+1}$, which has codimension $\geq p+1$.
    Hence, $\tilde{\omega}^e = \sigma_G\vartheta$ has an extension to $W^e$ as a product
    of almost semi-meromorphic currents and this extension has the SEP with respect
    to $W^e$ by Proposition~\ref{propprodasm}. Since $i_*\tilde{\omega}^e = R'$ on
    $W^e\setminus Z_{\rm sing}$, and both sides have
    extensions over $Z_{\rm sing}$, this equality will hold on all of $W^e$ if we show that
    also $R'$ has the SEP with respect to $W^e$. That $R'$ has the SEP with respect to $Z$
    follows from the dimension principle, since $R'$ is a pseudomeromorphic $(n,p)$-current
    with support on $W^e$ of codimension $p$ (so ${\bf 1}_V R'$ will be a pseudomeromorphic
    $(n,p)$-current with support on $V$ of codimension $\geq 1$ in $W^e$).
\end{proof}

Now, we turn back to the proof of Proposition~\ref{propomeganonpure}, which will correspond to the
last part of the proof of Proposition~3.3 in \cite{AS2}. We write
the current $R_{n-e}$ as $R_{n-e} = R' + R''$, where $R'$ is as above, and $R'' = R_{n-e} - R''$
has a priori support on $\overline{Z\setminus W^e}$. In fact, by the dimension principle,
it will have support on $V := \cup_{d > e} W^d$, i.e., on the irreducible components
of $Z$ if dimension $> e$. There, $R''$ will correspond to the ``auxiliary currents'' above,
created from the ``basic currents'' on each $W^d$ by multiplying with almost semi-meromorphic
forms on $W^d$.

The first construction, with $R'$, works well on $W^e\setminus V$, where $R_{n-e} = R'$,
and the second construction, with $R''$, works well on $V \setminus Z_{n-e}$, where
$\dbar\sigma_{n-e}$ is smooth, and $R_{n-e} = R''$.

The final step of the proof is to treat the parts of $Z$ which the construction
above does not handle, i.e., $W^e \cap (\cup_{d \neq e} W^d)$, $V \cap Z_{n-e}$
and $\cup_{d < e} W^d$. However, we will see that all of these have dimension $< e$
(a key point for this is \eqref{eqzkprim}) so essentially by the dimension principle,
these parts are small enough to not have any influence.

\begin{proof}[Proof of Proposition~\ref{propomeganonpure}]
    For $d = \dim Z$ (i.e., the dimension of the irreducible components of maximal dimension),
    we define $\omega^d := \tilde{\omega}^d$, where $\tilde{\omega}^d$ is from Lemma~\ref{lmaomeganonpure}.
    Since by the dimension principle, $R_p\wedge dz$ has support
    on $W^d$, $R' = {\bf 1}_{W^d} R_p\wedge dz = R_p \wedge dz$. Thus, since
    $i_* \tilde{\omega}^d = R'$, we get that $i_* \omega^d = R_p \wedge dz$, $\omega^d$ is almost
    semi-meromorphic, and $\supp \omega^d \subseteq W^d$.

    As in the proof of Lemma~\ref{lmaomeganonpure}, by principalization
    of the Fitting ideals of $\varphi_k$ for $k \geq \codim Z$, followed by a
    resolution of singularities, one gets a smooth modification $\tau : \tilde{Z} \to Z$
    of $Z$ such that all the Fitting ideals are locally principal on $\tilde{Z}$.
    Thus, there exists line bundles on $\tilde{Z}$ with sections $s_k$
    generating the Fitting ideals of $\tau^* \varphi_k$.
    Then, as in the proof of Proposition~3.3 in \cite{AS2}, $\tau^* \sigma_k = \beta_k/s_k$,
    where $\beta_k$ are smooth, and $\tau^* \dbar\sigma_k = \dbar\beta_k/s_k$.
    We thus get that $i^*\sigma_k$ and $i^*\dbar\sigma_k$ are almost
    semi-meromorphic on the irreducible components of $Z$ where they are generically defined.

    We will now by backwards induction over $e$ define $\omega^e$, such that
    $i_* \omega^e = R_{n-e} \wedge dz$, $\omega^e$ is almost semi-meromorphic and
    $\supp \omega^e \subseteq \bigcup_{f \geq e} W^f$.
    Assume hence that this holds for $\omega^{e+1}$, and let $p = n-e$.
    On $\supp \omega^{e+1} \subseteq \bigcup_{f \geq e+1} W^f =: V$, we have that
    $j^* \dbar\sigma_p$ is almost semi-meromorphic, where $j : V \to \Omega$ is
    the inclusion, since it is generically defined outside of $Z_p$, which has dimension $\leq e$.
    Then, we let
    \begin{equation*}
        \omega^e := \tilde{\omega}^e + j^*(\dbar\sigma_p) \omega^{e+1}.
    \end{equation*}
    Since $i_* \omega^{e+1} = R_{p-1} \wedge dz$, and $R_p = \dbar\sigma_p R_{p-1}$
    outside of $Z_p$, we get that $i_* \omega^e = R^p \wedge dz$ outside of $Z_p$.
    In addition, we have that $i_* \omega^e = R^p \wedge dz$ on $W^e\setminus Z_{\rm sing}$
    by construction of $\tilde{\omega}^e$ and the fact that $\omega^{e+1}$ has no support there.
    In conclusion, $i_* \omega^e = R_p \wedge dz$ outside of $(W^e\cap Z_{\rm sing}) \cup (V\cap Z_p)$.
    Both sides have current extensions over this set, and $\omega^e$ being almost semi-meromorphic
    thus has the SEP on $W^e \cup V$. It thus remains to see that also $R_p \wedge dz$ has the
    SEP in order to finish the induction step. This will hold by the dimension principle,
    since $R^p\wedge dz$ is of bidegree $(n,p)$, and $\dim ((W^e\cap Z_{\rm sing}) \cup (V\cap Z_p)) < e$.
    To see this last part, we note first that
    $W^e \cap Z_{\rm sing} = W^e_{\rm sing}\cup (W^e\cap (\cup_{f\neq e} W^f))$, of which both
    of the sets in this union have codimension $\geq 1$ in $W^e$.
    In addition, by \eqref{eqzkprim}, $\dim V\cap Z_p < e$.
\end{proof}

We consider an example of such a structure form. The calculation becomes rather involved,
even though this is probably the simplest case of a variety which is not of pure dimension.

\begin{ex}
    Let $Z = \{ x = y = 0 \} \cup \{ z = 0 \} = Z(xz,yz) \subseteq \C^3$.
    Then $\Ok/\mathcal{I}_Z$ has a free resolution
    \begin{equation*}
        0 \to \Ok \stackrel{\varphi_2}{\to} \Ok^{\oplus 2} \stackrel{\varphi_1}{\to} \Ok \to \Ok/\mathcal{I}_Z,
    \end{equation*}
    where
    \begin{equation*}
        \varphi_2 = \left( \begin{array}{c} -y \\ x \end{array} \right) \text{ and }
        \varphi_1 = \left( \begin{array}{cc} xz & yz \end{array} \right),
    \end{equation*}
    i.e., it is like the Koszul complex of $(x,y)$, except for the factors $z$ of the
    entries in $\varphi_1$.
    We first compute the current $R^E$ associated to this free resolution.
    Since $R^E$ has support on $Z$, by the dimension principle, we get that
    $R^E_1$ has support on $\{ z = 0 \}$. Looking first on $\{ z = 0 \} \setminus \{ x = 0 \}$,
    $\mathcal{I}_Z$ is generated by $z$. Applying the comparison formula to $(E,\varphi)$,
    and the free resolution $(F,\psi)$ of $\Ok/\mathcal{J}(z)$, where $F_1 \cong F_0 \cong \Ok$
    and $\psi_1 = z$, we get that the morphism $a : (F,\psi) \to (E,\varphi)$ becomes
    $a_1 = (\begin{array}{cc} 1/x & 0\end{array})^t$.
    Since the current associated to $F$ equals $\dbar(1/z)$, we get by \eqref{eqRcomparison} and
    \eqref{eqmdef} that
    $R^E_1 = (I_{E_1} - \varphi_2\sigma^E_2)(\begin{array}{cc}1/x & 0\end{array})^t \dbar(1/z)$.
    Using that $\sigma^E_2 = (\begin{array}{cc} -\bar{y} & x \end{array})/(|x|^2+|y|^2)$, we get
    that outside of $\{ x = z = 0 \}$,
    \begin{equation} \label{eqre1}
        R^E_1 = \frac{1}{|x|^2+|y|^2}\left(\begin{array}{c} \bar{x}\\\bar{y} \end{array}\right)\dbar\frac{1}{z}.
    \end{equation}
    By the dimension principle, this holds everywhere, since $R^E_1$ is a pseudomeromorphic $(0,1)$-current,
    and $\codim \{ x = z = 0 \} = 2$.
    Regarding what this means at $\{ 0 \}$, cf., the discussion of standard extensions in
    Example~5 in \cite{Lar3}.

    Outside $\{ z = 0 \}$, then $\mathcal{I}_Z = (x,y)$, and the free resolution $(E,\varphi)$ of $\Ok/\mathcal{I}_Z$
    will differ from the Koszul complex of $(x,y)$ only by the factor $z$ in the entries $\varphi_1$.
    This will cause an extra factor $1/z$ in $\sigma^E_1$ compared to the $\sigma_1$ associated to the
    Koszul complex. Since the current associated to the Koszul complex of $(x,y)$ is $\dbar(1/y)\wedge \dbar(1/x)$,
    we get that
    \begin{equation} \label{eqre2}
        R^E_2 = \frac{1}{z} \dbar\frac{1}{y}\wedge \dbar\frac{1}{x}
    \end{equation}
    outside of $\{ z = 0 \}$. On the other hand, since $Z^E_2 = \{ x = y = 0 \}$, we have outside of $Z^E_2$
    that $R^E_2 = \dbar\sigma^E_2 R^E_1$, and combining this with \eqref{eqre1} and \eqref{eqre2}, we get that
    \begin{align*}
        R^E = \frac{1}{|x|^2+|y|^2}\left(\begin{array}{c} \bar{x}\\\bar{y} \end{array}\right)\dbar\frac{1}{z}
            + \frac{1}{z} \dbar\frac{1}{y}\wedge \dbar\frac{1}{x} +  \\
            \dbar\left(\frac{\left(\begin{array}{cc} -\bar{y} & \bar{x} \end{array}\right)}{|x|^2+|y|^2}\right)
                \frac{1}{|x|^2+|y|^2}\left(\begin{array}{c} \bar{x}\\\bar{y} \end{array}\right)\wedge\dbar\frac{1}{z}
    \end{align*}
    outside of $\{ x = y = 0 \} \cap \{ z = 0 \} = \{ 0 \}$.
    By the dimension principle, this thus holds everywhere, since the components of $R^E$ are of either
    bidegree $(0,1)$ or $(0,2)$ and $\codim \{ 0 \} = 3$.
    Taking the wedge product with $\omega_{\C^3} = dx\wedge dy\wedge dz$, and using that
    $\dbar(1/y)\wedge\dbar(1/x)\wedge dx\wedge dy = (2\pi i)^2 [x = y = 0]$ and $\dbar(1/z) \wedge dz = 2\pi i[z = 0]$,
    we get by \eqref{eqomegaRnonpure} that
    \begin{align*}
        \omega_Z = (2\pi i)^2 \chi_{\{x = y = 0 \}} \frac{dz}{z} +
        2\pi i \chi_{\{z = 0 \}} \left(
        \frac{dx\wedge dy}{|x|^2+|y|^2}\left(\begin{array}{c} \bar{x}\\\bar{y} \end{array}\right) + \right.\\
        \left.\dbar\left(\frac{\left(\begin{array}{cc} -\bar{y} & \bar{x} \end{array}\right)}{|x|^2+|y|^2}\right)
                \frac{dx\wedge dy}{|x|^2+|y|^2}\left(\begin{array}{c} \bar{x}\\\bar{y} \end{array}\right)\right),
    \end{align*}
    where $\chi_{\{x = y = 0 \}}$ and $\chi_{\{z = 0 \}}$ are the characteristic functions for the respective
    zero sets.
\end{ex}

\section{Free resolutions on singular varieties} \label{sectnaive}

Given an ideal $\mathcal{J}\subseteq \Ok_Z$, where $Z \subseteq \Omega$,
the construction of the current $R^\mathcal{J}_Z$ relied on free resolutions
over $\Ok_\Omega$ of the maximal lifting $\mathcal{\tilde{J}}$ of $\mathcal{J}$.
A more natural generalization of the construction in \cite{AW1} would be to
consider free resolutions intrinsically
on $Z$, i.e., a free resolution of $\Ok_Z/\mathcal{J}$ over $\Ok_Z$, which
(at least locally) exists also on a singular variety.
We discuss in this section why this approach does not work.

One of the differences between free resolutions of ideals in the smooth and singular
case is that the free resolutions need not be of finite length in the latter case,
see Example~\ref{exinfres} below for an example of this.
In fact, a famous result by Auslander-Buchsbaum-Serre states that a Noetherian
local ring $R$ is regular if and only if all finitely generated $R$-modules
have free resolutions of finite length.
If $R = \Ok_{Z,z}$, then $R$ is regular if and only if $z$ is a regular point of $Z$.
However, even when the ideals do have finite free resolutions, the construction
of Andersson and Wulcan will in general not have the correct annihilator.
This is essentially treated in \cite{Lar2}, but we will elaborate a bit here how
this applies to our situation. We consider first an example, where
one can get an indication of what can go wrong.

\begin{ex}
    Let, as in Section~\ref{ssectpihyper}, $Z$ be a reduced hypersurface defined
    by a holomorphic function $h$, and let $f$ be a non-zero-divisor in $\Ok_Z$.
    Note that $f$ being a non-zero-divisor means precisely that the complex
    $\Ok_Z \stackrel{(f)}{\to} \Ok_Z$ is a free resolution of $\Ok_Z/\mathcal{J}(f)$.
    Hence, the current associated to this free resolution is the residue
    current $\dbar(1/f)$.
    Consider the push-forward of $\dbar(1/f)$ to 
    the ambient space, $i_* \dbar(1/f) = \dbar(1/\tilde{f}) \wedge [Z]$, where
    $\tilde{f}$ is a representative of $f$ in $\Omega$.
    By the Poincar\'e-Lelong formula, see \cite{CH}, Section~3.6,
    \begin{equation*}
        \dbar\frac{1}{\tilde{f}}\wedge [Z] =
        \frac{1}{2\pi i} \dbar \frac{1}{\tilde{f}}\wedge\dbar\frac{1}{h}\wedge dh.
    \end{equation*}
    Now, if $\phi \dbar(1/\tilde{f})\wedge[Z] = 0$, then the coefficients of $\phi dh$
    lie in $\mathcal{J}(\tilde{f},h)$ by the duality theorem. However, since $dh$ vanishes
    on $Z_{\rm sing}$, this does not necessarily imply that $\phi \in \mathcal{J}(\tilde{f},h)$.
    Indeed, we show in \cite{Lar2} that if $\codim Z_{\rm sing} = 1$, then one can find
    $\phi$ and $f$ such that $\phi dh \in \mathcal{J}(\tilde{f},h)$ but $\phi \notin \mathcal{J}(\tilde{f},h)$.
    In that case, we thus get that $\ann \dbar(1/f) \neq \mathcal{J}(f)$.
\end{ex}

We now turn to the general case.
Consider a singular subvariety $Z \subseteq \Omega$ of codimension $p$.
Let $Z^0 := Z_{\rm sing}$ and $Z^k := Z_{k+p}$ for $k \geq 1$,
where $Z_{k+p}$ are the singularity subvarieties associated to a free resolution
of $\Ok_Z$. Let $q$ be the largest integer such that $\codim Z^k \geq k+q$,
where by $\codim Z^k$, we refer to the codimension of $Z^k$ in $Z$.
(Since $Z$ is assumed to be singular, $Z^0 = Z_{\rm sing} \neq \emptyset$, and hence, $q \leq \dim Z$.)
By Corollary~9 in \cite{Lar2} there exists a complete intersection $f = (f_1,\dots,f_q)$ on $Z$ such that
$\ann \mu^f \neq \mathcal{J}(f)$. By Theorem~\ref{thmbmch}, $\mu^f$ equals the Bochner-Martinelli
current of $f$, i.e., the current associated to the Koszul complex of $f$.
We claim that in this case, the Koszul complex of $f$ is a free resolution of $\mathcal{J}(f)$,
and hence what we described above show that the naive generalization of the construction by
Andersson and Wulcan does not work in this case.
To see that the Koszul complex of $f$ is exact, we note first that by Theorem~4 in \cite{Lar2},
if $f' = (f_1,\dots,f_{q'})$, where $q' < q$, then $\ann \mu^{f'} = \mathcal{J}(f')$,
and by Lemma~30 in \cite{Lar2}, $(f_1,\dots,f_q)$ is then a regular sequence.
By \cite{Eis}, Corollary~17.5, the Koszul complex of $f$ is then a free resolution
of $\Ok_Z/\mathcal{J}(f)$.

We saw however in Example~\ref{expurecm} that if $Z$ is Cohen-Macaulay, there was an
easy remedy for this, we should consider $R = \omega - \nabla(U^f\omega)$ instead of $I-\nabla(U^f)$.
If $Z$ is not Cohen-Macaulay, or if we have an ideal which does not lift to a Cohen-Macaulay ideal,
it is not as clear how to remedy this.

We consider also another issue arising when the free resolutions on the variety are not of finite length.
\begin{ex} \label{exinfres}
    Let $Z = \{ x = 0 \} \cup \{ y = 0 \} = \{ xy = 0 \} \subseteq \C^2$.
    Consider the ideal $\mathcal{J} = \mathcal{J}(x) \subseteq \Ok_Z$.
    It is easily verified that if $E_k \cong \Ok_Z$, $\varphi_{2k+1} = (x)$,
    $\varphi_{2k+2} = (y)$, $k = 0,1,\cdots$, then $(E,\varphi)$ is a free resolution of $\Ok_Z/\mathcal{J}(x)$
    over $\Ok_Z$.
    In addition, since $x\in \mathfrak{m}$ and $y \in \mathfrak{m}$, where
    $\mathfrak{m} := \mathcal{J}(x,y)$ is the maximal ideal in $\Ok_{Z,0}$,
    we have that $(E,\varphi)$ is a minimal free resolution over the local ring $\Ok_{Z,0}$, see
    \cite{Eis}, Theorem~20.2. This theorem about uniqueness of minimal free resolutions
    holds over any Noetherian local ring, without any requirements about regularity
    of the ring, so since $(E,\varphi)$ is one minimal free resolution of $\Ok_Z/\mathcal{J}$
    over $\Ok_Z$ of infinite length, any other free resolution must also be of infinite length.

    We now consider the sets $Z^E_k$, where $\varphi_k$ does not have minimal rank.
    They are $Z^E_{2k+1} = \{ x = 0 \}$ and $Z^E_{2k+2} = \{ y = 0 \}$, $k = 0,1,\dots$.
    Note that $\codim Z^E_k = 0$ and $Z^E_{2k+2} \not\subseteq Z(\mathcal{J})$.
    This shows that the Buchsbaum-Eisenbud criterion and its corollaries, as described in
    Section~\ref{ssectbef}, fail. The reason for this is not directly that the ring we consider
    is not regular, the Buchsbaum-Eisenbud criterion holds on any Noetherian local ring.
    However, the criterion does not apply here, since one requirement is that the complex
    is of finite length.
    Since much of the construction of Andersson-Wulcan currents relies on the Buchsbaum-Eisenbud
    criterion and its corollaries, this would be an obstacle to overcome in order to construct
    such currents directly from free resolutions on the variety, without going to a lifting
    of the ideal as we do in this article.
\end{ex}

\begin{bibdiv}
\begin{biblist}

\bib{AndCH}{article}{
   author={{An}dersson, Mats},
   title={Uniqueness and factorization of Coleff-Herrera currents},
   journal={Ann. Fac. Sci. Toulouse Math.},
   volume={18},
   date={2009},
   number={4},
   pages={651--661},
}

\bib{AndArk}{article}{
   author={{An}dersson, Mats},
   title={A residue criterion for strong holomorphicity},
   journal={Ark. Mat.},
   volume={48},
   date={2010},
   number={1},
   pages={1--15},
}

\bib{AS2}{article}{
   author={Andersson, Mats},
   author={Samuelsson, H{\aa}kan},
   title={A Dolbeault-Grothendieck lemma on complex spaces via Koppelman
   formulas},
   journal={Invent. Math.},
   volume={190},
   date={2012},
   number={2},
   pages={261--297},
}

\bib{AS3}{article}{
   author={Andersson, Mats},
   author={Samuelsson, H{\aa}kan},
   title={Weighted Koppelman formulas and the $\overline\partial$-equation on an analytic space},
   journal={J. Funct. Anal.},
   volume={261},
   date={2011},
   number={3},
   pages={777--802},
}

\bib{ASS}{article}{
   author={Andersson, Mats},
   author={Samuelsson, H{\aa}kan},
   author={Sznajdman, Jacob},
   title={On the Brian\c{c}on-Skoda theorem on a singular variety},
   journal={Ann. Inst. Fourier (Grenoble)},
   volume={60},
   date={2010},
   number={2},
   pages={417--432},
}

\bib{Atiyah}{article}{
   author={{At}iyah, M. F.},
   title={Resolution of singularities and division of distributions},
   journal={Comm. Pure Appl. Math.},
   volume={23},
   date={1970},
   pages={145--150}
}

\bib{AW1}{article}{
   author={Andersson, Mats},
   author={Wulcan, Elizabeth},
   title={Residue currents with prescribed annihilator ideals},
   journal={Ann. Sci. \'Ecole Norm. Sup.},
   volume={40},
   date={2007},
   number={6},
   pages={985--1007}
}

\bib{AW2}{article}{
   author={Andersson, Mats},
   author={Wulcan, Elizabeth},
   title={Decomposition of residue currents},
   journal={J. Reine Angew. Math.},
   volume={638},
   date={2010},
   pages={103--118}
}

\bib{AW3}{article}{
   author={Andersson, Mats},
   author={Wulcan, Elizabeth},
   title={Global effective versions of the Brian\c{c}on-Skoda-Huneke theorem},
   journal={Invent. Math.},
   status={(to appear)},
   eprint={arXiv:1107.0388 [math.CV]},
   url={http://arxiv.org/abs/1107.0388},
}

\bib{Barlet}{article}{
   author={Barlet, Daniel},
   title={Le faisceau $\omega ^{\cdot }_{X}$ sur un espace analytique
   $X$\ de dimension pure},
   conference={
      title={Fonctions de plusieurs variables complexes, III (S\'em. Fran\c cois Norguet, 1975--1977)},
   },
   book={
      series={Lecture Notes in Math.},
      volume={670},
      publisher={Springer},
      place={Berlin},
   },
   date={1978},
   pages={187--204}
}

\bib{BeGe}{article}{
   author={Bern{\v{s}}te{\u\i}n, I. N.},
   author={Gel{\cprime}fand, S. I.},
   title={Meromorphy of the function $P^{\lambda }$},
   journal={Funkcional. Anal. i Prilo\v zen.},
   volume={3},
   date={1969},
   number={1},
   pages={84--85}
}

\bib{BePa}{article}{
   author={Berndtsson, Bo},
   author={Passare, Mikael},
   title={Integral formulas and an explicit version of the fundamental
   principle},
   journal={J. Funct. Anal.},
   volume={84},
   date={1989},
   number={2},
   pages={358--372},
}

\bib{BeYg}{article}{
   author={Berenstein, Carlos A.},
   author={Yger, Alain},
   title={Effective Bezout identities in ${\bf Q}[z_1,\cdots,z_n]$},
   journal={Acta Math.},
   volume={166},
   date={1991},
   number={1-2},
   pages={69--120},
}

\bib{CH}{book}{
   author={Coleff, Nicolas R.},
   author={Herrera, Miguel E.},
   title={Les courants r\'esiduels associ\'es \`a une forme m\'eromorphe},
   series={Lecture Notes in Mathematics},
   volume={633},
   publisher={Springer},
   place={Berlin},
   date={1978}
}

\bib{DS}{article}{
   author={Dickenstein, A.},
   author={Sessa, C.},
   title={Canonical representatives in moderate cohomology},
   journal={Invent. Math.},
   volume={80},
   date={1985},
   number={3},
   pages={417--434}
}

\bib{Eis}{book}{
   author={Eisenbud, David},
   title={Commutative algebra},
   series={Graduate Texts in Mathematics},
   volume={150},
   note={With a view toward algebraic geometry},
   publisher={Springer-Verlag},
   place={New York},
   date={1995},
}

\bib{Fis}{book}{
   author={Fischer, Gerd},
   title={Complex analytic geometry},
   series={Lecture Notes in Mathematics, Vol. 538},
   publisher={Springer-Verlag},
   place={Berlin},
   date={1976},
}

\bib{Herr}{article}{
   author={Herrera, Miguel},
   title={Residues of forms with logarithmic singularities},
   note={Collection of articles dedicated to Alberto Gonz\'alez Dom\'\i
   nguez on his sixty-fifth birthday},
   journal={Rev. Un. Mat. Argentina},
   volume={25},
   date={1970/71},
   pages={379--387},
}

\bib{HP}{article}{
   author={{He}nkin, Gennadi},
   author={{Pa}ssare, Mikael},
   title={Abelian differentials on singular varieties and variations on a
   theorem of Lie-Griffiths},
   journal={Invent. Math.},
   volume={135},
   date={1999},
   number={2},
   pages={297--328}
}

\bib{HePo}{article}{
   author={{He}nkin, Guennadi M.},
   author={{Po}lyakov, Pierre L.},
   title={The Grothendieck-Dolbeault lemma for complete intersections},
   journal={C. R. Acad. Sci. Paris S\'er. I Math.},
   volume={308},
   date={1989},
   number={13},
   pages={405--409},
}

\bib{HL}{article}{
   author={Herrera, M.},
   author={Lieberman, D.},
   title={Residues and principal values on complex spaces},
   journal={Math. Ann.},
   volume={194},
   date={1971},
   pages={259--294}
}

\bib{Lel}{article}{
   author={{Le}long, Pierre},
   title={Int\'egration sur un ensemble analytique complexe},
   journal={Bull. Soc. Math. France},
   volume={85},
   date={1957},
   pages={239--262}
}

\bib{Lar1}{article}{
   author={{L\"a}rk{\"a}ng, Richard},
   title={Residue currents associated with weakly holomorphic functions},
   journal={Ark. Mat.},
   volume={50},
   date={2012},
   number={1},
   pages={135--164},
}

\bib{Lar2}{article}{
   author={{L\"a}rk\"ang, Richard},
   title={On the duality theorem on an analytic variety},
   journal={Math. Ann.},
   volume={355},
   date={2013},
   number={1},
   pages={215--234},
}

\bib{Lar3}{article}{
   author={{L\"a}rk\"ang, Richard},
   title={A comparison formula for residue currents},
   status={Preprint, G\"oteborg},
   date={2012},
   eprint={arXiv:1207.1279 [math.CV]},
   url={http://arxiv.org/abs/1207.1279},
}

\bib{PMScand}{article}{
   author={Passare, Mikael},
   title={Residues, currents, and their relation to ideals of holomorphic
   functions},
   journal={Math. Scand.},
   volume={62},
   date={1988},
   number={1},
   pages={75--152},
}

\bib{PTY}{article}{
   author={Passare, Mikael},
   author={Tsikh, August},
   author={Yger, Alain},
   title={Residue currents of the Bochner-Martinelli type},
   journal={Publ. Mat.},
   volume={44},
   date={2000},
   number={1},
   pages={85--117},
}

\bib{Schw}{article}{
   author={{Sc}hwartz, Laurent},
   title={Division par une fonction holomorphe sur une vari\'et\'e
   analytique complexe},
   journal={Summa Brasil. Math.},
   volume={3},
   date={1955},
   pages={181--209 (1955)},
}

\bib{Sz}{article}{
   author={{Sz}najdman, Jacob},
   title={A Brian\c{c}on-Skoda type result for a non-reduced analytic space},
   status={Preprint, G\"oteborg},
   date={2010},
   eprint={arXiv:1001.0322 [math.CV]},
   url={http://arxiv.org/abs/1001.0322}
}

\bib{Yg}{article}{
   author={Yger, Alain},
   title={The concept of ``residue'' after Poincar\'e: cutting across all of
   mathematics},
   conference={
      title={The scientific legacy of Poincar\'e},
   },
   book={
      series={Hist. Math.},
      volume={36},
      publisher={Amer. Math. Soc.},
      place={Providence, RI},
   },
   date={2010},
   pages={225--241},
}

\end{biblist}

\end{bibdiv}
\end{document}